\newcommand{\NN}{\mathbb{N}}
\newcommand{\RR}{\mathbb{R}}
\newcommand{\boundary}{\partial}
\newcommand{\efe}{\mathcal{F}_\beta}
\numberwithin{equation}{section}
\newtheorem{thm}{Theorem}[section]
\newtheorem{lem}[thm]{Lemma}
\newtheorem{Rema}[thm]{Remark}
\begin{document}

\title[Interior $L^p$ - estimates for elliptic and parabolic]{Interior $L^p$ - estimates for elliptic and parabolic Schr\"{o}dinger type operators and local $A_p$ -weights}

\author{Isolda Cardoso -- Pablo Viola -- Beatriz Viviani}

\address{
  Departamento de Matem\'{a}tica  \\
  Fac. de Cs. Exactas, Ingenier\'{i}a y Agrimensura  \\
 Universidad Nacional de Rosario \\
  Pellegrini 250, 2000 Rosario
 }
 \email{isolda@fceia.unr.edu.ar }

 \address{
 Facultad de Ciencias Exactas  \\
  Universidad Nacional del Centro de la Provincia de Buenos Aires  \\
 Pinto 399, 7000 Tandil
 }
 \email{pviola@exa.unicen.edu.ar}

\address{
  Instituto de Matem\'{a}tica Aplicada del Litoral \\
 CONICET- Universidad Nacional del Litoral \\
 IMAL-CCT CONICET Santa FE \\
 Colectora Ruta Nac. No 168, Paraje El Pozo
3000 Santa Fe
}
\email{vivani@santafe-conicet.gov.ar}

\begin{abstract}

 Let $\Omega$ be a non-empty open proper and connected 
 subset of $\mathbb R^{n}$. 
 Consider the elliptic Schr\"{o}dinger type operator 
 $L_{E}u=$
 $A_{E}u+Vu=$
 $-\Sigma_{ij}a_{ij}(x)$ 
 $u_ {x_i x_j}+Vu$
 in $\Omega$, 
   and the linear parabolic operator 
   $L_{P}u=A_{P}u+Vu=$
   $u_{t}-\Sigma a_{ij}(x,t)u_{x_{i}x_{j}}+Vu$ 
   in $\Omega_{T}=\Omega\times ( 0,T )$, 
   where the coefficients $a_{ij}\in VMO$ and the potential $V$ satisfies a reverse-H\"{o}lder condition. 
   The aim of this paper is to obtain a priori estimates for the operators $L_{E}$ and $L_{P}$ in weighted Sobolev spaces involving the distance to the boundary and weights in a local- $A_{p}$ class.

\

\noindent
{\bf Mathematics Subject Classification (2010):} Primary: 35J10; Secondary: 35B45; 42B35.
\end{abstract}

 \maketitle

\section{Introduction}
\label{Intro}

  Let $\Omega$ be a non-empty open proper and connected subset of $\RR^{n}$. We are going to consider the following two operators: the elliptic Schr\"{o}dinger type operator

\begin{equation*}
L_{E}u=A_{E}u+Vu=-\sum_{ij}a_{ij}(x)u_ {x_i x_j}+Vu
\end{equation*}
in $\Omega$, and the linear parabolic operator

\begin{equation*}
L_{P}u=A_{P}u+Vu=u_{t}-\sum a_{ij}(x'                ,t)u_{x_{i}x_{j}}+Vu
\end{equation*}
 in $\Omega_{T}=\Omega\times ( 0,T )$, with $T>0$, under the following assumptions:
 \begin{itemize}
			\item[(1)] $a_{ij}=a_{ji}$, and
\begin{equation*}
\frac{1}{C} |\xi|^{2} \leq \sum _{ij} a_{ij}(.)\xi_{i}\bar{\xi_{j}}\leq C|\xi|^{2}
\end{equation*}
 for a.e. $x\in \Omega$ or $x=(x',t)\in \Omega_{T}$, respectively;
			\item[(2)] $a_{ij}\in L^{\infty}\cap VMO(\RR^{n})$. Here we have the space of functions of vanishing mean oscillation defined as

\begin{equation*}
VMO(\RR^{n})=\big\{g\in BMO(\RR^{n}): \eta(r)\to 0, r\to 0^{+}\big\},
\end{equation*}
 where
\begin{equation*}
\eta(r)=\sup _{\rho\leq r} \sup _{x\in\RR^{n}}\Bigg(\dfrac{1}{|B_{\rho}(x)|}\int _{B_{\rho}(x)}\big|g(y)-g_{B_{\rho}}\big|dy\Bigg).
\end{equation*}
Here $g_{B_\rho }=|B(\rho(x) )|^{-1}\int_{B_\rho (x)} g(y)\, dy$. The parabolic $VMO(\RR^{n+1})$ is defined in the same way, except this time we take the supremum over the parabolic balls (see section \ref{prelim:parabolicsetting});
			\item[(3)] The potential $V\geq 0$ satisfies a reverse H\"{o}lder condition of order $q$, shortly $V\in RH_{q}$, which means that
\begin{equation}\label{RHq}
  \Big(\dfrac{1}{|B|}\int _{B} V^q dx\Big)^{1/q} \leq \dfrac{1}{|B|}\int _{B} Vdx,
\end{equation}
where the ball $B$ is in $\RR^{n}$.
\end{itemize}

   Sometimes we  will use $A$ for either the operators $A_{E}$ or $A_{P}$, and $\Lambda$ for either the subset $\Omega$ or $\Omega_{T}$.

   When the coefficients $a_{ij}$ are at least uniformly continuous, existence and uniqueness results together with a-priori $W^{2,p}$ estimates are well known (see e.g. \cite{GT}). The theory for operators with discontinuous coefficients, in the sense of $VMO$, goes back to the 90's with the works of Chiarenza-Frasca-Longo in \cite{CFL1} and \cite{CFL2} for  elliptic operators and  Bramanti-Cerutti in \cite{BC} for the parabolic case. Since then, many authors have considered this problem in different situations and contexts.
   The Schr\"{o}dinger operator when $A$ is the Laplacian and the potential $V$ satisfies the reverse-H\"{o}lder condition (3), was studied by Shen in \cite{Sh2} and  related results when $V(x) =|x|$ (Hermite operator) have been proved by Thangavelu in \cite{T}. For the elliptic type Schr\"{o}dinger operator under consideration, a global $W^{2,p}(\RR^{n})$ estimate and the existence and uniqueness results deduced from them were obtained in \cite{BBHV}.
   We are interested in obtaining a priori interior estimates in weighted Sobolev spaces for the operator $L$, where $L$ is either the elliptic Schr\"{o}dinger type operator $L_{E}$ or the parabolic operator $L_{P}$, defined in a non necessarily bounded domain. We follow the strategy adopted in \cite{BBHV}. First we get a weighted version of the a priori estimates obtained in \cite{CFL1} and in \cite{BC} for the principal operator $A_E$ and $ A_P$ respectively. Thanks to these estimates we are reduced to prove a weighted $L^p$ bound on $Vu$ in terms on $Lu$. Then, we give a representation formula for $Vu$ by means of the fundamental solution of a constant coefficient operator of the type $A_0 + V$, for which a global estimate was proved by Dziubanski in \cite{D} for $L_{E}$ and by Kurata in \cite{K} for $L_{P}$. These representation formulas involve suitable integral operators with positive kernel, applied to $Lu$, and their positive conmutators, applied to the second order derivatives of $u$.

   In order to prove that these operators are bounded on  weighted $L^p$, we use local maximal functions, $M_{\text{loc}}f$ (see section \ref{prelim}), defined in a proper open set imbedded in a metric space. This maximal operator and the  classes of weight involved $A_{p,\text{loc}}$ (see below), were first studied by Nowak and Stempak in \cite{NS} when $\Omega=(0,\infty)$ and by Lin and Stempak in \cite{LS} for $\Omega= \RR^{n}\setminus \{0\}$. In a general setting, that is in metric spaces, this maximal operator and the corresponding classes of weights were considered by Harboure, Salinas and Viviani in \cite{HSV} and by Lin, Stempak and Wan in \cite{LSW}.

   We consider the local weights class $A_{p,\text{loc}}$ defined as follows: let $(X,d)$ be a metric space and let $\Lambda$ be a nonempty open proper subset of $X$, if $0< \beta < 1$ we define the family of balls

\begin{equation*}
F_{\beta} =\big\{ B=B(x_{B},r_{B}): x_{B}\in\Gamma,  r_{B}< \beta d(x_{B},\Lambda^{C}) \big\},
\end{equation*}
  where $d(x_{B},\Lambda^{C})$ denotes the distance from the center $x_B$ of the ball $B$ to the complementary set of $\Lambda$.
Given a Borel measure $\mu$ defined on $\Lambda$, for $1< p< \infty$, we define
\begin{equation} \label{apbeta}
\mbox{ $w\in A^{\beta}_{p,\text{loc}}(\Lambda)$\ \  iff \ \ }
 \sup _{B\in\mathcal{F}_{\beta} }\frac{1}{\mu(B)}
 \Big(\int _{B} w d\mu\Big)^{1/p}
 \Big(\int _{B} w^{-p/p'}d\mu\Big)^{1/p'} < \infty.
\end{equation}
We remark that the classes $ A^{\beta}_{p,\text{loc}}(\Lambda)$ are independent of $\beta$, as was shown in \cite{HSV}. In view of this fact, we shall refer to theses weights as
$ A_{p,\text{loc}}(\Lambda)$.\
   We also consider the following weighted Sobolev spaces,
   defined in $\RR^{n}$ and $\RR^{n+1}$, respectively:

\begin{equation*}
W^{2,p}_{\delta ,w}(\Omega )=
   \Big\{u \in L^{1}_{\text{loc}}(\Omega ): \|u\|_{W^{2,p}_{\delta ,w}(\Omega )}=\sum _{|\gamma|\leq 2} \|\delta ^{|\gamma |} D^{\gamma} u\|_{L^{p}_{w}(\Omega )}< \infty\Big\},
\end{equation*}
 and
\begin{equation*}
W^{2,p}_{\delta ,w}(\Omega_{T} \! )
  \!  =   \!
 \Big\{  \! u   \!   \in   \!   
   L^{1}_{\text{loc}}  \!  (\Omega_{T} )
  \!    :   \!
    \|u\|_{W^{2,p}_{\delta ,w}(\Omega_{T} )}
   \!\!  = \!\!\!
   \sum _{|\gamma |\leq 2} 
   \!\!
      \|\delta ^{|\gamma|} D_{x}^{\gamma} u\|_{L^{p}_{w}(\Omega_{T} )} + \|\delta^{2}D_{t}u\|_{L^{p}_{w}(\Omega_{T})}
        \!\! < \!  \infty   \!
      \Big\}   \!   ,
\end{equation*}
where $\delta(x)=\min \{ 1 , d(x,\Lambda^{C})\}$, with either $\Lambda=\Omega$ or $\Omega_{T}$, and $d$ denotes the corresponding distance.

   We will prove the following results:

 \begin{thm}
     \label{thm:principal}
Let $\Omega$ be a nonempty, proper, open and connected subset of $\RR^{n}$. Let
      $p\in(1,q]$ and $w\in A_{p,\text{loc}}(\Omega )$.
     If $u\in W^{2,p}_{\delta ,w}(\Omega )$ is a solution of
     \begin{equation*}Lu = Au+Vu =-\sum_{i,j}a_{ij}u_{x_ix_j}+Vu=f\qquad\text{in $\Omega $},\end{equation*}
     under the assumptions (1), (2) and (3), then
     \begin{equation*}\|u\|_{W^{2,p}_{\delta ,w}(\Omega )} + \|\delta ^2Vu\|_{L^p_{w}(\Omega )}\leq C\big[\|\delta ^2 f\|_{L^p_w(\Omega )}+\|u\|_{L^p_w(\Omega )}\big],\end{equation*}
     where $\delta (x)=\min\{1,d(x,\Omega^{C})\}, \,x\in \RR^{n}$.
  \end{thm}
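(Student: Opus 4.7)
The plan is to follow the two-step strategy sketched in the introduction, mimicking \cite{BBHV}. I restrict attention to the elliptic operator $L=L_E$, since the statement only concerns that case.

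First I would establish a weighted interior analogue of the Chiarenza-Frasca-Longo estimate for the principal part $A$ alone, namely
\[
\|u\|_{W^{2,p}_{\delta, w}(\Omega)} \leq C\bigl[\|\delta^2 A u\|_{L^p_w(\Omega)} + \|u\|_{L^p_w(\Omega)}\bigr].
\]
This is proved by localizing to a Whitney-type ball $B \in \efe$, freezing the VMO coefficients $a_{ij}$ at $x_B$, and using the standard representation formula via the Calder\'{o}n-Zygmund kernel of the resulting constant-coefficient operator. The ensuing singular integrals and VMO-commutators are bounded on $L^p_w(B)$ because $w \in A_{p,\text{loc}}(\Omega)$ restricts to a genuine $A_p$ weight on each $B \in \efe$, a key structural fact established in \cite{HSV,LSW}. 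A Whitney covering and cutoff procedure then globalizes the estimate and inserts the $\delta$-powers with controlled constants.

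Applied to $Au = f - Vu$, the previous inequality reduces the theorem to the single bound
\[
\|\delta^2 V u\|_{L^p_w(\Omega)} \leq C\bigl[\|\delta^2 f\|_{L^p_w(\Omega)} + \|u\|_{L^p_w(\Omega)}\bigr].
\]
For this I would, on each $B \in \efe$ of radius comparable to $\dis$, use the representation derived from the fundamental solution $\Gamma_0$ of the constant-coefficient Schr\"{o}dinger operator $A_0 + V$, where $A_0$ freezes $a_{ij}$ at $x_B$. This yields a pointwise decomposition
\[
V u = T\bigl(\delta^2 L u\bigr) + \sum_{ij} C_{ij}\bigl(\delta^2 \partial_{ij} u\bigr),
\]
where $T$ has a positive kernel controlled by $V\Gamma_0$ and each $C_{ij}$ is a commutator with symbol $a_{ij}(\cdot) - a_{ij}(x_B)$. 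Dziubanski's global estimate \cite{D} provides the pointwise control of $V\Gamma_0$ in terms of the auxiliary function attached to the class $RH_q$, supplying the kernel bounds needed for both terms.

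It remains to bound $T$ and each $C_{ij}$ on $L^p_w(\Omega)$. Here the local maximal operator $\Mmu$ enters: $T$ is pointwise dominated by $\Mmu$ and its fractional variants, and the commutators are handled via a John-Nirenberg type inequality localized to the balls of $\efe$. The hypothesis $p \leq q$ is used precisely where the reverse-H\"{o}lder condition (\ref{RHq}), together with self-improvement, lets the $V$-factor inside the kernel be absorbed into a maximal-function bound valid on $L^p_w$. The commutators contribute a small factor $\eta(r_B)$, which after summing over the Whitney cover can be absorbed into the left-hand side.

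I expect the main obstacle to be the commutator step: producing a decomposition whose VMO smallness constants coexist with the positive-kernel representation of $V\Gamma_0$, while keeping the $\delta^2$-powers balanced so that the local $A_{p,\text{loc}}$ theory genuinely applies on each $B \in \efe$. The interplay between the Schr\"{o}dinger decay of $\Gamma_0$ (not scale-invariant and governed by the auxiliary function of $V$) and the cutoff procedure needed to exploit VMO smallness is the delicate point; once those constants are under control, standard absorption into the left-hand side closes the argument.
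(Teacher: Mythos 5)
Your outline matches the paper's argument in all essential respects: a Whitney covering of $\Omega$ by balls in $\efe$, a weighted Chiarenza--Frasca--Longo bound for $A$ via coefficient freezing and singular integrals on each ball, a weighted bound $\|Vu\|\leq C\|Lu\|$ obtained from Dziubanski's estimate on the fundamental solution of $A_0+V$ together with the local maximal function and a VMO-commutator argument whose smallness comes from shrinking $r_0$, and a final absorption of the $\|\delta\,Du\|$ term via the weighted interpolation inequality. The only differences are cosmetic (the paper combines the $D^2$ and $Vu$ estimates locally before summing, and proves the kernel bounds by passing to adjoints), so the proposal is consistent with the proof given.
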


   The parabolic version of this theorem goes as follows:

 \begin{thm}
     \label{thm:principalP}
Let $\Omega$ be a nonempty, proper, open and connected subset of $\RR^{n}$. For $T>0$ define $\Omega_{T}=\Omega\times \big(0,T\big)$. Let
      $p\in(1,q]$ and $w\in A_{p,\text{loc}}(\Omega_{T} )$.
     If $u\in W^{2,p}_{\delta ,w}(\Omega_{T})$ is a solution of
     \begin{equation*}Lu = Au+Vu =u_{t}-\sum_{i,j}a_{ij}u_{x_i x_j}+Vu=f\qquad\text{in $\Omega_{T} $},\end{equation*}
     under the assumptions (1), (2) and (3), then
     \begin{equation*}\|u\|_{W^{2,p}_{\delta ,w}(\Omega_{T} )} + \|\delta ^2Vu\|_{L^p_{w}(\Omega_{T} )}\leq C\big[\|\delta ^2 f\|_{L^p_w(\Omega_{T} )}+\|u\|_{L^p_w(\Omega_{T} )}\big],\end{equation*}
     where $\delta (x',t)=\min\{1,d((x',t),\Omega^{C}_{T} )\}$ .
  \end{thm}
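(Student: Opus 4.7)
The plan is to mimic, in the parabolic setting, the strategy already deployed for the elliptic Theorem \ref{thm:principal}, with the parabolic metric replacing the Euclidean one throughout. Since the parabolic operator $A_P$ behaves, in the scaling $\delta^2$ sense, exactly like $A_E$ up to the $u_t$ term (compensated by the inclusion of $\|\delta^2 D_t u\|_{L^p_w}$ in the Sobolev norm), the scheme transfers step by step.

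First I would invoke the weighted a priori estimate for the principal parabolic operator $A_P$ (the weighted version of the Bramanti--Cerutti estimate in \cite{BC}, already established in this paper): for $w\in A_{p,\text{loc}}(\Omega_T)$,
\begin{equation*}
 \|u\|_{W^{2,p}_{\delta,w}(\Omega_T)} \leq C\bigl[\|\delta^2 A_P u\|_{L^p_w(\Omega_T)} + \|u\|_{L^p_w(\Omega_T)}\bigr].
\end{equation*}
Writing $A_P u = f - Vu$ and using the triangle inequality reduces the theorem to the weighted bound
\begin{equation*}
 \|\delta^2 Vu\|_{L^p_w(\Omega_T)} \leq C\bigl[\|\delta^2 f\|_{L^p_w(\Omega_T)} + \|u\|_{L^p_w(\Omega_T)}\bigr] + \epsilon\,\|u\|_{W^{2,p}_{\delta,w}(\Omega_T)},
\end{equation*}
where the last term is absorbable in the left-hand side of the a priori estimate.

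To obtain this bound, I would choose a Whitney-type covering of $\Omega_T$ by parabolic balls $B_k \in \efe$, work on a fixed $B_k$, freeze the coefficients $a_{ij}$ at its center $(x_0,t_0)$, and consider the constant coefficient parabolic Schr\"odinger operator $A_0 + V$. Its fundamental solution $\Gamma_0$ enjoys the Gaussian-type upper bounds proved by Kurata \cite{K} in terms of the auxiliary function associated to the reverse-H\"older potential $V$. Multiplying $L u = f$ by $\Gamma_0$ and integrating by parts on (a slightly enlarged) $B_k$ yields, after the usual manipulations, a pointwise representation formula of the form
\begin{equation*}
 (\delta^2 V u)(x,t) = T_0(\delta^2 f)(x,t) + \sum_{i,j}\bigl[T_{ij},a_{ij}(\cdot)-a_{ij}(x_0,t_0)\bigr](\delta^2 u_{x_i x_j})(x,t) + R(u)(x,t),
\end{equation*}
where $T_0$ and $T_{ij}$ are integral operators with positive kernels (controlled by $\Gamma_0$ and its derivatives) and $R$ collects lower-order terms coming from derivatives of the cutoff.

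The concluding step is to show that $T_0$ and the commutators $[T_{ij},a_{ij}]$ are bounded on $L^p_w(\Omega_T)$ for $w\in A_{p,\text{loc}}(\Omega_T)$, with the commutator constant small when the parabolic VMO modulus $\eta(r)$ of $a_{ij}$ is evaluated on balls of small enough radius. This is where the local maximal operator $\Mmu$ and the local $A_p$ theory from \cite{HSV}, \cite{LSW} come in: one proves a pointwise domination of $T_0 g$ and of the commutators by $\Mmu$-type expressions and then uses the $L^p_w$ boundedness of $\Mmu$ for $w\in A_{p,\text{loc}}$. The main obstacle, as in the elliptic case, is the commutator estimate: one needs a Coifman--Rochberg--Weiss-type bound for $[T_{ij},b]$ valid with the \emph{local} $A_p$ weight and on the \emph{parabolic} metric, with constant proportional to the parabolic VMO seminorm of $b$; choosing the Whitney balls of sufficiently small relative radius ($\beta$ small) then makes this constant small enough to absorb the $\|\delta^2 u_{x_i x_j}\|_{L^p_w}$ term back into the left-hand side of the global estimate, finishing the proof.
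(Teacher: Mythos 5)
Your scheme --- Whitney covering of $\Omega_T$ by parabolic balls, coefficient-freezing, representation via Kurata's bounds on the fundamental solution of the frozen operator, boundedness of the resulting positive-kernel operators and commutators in weighted $L^p$ via the local maximal function and local $A_p$ theory, and VMO smallness to absorb the second-order commutator term --- is the paper's scheme (Theorems \ref{thm:bound.D2uP}, \ref{thm:potencialP}, \ref{thm:Sk.sin.conmutadorP}, \ref{thm:Ska.con.conmutadorP}). However, there is a genuine gap in how you close the argument.

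Your first step invokes a global $W^{2,p}_{\delta,w}(\Omega_T)$ estimate for $A_P$ as ``already established'', but the paper only proves the \emph{local} estimate $\|u_{x_ix_j}\|_{L^p_w(B_0)} + \|u_t\|_{L^p_w(B_0)} \leq C\|A_P u\|_{L^p_w(B_0)}$ for $u \in W^{2,p}_0(B_0)$ with $10B_0 \in \mathcal{F}_\beta$ (Theorem \ref{thm:bound.D2uP}); the globalization to the $\delta$-weighted Sobolev norm is precisely part of what Theorem \ref{thm:principalP} establishes and cannot be assumed. More seriously, you dispose of the cutoff-derivative contributions in an unexamined remainder $R(u)$ and only discuss absorbing the second-order term $\|\delta^2 u_{x_ix_j}\|_{L^p_w}$ via the VMO smallness of the commutator. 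But applying the local estimates to $u\eta_i$ produces first-order terms of size $r_i^{-1}\|D_x u\|_{L^p_w(2B_i)}$, which after multiplication by $\delta^2 \approx r_i^2$ and summation over the covering give $\|\delta D_x u\|_{L^p_w(\Omega_T)}$; this term is not small and the VMO mechanism does nothing for it. It must be handled by the parabolic weighted interpolation inequality (Theorem \ref{thm:bound.epsilonP}),
\begin{equation*}
  \|\delta D_x u\|_{L^p_w(\Omega_T)} \leq C\bigl(\epsilon^{-1}\|u\|_{L^p_w(\Omega_T)} + \epsilon\|\delta^2 D^2_x u\|_{L^p_w(\Omega_T)}\bigr),
\end{equation*}
after which only the $\epsilon$-part is absorbed into the left-hand side. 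That inequality requires its own proof in the parabolic/local-$A_p$ setting --- via Sobolev's integral representation, the Whitney covering, and the $L^p_w$-boundedness of $M_{\beta,\text{loc}}$ for $A_{p,\text{loc}}(\Omega_T)$ weights --- and your proposal omits it entirely. Finally, a small correction: the smallness of the commutator constant comes from shrinking $r_0$ (the Whitney parameter) for fixed $\beta$, not from taking $\beta$ small; the classes $A^\beta_{p,\text{loc}}$ are $\beta$-independent.
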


We note that, as it is easy to check, $w(x)= \delta^\alpha(x)$ belongs to
 $A_{p,\text{loc}}$ for any exponent $\alpha\in\mathbb{R}$. Therefore the data function $f$ appearing on the right hand side of Theorem \ref{thm:principal} and Theorem \ref{thm:principalP}  could increase polynomially when approaching the boundary of $\Omega$ or $\Omega_{T}$ and still we might have some control for the derivatives of the solution up to the order $2$.

   The paper is organized as follows: in Section \ref{prelim} we put together the preliminary definitions and results, and prove some useful lemmas; in Section \ref{previous} we prove some results that will build the proof of the Main Theorem for the operator $L_{E}$,
   and in Section \ref{previousP} we show similar results for the operator $L_{P}$. Finally, in  {Section \ref{mains}} we end up proving the main results stated above: Theorems \ref{thm:principal}  and  \ref{thm:principalP}.

\subsection*{Acknowledgments}

 The first author is partially supported by
                      Universidad Nacional de Rosario and 
                      a grant from Consejo Nacional
                      de Investigaciones Cient\'{i}ficas y 
                      T\'{e}cnicas (CONICET).
 The second author is partially supported 
                     by N\'{u}cleo Consolidado de 
                     Matem\'{a}tica Pura y Aplicada, 
                     Universidad Nacional del Centro 
                     de la Provincia de Buenos Aires
                      and by CONICET.
The third author is partially 
                 supported by grants from CONICET
                 and Universidad Nacional del Litoral.

\section{Preliminaries}
\label{prelim}

\subsection{Definition and notations}
\label{prelim:definitions}

\subsubsection{The parabolic setting}
\label{prelim:parabolicsetting}

   The parabolic setting we are considering consists of $\RR^{n+1}$ endowed with the following parabolic metric
\begin{equation*}
d(x,y)=(|x'-y'|^{2}+|t-s|)^{\frac12},
\end{equation*}
 where we write $x=(x',t), y=(y',s)\in\RR^{n+1}$, with $x',y'\in\RR^{n}$ and $t,s\in\RR^+$. We denote the parabolic balls as usual:
\begin{equation*}
B(x,r)=\{ y\in\RR^{n+1}: d(x,y)<r\}.
\end{equation*}
and its Lebesgue measure by $|B(x,r)|= c_n r^{n+2}$.

\subsubsection{The local maximal operator}
\label{prelim:localmaximal}
In this subsection we will denote by $X$ a metric space satisfying the weak homogeneity property, that is, there exists a fix number $N$ such that for any ball $B(x,r)$ there are no more than $N$ points in the ball whose distance from each other is greater than $r/2$. Also $\Lambda$ will mean any open proper and non empty subset of $X$ such that all balls contained in $\Lambda$ are connected sets and $\mu$ will be a Borel measure defined on $\Lambda$ which satisfies a doubling condition on $F_{\beta}$, that is, there is some constant $C_\beta$ such that for any ball $B\in F_{\beta} $
\begin{equation*}
\mu(B)\leq C_\beta \mu(\tfrac{1}{2} B)
\end{equation*}
with $0< \mu(B)<\infty$ for any ball $B\in\mathcal{F}=\bigcup_{0<\alpha<1}\mathcal{F}_\alpha$.

   We shall use the following  local maximal operator associated to $\mathcal{F}_{\beta}$: given $0<\beta<1$ and $\mu$ as above
\begin{equation}\label{maxi}
M_{\mu,\beta}f(x)=\sup_{x\in B\in \mathcal{F}_{\beta}}\frac{1}{\mu(B)}\int_B|f|\; d\mu
\end{equation}
 for any $f\in L^1_{\text{loc}}(\Lambda, d\mu)$ and $x\in \Lambda$. When $\mu$ is the Lebesgue measure we denote $M_{\mu,\beta}f$ by $M_{\beta,\text{loc}}f$.
 
 The boundness property for $M_{\mu,\beta}f$ is contained in the next Theorem:

\begin{thm}[\cite{HSV}, Theorem 1.1]
\label{thm:acot.mxml.loc} Let $X$ and $\Lambda$ as above. Let $0<\beta<1$ and $\mu$ a Borel measure satisfying the doubling property on $\mathcal{F}_{\beta}$.
Then, for $1<p<\infty$, $M_{\mu,\beta}f$ is bounded on $L^{p}_{w}(\Lambda,w d\mu)$ if and only if $w\in A^{\beta}_{p,\text{loc}}(\Lambda)$.
\end{thm}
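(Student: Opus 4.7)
The plan is to prove both implications through standard techniques from $A_p$-theory, adapted to the local family $\mathcal{F}_\beta$. The main difficulty, absent in the classical global setting, is that $\mathcal{F}_\beta$ is not preserved under the dilations one typically needs in a Vitali-type covering argument; the $\beta$-independence of the class $A_{p,\text{loc}}$ (recorded in the paper from \cite{HSV}) is precisely what allows one to circumvent this.

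For the necessity, I would fix a ball $B \in \mathcal{F}_\beta$ and test $M_{\mu,\beta}$ on $f = w^{-1/(p-1)}\chi_B$ (truncating $w$ from above and below on $B$ to avoid finiteness issues, then passing to the limit). For every $x \in B$, the ball $B$ itself witnesses
\[
M_{\mu,\beta} f(x) \;\geq\; \mu(B)^{-1}\int_B w^{-1/(p-1)}\, d\mu.
\]
Inserting this pointwise lower bound into the hypothesis $\|M_{\mu,\beta} f\|_{L^p_w}^p \leq C\|f\|_{L^p_w}^p$, restricting the integration on the left to $B$, and rearranging yields exactly the $A_p^\beta$ condition \eqref{apbeta}.

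For the sufficiency I would first establish weak-type $(p,p)$ and then promote it to the strong-type inequality. Given $\lambda > 0$, for each $x$ in the level set $E_\lambda = \{M_{\mu,\beta}f > \lambda\}$ select $B_x \in \mathcal{F}_\beta$ with $x \in B_x$ and $\mu(B_x)^{-1}\int_{B_x}|f|\, d\mu > \lambda$. A Vitali-type selection (possible because the collection of radii is bounded for each $x$ and because $\mu$ is doubling on $\mathcal{F}_\beta$) extracts pairwise disjoint $\{B_j\}$ with $E_\lambda \subset \bigcup_j 5B_j$. The key geometric observation is that if one initially chooses $\beta$ small (which one may, by the $\beta$-independence of $A_{p,\text{loc}}$), then each dilate $5B_j$ still belongs to $\mathcal{F}_{\beta'}$ for some $\beta' < 1$, since $5 r_{B_j} < 5\beta\, d(x_{B_j},\Lambda^c) < d(x_{B_j},\Lambda^c)$. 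Hence the $A_p^{\beta'}$ condition and the doubling of $\mu$ and $w$ are available on all the enlarged balls that appear.

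With the covering in hand, the weak-type estimate follows from the standard chain: doubling of $w$ gives $w(5B_j) \leq Cw(B_j)$, H\"older's inequality together with the $A_p^{\beta'}$ condition gives
\[
w(B_j) \;\leq\; C\lambda^{-p}\int_{B_j}|f|^p w\, d\mu,
\]
and summing over the disjoint $B_j$ produces $w(E_\lambda) \leq C\lambda^{-p}\|f\|_{L^p_w}^p$. To pass from weak-type to strong-type, one either interpolates (Marcinkiewicz) against the trivial $L^\infty$ bound, or uses the self-improvement $A_{p,\text{loc}}^\beta \subset A_{p-\varepsilon,\text{loc}}^\beta$ for some $\varepsilon > 0$, a local reverse H\"older inequality provable by the usual Calder\'on--Zygmund argument restricted to $\mathcal{F}_\beta$ thanks to the doubling of $w$. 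The step I expect to require the most care is the Vitali-covering/dilation compatibility with $\mathcal{F}_\beta$: every dilation of a ball must be located inside a local family on which the $A_p$ and doubling hypotheses still apply, and it is the $\beta$-independence of $A_{p,\text{loc}}$ that makes this bookkeeping work.
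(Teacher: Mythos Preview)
The paper does not prove this theorem. It is quoted verbatim from \cite{HSV} (Harboure--Salinas--Viviani), carries the attribution ``[HSV], Theorem 1.1'' in its heading, and is used as a black box throughout the rest of the paper. There is therefore no proof in the paper to compare your proposal against.

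That said, your sketch is the correct outline for how the result in \cite{HSV} is obtained, and you have identified the genuine new obstacle: the local family $\mathcal{F}_\beta$ is not closed under the dilations needed for Vitali-type coverings, and the $\beta$-independence of the class $A_{p,\text{loc}}$ is exactly what rescues the argument. Two points deserve more care than you give them. First, the claim that $w$ is doubling on balls of $\mathcal{F}_{\beta'}$ is not an assumption but a consequence of the $A_p^{\beta'}$ condition together with the doubling of $\mu$; you should say how this is derived (it is the standard argument, but it must be localized). Second, your two routes from weak to strong type are not equally easy here: Marcinkiewicz against the trivial $L^\infty$ bound gives only unweighted strong $(p,p)$, not weighted, so you would still need the self-improvement $A_{p,\text{loc}}\subset A_{p-\varepsilon,\text{loc}}$ to interpolate between weak $(p-\varepsilon,p-\varepsilon)$ with weight and $L^\infty$. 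The local reverse H\"older inequality underlying that self-improvement is the step that requires the most work, and you have only asserted it.
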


\subsubsection{The properties of the potential $V$}
\label{prelim:propertiesofV}

   The potential $V$ satisfies assumption (3) and, as it is remarked in \cite{BBHV}, the condition $V\in RH_{q}$ implies that for some $\epsilon >0$ we have also that $V\in RH_{q+\epsilon}$, where the $RH_{q+\epsilon}$ constant of $V$ is controlled in terms of the $RH_{q}$ constant of $V$. They also remark the useful fact that the measure $V(y)dy$ is doubling.

   Associated to the function $V\in RH_{q}$ there is a function $\rho(x)$, called \textit{critical radious}, defined by Shen in \cite{Sh2}:
  \begin{equation}\label{rho}
      \rho(x)=  \sup\bigg\{ r>0: {\dfrac{r^{2}}{|B(x,r)|}}\int _{B(x,r)} V(y)dy\leq 1\bigg\},
  \end{equation}
  which, under our assumptions on $V$, is finite almost everywhere.
  We note that by definition of $\rho$, we have that
\begin{equation}
\label{14} {1\over{\rho(x)^{n-2}}} \int _{B(x,\rho(x))}V(y)dy\leq 1.
\end{equation}

Shen also proved that the following inequalities hold:
\begin{align}
\label{rhoxrhoy} & C\Big( 1+ \frac{|x-y|}{\rho(y)}\Big)^{1\over{k_{0}}}\leq  1+ \frac{|x-y|}{\rho(x)} \leq C \Big( 1+ \frac{|x-y|}{\rho(y)}\Big)^{1\over{k_{0}}},
\end{align}
for some $k_{0}\in\NN$ and any $x,y\in\RR^{n}$ and
  \begin{equation} \label{13}
  {1\over{r^{n}}}\int _{B(x,r)}V(y)dy\leq C \bigg( {R\over r} \bigg)^{n\over q} {1\over{R^{n}}} \int _{B(x,R)} V(y),
  \end{equation}
  for any $0<r<R<\infty$.

\subsubsection{Bounds for the fundamental solutions of the constant coefficient operators $L_{0}$}
\label{prelim:fundamentalsolutions}

   Let us now consider the operator $A$,
   which denotes either $A_{E}$ or $A_{P}$.
   For fixed $x_{0}\in\Lambda$, where $\Lambda$
   denotes $\Omega$ or $\Omega_{T}$, respectively,
    freeze the coefficients $a_{ij}(x_{0})$
    and denote $L_{0}$ the operator $L$ with these constant coefficients.

   Dziubanski in \cite{D} proved that the elliptic operator $L_{0}$ has a fundamental solution $\Gamma(x_{0};x,y)$ which satisfies that for any $k\in\NN$ there exists a constant
    $c_{k}$ (independent of $x_{0}$) such that
\begin{align}
\label{ellipticfundsolbound}
\Gamma(x_{0};x,y) 
   &  \leq  c_{k}{1\over \big({1+{|x-y|\over{\rho(x)}}}\big)^{k}} {1\over{|x-y|^{n-2}}},
\end{align}
for any $x,y\in\RR^{n}$, $x\neq y$. Here $\rho$ is the critical radious associated to $V$ defined in \ref{rho}. We remark that the kernel

\begin{equation*}
W(x,y) = V(y) {1\over {\big( 1+{{|x-y|}\over{\rho(x)}} \big)^{k} }} {1\over{|x-y|^{n-2}}},
\end{equation*}
 satisfies \textit{H\"{o}rmander's condition of order $q$}, briefly \textit{condition $H_{1}(q)$}, in the first variable
(see Proposition 12 in \cite{BBHV}).
This means that there exists a constant $C>0$ such that for any $r>0$ and any $x,x_{0}\in\RR^{n}$ with $|x-x_{0}|<r$, the following inequality holds:
\begin{equation}\label{H1qcondition}
  \sum _{j=1}^{\infty} j |B(x_0, 2^jr)|^\frac{1}{q'}
    \Big( \int _{2^{j}r\leq |x_{0}-y|\leq 2^{j+1}r} |W(x,y)-W(x_{0},y)|^{q} dy \Big)^{1\over q} \leq C.
\end{equation}
Also, observe that from inequalities \ref{rhoxrhoy} we can replace $\rho(y)$ with $\rho(x)$ in the kernel $W$, possibly changing the integer $k$.

   For the parabolic operator $L_{0}$, Kurata showed in Corollary 1 of \cite{K} that it has a fundamental solution $\Gamma(x_{0};x,y)$ which satisfies that for each $k\in\NN$ there exists constants $c_{k}$ and $c_{0}$ (independents of $x_{0}$) such that

\begin{equation*}
\Gamma(x_{0};x,y) \leq  c_{k} {1\over{\big(1+{{d(x,y)}\over{\rho(x')}} \big)^{k}}} {1\over{|t-s|^{n/2}}} e^{-c_{0}{{|x'-y'|^{2}}\over{|t-s|}}},
\end{equation*}
where $d$ is the parabolic distance given in \ref{prelim:parabolicsetting}. Thus,
\begin{align}
\label{parabolicfundsolbound}
\Gamma(x_{0};x,y) & \leq c_{k} {1\over{\big(1+{{d(x,y)}\over{\rho(x')}} \big)^{k}}} {1\over{d(x,y)^{n}}}.
\end{align}
   The parabolic kernel, appearing on the right hand side
   {of \ref{parabolicfundsolbound}}, also satisfies condition $H_{1}(q)$, as we prove in the next subsection.

\subsection{Previous Lemmas}
\label{prelim:lemmas}

\begin{lem}
\label{lemm:parabolickernelh1q}
The kernel
\begin{equation*}
W(x,y)= V(y') {1\over{ \big( 1+ {{d(x,y)}\over{\rho(y')}} \big)^{k}}} {1\over{d(x,y)^{n}}}
\end{equation*}
 satisfies condition $H_{1}(q)$ for $k$ large enough, that is, there exists a constant $C>0$ such that for every $r>0$, $x,x_{0}\in\RR^{n+1}$ with $d(x,x_{0})<r$,
\begin{equation*}
\sum _{j=1}^{\infty} j (2^{j}r)^{{n+2}\over{q'}} \Big( \int _{2^{j}r<d(x_{0},y)\leq 2^{j+1}r} |W(x,y)-W(x_{0},y)|^{q}dy \Big)^{1\over q}\leq C.
\end{equation*}

\end{lem}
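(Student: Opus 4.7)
The plan is to follow the strategy used for the elliptic analogue in Proposition~12 of \cite{BBHV}, adapted to the parabolic distance. Write $W(x,y) = V(y')\Phi_{y'}(d(x,y))$ with $\Phi_{y'}(s) = s^{-n}(1 + s/\rho(y'))^{-k}$, noting that $x$ enters only through the parabolic distance $d(x,y)$. The argument proceeds through a pointwise estimate on $|W(x,y) - W(x_0,y)|$, followed by integration against $V(y')^q$ on each dyadic parabolic annulus $A_j := \{y : 2^j r < d(x_0,y) \leq 2^{j+1}r\}$, and finally summation over $j$.

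For the pointwise estimate, I observe that for $y \in A_j$ and $d(x,x_0) < r \leq 2^j r$, the parabolic triangle inequality gives $d(x,y) \asymp d(x_0,y) \asymp 2^j r$. A straightforward mean-value argument on $s \mapsto \Phi_{y'}(s)$, differentiating both factors $s^{-n}$ and $(1 + s/\rho(y'))^{-k}$ separately and combining the two resulting terms, yields
\begin{equation*}
|\Phi_{y'}(d(x,y)) - \Phi_{y'}(d(x_0,y))| \lesssim \frac{d(x,x_0)}{(2^j r)^{n+1}(1 + 2^j r/\rho(y'))^{k-1}}.
\end{equation*}
Inequality \ref{rhoxrhoy}, applied in the spatial coordinates, lets me trade $\rho(y')$ for $\rho(x_0')$ at the cost of weakening the exponent; this is harmless since $k$ is a free parameter to be chosen large at the end.

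For the integration, the cross-section of $A_j$ at fixed time sits inside the spatial ball $B'(x_0', 2^{j+1}r)$, and the time extent is at most $2(2^{j+1}r)^2$. Fubini together with the $RH_q$ condition \ref{RHq} applied on this spatial ball controls $(\int_{A_j} V(y')^q \, dy)^{1/q}$ in terms of $(2^j r)^{2/q} |B'(x_0', 2^{j+1}r)|^{-1/q'} \int_{B'(x_0', 2^{j+1}r)} V$. The growth of the spatial integral $\int_{B'(x_0', R)} V$ is controlled by \ref{14}, \ref{13} and the doubling of $V\,dy'$ by a polynomial in $1 + R/\rho(x_0')$, exactly as in the elliptic case of \cite{BBHV}. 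Combining the pointwise bound with this $L^q$ bound, multiplying by the required prefactor $(2^j r)^{(n+2)/q'}$ and using $d(x,x_0) < r \leq 2^j r$, the cancellation of the powers of $2^j r$ leaves each term bounded by $C\,2^{-j}$ times a polynomial in $1 + 2^j r/\rho(x_0')$ of exponent $N - (k-1)/k_0$. Choosing $k$ so large that $(k-1)/k_0 > N$ produces geometric decay in $j$, which absorbs the linear factor $j$ and yields a convergent series bounded uniformly in $r$, $x$, $x_0$. The main difficulty is not any single estimate but the bookkeeping: tracking the loss of exponent at each application of \ref{rhoxrhoy} and checking that the polynomial growth of $\int_{B'} V$ is dominated by the decay factor $(1 + R/\rho)^{-k}$ for $k$ large; both are routine once $k$ is left free until the end of the argument.
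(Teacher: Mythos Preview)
Your proposal is correct and follows essentially the same route as the paper: a mean-value estimate on the two factors of the kernel giving the pointwise bound $V(y')\,d(x,x_0)/[(2^jr)^{n+1}(1+2^jr/\rho)^{k}]$ (the paper obtains exponent $k$ rather than $k-1$, but this is immaterial since $k$ is free), then Fubini on the parabolic annulus contained in the rectangle $B'(x_0',2^{j+1}r)\times I_{j+1}$ together with $RH_q$ on the spatial ball, and finally control of $\int_{B'}V$ via \eqref{13}, \eqref{14} and doubling. The only cosmetic difference is that the paper splits the sum explicitly into the ranges $2^jr<\rho(x')$ and $2^jr\ge\rho(x')$ and handles each separately, whereas you package both regimes into a single polynomial bound $(1+R/\rho)^N$ and then kill it with the decay factor; the two organizations are equivalent.
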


\begin{proof}
We follow the lines of Proposition 12 of \cite{BBHV}.
 As usual, we may assume $q>{n\over 2}$. Let $x,x_{0},y\in\Omega_{T}$ be such that $d(x,x_{0})\leq r$ and $d(y,x_{0})\ge 2r$, so that in particular $d(x_{0},y)\simeq d(x,y)$.

   The first step is to compute
\begin{align*}
|W(x,y) -  &  W(x_{0},y)| 
  \leq  V(y') \Bigg( {1\over{ \big( 1 + {{d(x_{0},y)}\over{\rho(y')}}\big)^{k} }} \bigg| {1\over{d(x,y)^{n}}}-{1\over{d(x_{0},y)^{n}}} \bigg| +  \\
&   + {1\over{d(x,y)^{n}}} \bigg| {1\over{ \big( 1 + {{d(x,y)}\over{\rho(y')}}\big)^{k} }}-{1\over{ \big( 1 + {{d(x_{0},y)}\over{\rho(y')}}\big)^{k} }} \bigg| \Bigg) = A+B.
\end{align*}

   We note that by the mean value Theorem

\begin{equation*}
\bigg| {1\over{d(x,y)^{n}}}-{1\over{d(x_{0},y)^{n}}} \bigg| \leq C {{d(x,x_0)}\over{d(x_{0},y)^{n+1}}},
\end{equation*}

   Also

\begin{align*}
\bigg| {1\over{ \big( 1 + {{d(x,y)}\over{\rho(y')}}\big)^{k} }}-{1\over{ \big( 1 + {{d(x_{0},y)}\over{\rho(y')}}\big)^{k} }} \bigg|
 &  \leq C {k\over{\rho(y')}} {{d(x,x_{0})}\over{ \big( 1+{{d(x_{0},y)}\over{\rho(y')}} \big)^{k+1}}}\\
 &  \leq C d(x_{0},y)^{-1} {{d(x,x_{0})}\over{ \big( 1+{{d(x_{0},y)}\over{\rho(y')}} \big)^{k}}},
\end{align*}
 which we obtain from applying again the mean value Theorem.

   Thus, by using the fact that $d(x_{0},y)\simeq d(x,y)$, we obtain that $A$ and $B$ are {bounded} by
\begin{align*}
C V(y') {1\over{ \big( 1 + {{d(x_{0},y)}\over{\rho(y')}}\big)^{k} }}  {{d(x,x_{0})}\over{d(x_{0},y)^{n+1}}}.
\end{align*}

   The second step is to consider the balls
    $B_{j}=B(x_{0},2^{j}r)$,
    the annuli 
    $C_{j}=\{y:2^{j}r<d(y,x_{0})\leq 2^{j+1}r\}=\overline{B_{j+1}}\backslash\overline{B_{j}}$ and the rectangles $B'_{j}\times I_{j}$,
    where
 $B'_{j}=\{y'\in\RR^{n}:|y'- x_0' |\leq 2^{j}r \}$ and $I_{j}=\{s\in\RR:|s-t_0|\leq (2^{j}r)^2\}$. Thus, $C_{j}\subset B'_{j+1}\times I_{j+1}$.\\

  In view of \ref{rhoxrhoy} replacing $\rho(y')$ with $ \rho(x')$ (possibly with a change of the integer $k$), we have that
\begin{align*}
\Big( \int _{C_{j}} A^{q} dy \Big)^{1\over q}
   &  \leq  C {1\over{\big( 1 + {{2^{j}r}\over{\rho(x')}}\big)^{k}}} {r\over{(2^{j}r)^{n+1}}}
           \Big(  \int _{C_{j}} V(y')^{q} dy \Big)^{1\over q}  \\
  & \leq  C {1\over{\big( 1 + {{2^{j}r}\over{\rho(x')}}\big)^{k}}} {r\over{(2^{j}r)^{n+1}}}
      \Big( \int _{I_{j+1}} ds \int _{B'_{j+1}} V^{q}(y') dy' \Big)^{1\over q}   \\
  & \leq  C {1\over{\big( 1 + {{2^{j}r}\over{\rho(x')}}\big)^{k}}} {r\over{(2^{j}r)^{n+1}}} (2^{j+1}r)^{{n+2}\over q} \big( {1\over{|B'_{j+1}|}} \int _{B'_{j+1}} V^{q}(y') dy' \big)^{1\over q}   \\
  &\leq  C {1\over{\big( 1 + {{2^{j}r}\over{\rho(x')}}\big)^{k}}} {r\over{(2^{j}r)^{n+1}}} (2^{j}r)^{{n+2}\over q} {1\over{(2^{j}r)^{n}}} \int _{B'_{j+1}} V(y') dy',
\end{align*}
where in the last inequality we used the reverse H\"{o}lder condition on the potential $V$.

   The third step is to add up and split, as follows:
\begin{align*}
 \sum _{j=0}^{\infty} j (2^{j}r)^{{n+2}\over{q'}}
   & 
    \Big(\int _{C_{j}} A^{q} dy \Big)^{1\over q}\\
   &    \leq   C \sum _{j=0}^{\infty} j (2^{j}r)^{n+2} {1\over{\big( 1 + {{2^{j}r}\over{\rho(x')}} \big)^{k}}} {r\over{(2^{j}r)^{n+1}}} {1\over{(2^{j}r)^{n}}} \int _{B'_{j+1}} V(y')dy' \\
   &  \leq  C \sum _{j:2^{j}r<\rho(x')} \big( \dots \big)
              + C \sum _{j:2^{j}r\ge\rho(x')} \big( \dots \big)  = A_{I}+A_{II}.
\end{align*}

   Therefore,
\begin{align*}
A_{I}
 &  \le
       C \sum _{j:2^{j}r<\rho(x')} j (2^{j}r)^{n+2} {r\over{(2^{j}r)^{n+1}}} {1\over{(2^{j+1}r)^{n}}} \int _{B'_{j+1}} V(y')dy'  \\
 &  \leq   C \sum _{j:2^{j}r<\rho(x')} j (2^{j}r)^{n+2} {r\over{(2^{j}r)^{n+1}}} \Big( {{\rho(x')}\over{2^{j}r}} \Big)^{n\over q} {1\over{\rho(x')^{n}}} \int _{B(x',\rho(x'))} V(y')dy',
\end{align*}
because of equation \ref{13}. Finally, by definition of $\rho$ (see \ref{rho}) and since $q>{n\over 2}$ we conclude that $A_{I}$ is finite:
\begin{align*}
A_{I}  & \leq  C \sum _{j:2^{j}r<\rho(x')} {j\over{2^{j}}}  \Big( {{\rho(x')}\over{2^{j}r}} \Big)^{{n\over q}-2}
\leq  C \sum _{j:2^{j}r<\rho(x')} {j\over{2^{j}}}.
\end{align*}
   Similarly, by using the doubling property of the measure $V(y')dy'$, equation \ref{13} and definition of $\rho$, we have that
\begin{align*}
A_{II}  &\leq C  \sum _{j:2^{j}r\ge\rho(x')} {j\over 2^{j}} (2^{j}r)^{2} \Big({{\rho(x')}\over{2^{j}r}} \Big)^{k}  {1\over{(2^{j}r)^{n}}} \int _{B'_{j+1}} V(y')dy'  \\
 &\leq  C  \sum _{j:2^{j}r\ge\rho(x')} {j\over 2^{j}} (2^{j}r)^{2} \Big({{\rho(x')}\over{2^{j}r}} \Big)^{k}  {1\over{(2^{j}r)^{n}}} \Big({{2^{j}r}\over{\rho(x')}} \Big)^{\alpha} \int _{B(x',\rho(x'))} V(y')dy'   \\
&\leq  C \sum _{j:2^{j}r\ge\rho(x')} {j\over 2^{j}} \Big({{\rho(x')}\over{2^{j}r}} \Big)^{k-\alpha+n-2},
\end{align*}
which is finite for $k$ large enough, and the proof of the Lemma follows.
\end{proof}

\begin{lem}
         \label{lemm:covering.Omega}
				         Let $(X,d)$ be a metric space with the weak homogeneity property (hence separable) and let $\Lambda$ be a nonempty open proper subset of $X$. Let $0 <r_0 < \beta /10$. Then, there exist two families of balls,
         denoted by $\mathcal G_r, \tilde{\mathcal G}_r$,  such that
         \begin{equation*}\mathcal W_{r_0 }=\mathcal G_{r_0 }\cup \tilde{\mathcal G}_{r_0 }=\{B_i\}\end{equation*}
         is a covering of $\Lambda$  by balls of $\mathcal F_\beta $ with the following properties:
         \begin{enumerate}
            \item If $B=B(x_B,s_B)\in \tilde{\mathcal G}_{r_{0}}$,
             then $10B\in \mathcal{F}_\beta $, 
             $d(x_B, \Lambda^{C} )\leq 1 $
              and $\tfrac{1}{2} r_0 d(x_B, \Lambda^{C} )$
              $\leq s_B\leq r_0 d(x_B, \Lambda^{C} )$.
            \item If $B\in\mathcal G_{r_0 }$, then $B \equiv B(x_B,r_0 )$ , $10B\in \mathcal F_\beta $ and $d(x_B, \Lambda^{C} )> 1 $.
            \item If $B,B'\in \mathcal W_{r_0 }$ and $B\cap B'\neq \emptyset $, then: $B\subset 5B'$ and $B'\subset 5B$.
           \item There exists $ M> 0$ such that $\sum _{B\in\mathcal W_{r_0 }} \chi_B(x)\leq M$.
         \end{enumerate}
      \end{lem}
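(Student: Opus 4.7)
The statement is a Whitney-type covering of $\Lambda$, adapted to the weak homogeneity of $(X,d)$, in which the ball radii are forced to be comparable to the truncated distance function $\tilde\delta(x)=\min\{1,d(x,\Lambda^{C})\}$. My plan is to assign to every $x\in\Lambda$ a single candidate ball $B_x=B(x,r_{0}\,\tilde\delta(x))$, apply a Vitali $5r$-selection (made possible by the separability of $X$, itself a consequence of the weak homogeneity hypothesis), and then partition the resulting subcover according as $\tilde\delta(x_B)=d(x_B,\Lambda^{C})\leq 1$ (giving $\tilde{\mathcal G}_{r_{0}}$) or $\tilde\delta(x_B)=1<d(x_B,\Lambda^{C})$ (giving $\mathcal G_{r_{0}}$).

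The inclusion $10B_x\in\mathcal F_{\beta}$ is immediate from $r_{0}<\beta/10$ together with $\tilde\delta(x)\leq d(x,\Lambda^{C})$. Applying the $5r$-covering lemma to the family $\{B(x,r_{0}\tilde\delta(x)/5)\}_{x\in\Lambda}$ extracts a countable disjoint subfamily whose $5$-enlargements $\{B(x_{i},s_{i})\}$, with $s_{i}=r_{0}\tilde\delta(x_{i})$, still cover $\Lambda$. After the partition described above, properties $(1)$ and $(2)$ are evident; in particular $s_B$ sits at the upper end of the prescribed window $[\tfrac12 r_{0}\,d(x_B,\Lambda^{C}),\,r_{0}\,d(x_B,\Lambda^{C})]$ whenever $B\in\tilde{\mathcal G}_{r_{0}}$, and equals $r_{0}$ whenever $B\in\mathcal G_{r_{0}}$.

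For $(3)$, if $B,B'\in\mathcal W_{r_{0}}$ intersect, the $1$-Lipschitz property of $\tilde\delta$ and $d(x_B,x_{B'})\leq s_B+s_{B'}=r_{0}(\tilde\delta(x_B)+\tilde\delta(x_{B'}))$ yield
\begin{equation*}
\tilde\delta(x_B)/\tilde\delta(x_{B'})\leq (1+r_{0})/(1-r_{0})<11/9,
\end{equation*}
so $s_B<2s_{B'}$; a triangle inequality then gives $B\subset 5B'$, and symmetrically $B'\subset 5B$. For $(4)$, property $(3)$ forces any balls $B_{1},\dots,B_{N}\in\mathcal W_{r_{0}}$ through a common point $x$ to have mutually comparable radii and centers all lying in $5B_{1}$; the Vitali-disjointness of the rescaled balls $\{B(x_{B_{i}},s_{B_{i}}/5)\}$ combined with the weak homogeneity property of $X$ then bounds $N$ by a universal constant $M$. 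The main obstacle is keeping track of the multiplicative constants produced by the Vitali selection and checking that they remain compatible with the two-sided window in $(1)$, in particular across the boundary $\tilde\delta(x)=1$ where the two families meet; once the candidate-plus-Vitali framework is set up, all four properties verify by routine triangle-inequality arguments.
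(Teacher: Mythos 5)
Your construction is correct, but it takes a genuinely different route from the paper's. The paper slices $\Lambda$ into dyadic bands $\Lambda_k=\{x:2^{-k}<d(x,\Lambda^{C})\le 2^{-k+1}\}$ (plus $\Lambda_0=\{d(x,\Lambda^{C})>1\}$), chooses in each band a maximal $r_02^{-k}$-separated set of centers, and covers the band by balls of the fixed radius $r_02^{-k}$; comparability of radii of intersecting balls is then deduced from the combinatorial fact that intersecting balls must come from the same or adjacent bands ($|k-l|\le 1$), and the finite overlap comes from the separation of the net in each band together with weak homogeneity. You instead run a single Vitali $5r$-selection on the full distance-adapted family $\{B(x,r_0\tilde\delta(x)/5)\}_{x\in\Lambda}$, $\tilde\delta=\min\{1,d(\cdot,\Lambda^{C})\}$, and obtain the radius comparability of intersecting balls directly from the $1$-Lipschitz character of $\tilde\delta$, with the clean constant $(1+r_0)/(1-r_0)<11/9$. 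The two selection steps are of the same nature (a maximal separated set is a greedy Vitali selection), so neither is more general; what your version buys is the elimination of the band bookkeeping and of the $|k-l|\le 1$ case analysis, at the cost of invoking the $5r$-covering lemma (valid in any metric space for uniformly bounded radii, which your radii $\le r_0/5$ are) and of producing radii pinned at the top endpoint $s_B=r_0\,d(x_B,\Lambda^{C})$ of the window in property (1) — which is still admissible. The one step you should write out in full is the overlap bound: disjointness of the shrunk balls $B(x_{B_i},s_{B_i}/5)$ plus radius comparability gives centers lying in a ball of radius comparable to $s_{B_1}$ that are mutually separated by a fixed fraction of $s_{B_1}$, and since weak homogeneity only controls points separated by more than half the radius, you must iterate it a bounded number of times to get the universal bound $M$; this is routine (and the paper is equally terse at the corresponding point), but it is where the hypothesis on $X$ actually enters.
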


  \begin{proof}
         Let $ r_0<\beta /10$ and define
\begin{equation*}
\Lambda _k =\{x\in \Lambda :  2^{-k}<d(x, \Lambda^{C})\leq 2^{-k+1}\}
\end{equation*}
 for $k>0$, and
\begin{equation*}
 \Lambda _0=\{x\in\Lambda :  1<d(x,\Lambda^{C})  )\}.
\end{equation*}
          We have that          $\Lambda =\bigcup _{i=0}^\infty \Lambda_k$.          For each $k\geq 0$ let us choose a maximal family of points $\{x_{ik}\}_{i=1}^\infty$ in $ \Lambda_k$ such that $ d(x_{ik},x_{ij})> r_0 2^{ -k}.$          For each $k\geq 0$ let us consider the family of balls
$\{B(x_{ik},r_0 2^{-k})\}$.           This family clearly verifies that $ \Lambda_k \subset   \bigcup _{i=1}^\infty B(x_{ik}, r_0 2^{-k})$, and

          \begin{equation*}
           \Lambda =  \bigcup _{k=0}^\infty  \bigcup _{i=1}^\infty B(x_{ik}, r_0 2^{-k}).
          \end{equation*}

          Let us consider for each $k\geq1$ a ball $ B_{ik}= B(x_{ik}, r_{B_{ik}})$ such that $r_{B_{ i_k}}=r_{0} 2^{-k}$. We can easily see that $\{B_{ik}\}$ is a covering of $\Lambda \setminus \Lambda _0$ such that  $10B_{ik}\in \mathcal F_\beta$  {and}
\begin{equation*}
    \dfrac 12 r_0 d(x_{ik},\Lambda^{C} ) < r_{B_{ik}} \leq r_0 d(x_{ik}, \Lambda^{C}).
\end{equation*}

          For  $k=0$ let us consider the family $\{B_{i0}\}=\{B(x_{i0},r_{0})\}_{i=1}^\infty$.     We have that $B_{i0}\in \mathcal F_\beta $ and $10 B_{i0}\in\mathcal F_\beta $.    If $B_{ik}\cap B_{jl}\neq \emptyset $,with  $k,l\geq 0$, then:
          \begin{equation*}B_{jl}\subset 5B_{ik}.\end{equation*} Indeed, if $z\in B_{ik}\cap B_{jl}$, then
\begin{equation*}
2^{-k} \leq  d(x_{ik},\Lambda^{C}) \leq  d(x_{jl},\Lambda^{C}) + d(x_{jl},z) + d(z,x_{ik})\leq 2^{-l+1} + r_{0}2^{-l}+r_{0}2^{k},
\end{equation*}
 from where $2^{-k+l}\leq {{2+r_{0}}\over{1-r_{0}}} < 3$, and by simmetry, also  $2^{-l+k}< 3$, which leads us to $|k-l|\leq 1$. The worst possible situation is $k=l+1$. Let us consider $y\in B_{jl}$, then
\begin{equation*}
d(y,x_{ik})\leq d(y,x_{jl})+d(x_{jl},z)+d(z,x_{ik}) < r_{0}2^{-l}+r_{0}2^{-l}+r_{0}^{-k}=5r_{ik}.
\end{equation*}

            Thus, from the above computations, we can conclude that property 3 holds and $x_{jl}$ is in the same band $\Lambda_{k}$ or in a neighbour band $\Lambda_{j}$. Hence,  the sets $\{x_{jl}\in\Lambda_{j}: B_{ik}\cap B_{jl}\neq \emptyset \}$, with $|k-j|\leq 1$, have at most finite cardinal which does not depend on $B_{ik}$. Then,
there exists $M$, independent of $r_{0}$ and $\beta$, such that
     \begin{equation*}
             \sum_{k=0}^\infty\sum_{i=1}^\infty \chi_{B_{ik}}(x)\leq M.
   \end{equation*}

        \end{proof}

   Let us state the following Lemma, which is often used in the paper without mentioning it.
\begin{lem}
       \label{lemm:tecnico}
        Let $(X,d)$ be a metric space and let $\Lambda$ be a nonempty open proper subset of $X$. Let  $0<\beta<1$ and $\alpha >1$. Given $B_{0}=B(z_{0},r_{0})$ such that  $\alpha B_{0}\in\mathcal{F}_{\beta}$ and any $x\in B_{0}$ we have that $r_{0} < \frac{\beta}{\alpha -\beta} d(x,\Lambda^{C}) $ and $B\big(x,(\alpha - \beta)r_0 \big)\in\mathcal{F}_{\beta}$.
\end{lem}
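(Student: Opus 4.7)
My plan is to prove both assertions as direct consequences of the triangle inequality combined with the defining inequality of $\mathcal{F}_\beta$. The hypothesis $\alpha B_0 = B(z_0,\alpha r_0)\in \mathcal{F}_\beta$ unpacks to $\alpha r_0 < \beta\, d(z_0,\Lambda^C)$, and in particular forces $z_0\in\Lambda$ and $r_0 < d(z_0,\Lambda^C)$ (since $\beta<1<\alpha$), so the ball $B_0$ itself is contained in $\Lambda$ and any $x\in B_0$ lies in $\Lambda$, making the quantity $d(x,\Lambda^C)$ meaningful.

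The first step is to obtain a lower bound for $d(x,\Lambda^C)$ when $x\in B_0$. Since $d(\cdot,\Lambda^C)$ is $1$-Lipschitz (a standard triangle inequality argument), one has
\begin{equation*}
d(z_0,\Lambda^C)\le d(z_0,x)+d(x,\Lambda^C)<r_0+d(x,\Lambda^C).
\end{equation*}
Substituting the hypothesis $d(z_0,\Lambda^C)>\alpha r_0/\beta$ yields
\begin{equation*}
d(x,\Lambda^C)>\frac{\alpha r_0}{\beta}-r_0=\frac{\alpha-\beta}{\beta}\,r_0,
\end{equation*}
which is exactly the first conclusion after rearrangement.

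The second conclusion follows immediately: the previous inequality is equivalent to $(\alpha-\beta)r_0<\beta\, d(x,\Lambda^C)$, which together with $x\in\Lambda$ is precisely the condition for $B(x,(\alpha-\beta)r_0)$ to belong to $\mathcal{F}_\beta$. There is no real obstacle here; the only point requiring a small note is verifying $x\in\Lambda$, which one handles at the outset by observing that the hypothesis forces $r_0<d(z_0,\Lambda^C)$ and hence $B_0\subset\Lambda$.
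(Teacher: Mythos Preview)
Your proof is correct and follows essentially the same approach as the paper's own argument: both unpack the hypothesis $\alpha B_0\in\mathcal{F}_\beta$ as $\alpha r_0<\beta\,d(z_0,\Lambda^C)$, apply the triangle inequality $d(z_0,\Lambda^C)\le d(z_0,x)+d(x,\Lambda^C)<r_0+d(x,\Lambda^C)$, and rearrange to obtain $(\alpha-\beta)r_0<\beta\,d(x,\Lambda^C)$. Your version is slightly more explicit in verifying that $x\in\Lambda$ (needed for the center condition in $\mathcal{F}_\beta$), which the paper leaves implicit.
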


\begin{proof}
   Since  $\alpha B_{0}\in\mathcal{F}_{\beta}$,
    we have that
\begin{equation*}
  r_{0} <{\frac\beta\alpha} d\big(z_{0},\Lambda^{C}\big)<{\frac\beta\alpha}(d(x,z_{0})+d(x,\Lambda^{C}))<{\frac\beta\alpha}r_{0}+{\frac\beta\alpha}d(x,\Lambda^{C}),
\end{equation*}

 therefore $\big( 1-{\frac\beta\alpha} \big)r_{0}<{\frac\beta\alpha}d(x,\Lambda^{C})$, and finally
 \begin{equation*}
   (\alpha -\beta) r_{0} < \beta d(x,\Lambda^{C}).
\end{equation*}

\end{proof}

We also need the following version of the  Fefferman-Stein  inequality on spaces of homogeneous type:

\begin{lem}[See \cite{PS}] \label{lemm:FS-PS.tipo.homog}  Let $(X,d,\mu)$ be a space of homogeneous type regular in measure, such that $\mu(X)<\infty$. Let $f$ be a positive function in $L^{\infty}$ with bounded support and $w\in A_{\infty}$. Then, for every $p$, $1<p<\infty$, there exists a positive constant $C=C([w]_{A_{\infty}})$ such that if $\|M_{X}f\|_{L^{p}(w)}<+\infty$, then
\begin{equation*}
\|M_{X}f\|_{L^{p}(w)}^{p} \leq  C \|M^{\sharp}_{X}f\|_{L^{p}(w)}^{p},
\end{equation*}
where
\begin{align*}
 M_{X}f(x)
    &=\sup _{x\in P \in {F( X)} } {1\over{\mu(P\cap X|)}}\int _{P \cap X}|f(y)| d\mu(y), \\
 M^{\sharp}_{X}f(x)
    &=\sup _{x\in P \in {F( B)} } {1\over{\mu(P \cap X)}}\int _{P \cap X}|f(y) - f_{P\cap X}| d\mu(y)+ \frac{1}{\mu(X)}\int_{X} f(y) d\mu(y),
\end{align*}
with
\begin{equation*}
 {F( B)}= \{B(x_{B},r_{B}) : x_B \in X, r_{B} >0 \}.
\end{equation*}

\end{lem}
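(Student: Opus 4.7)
The strategy is the classical one for Fefferman--Stein type inequalities: establish a good-$\lambda$ inequality relating $M_X f$ and $M_X^\sharp f$, and then integrate it against $\lambda^{p-1}\, d\lambda$ using the distribution formula $\|g\|_{L^p(w)}^p = p\int_0^\infty \lambda^{p-1} w(\{g>\lambda\})\,d\lambda$. The hypothesis $\|M_X f\|_{L^p(w)}<\infty$ is used at the last step to absorb a term on the left.

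First I would fix the threshold $\lambda_0 = \tfrac{2}{\mu(X)}\int_X f\,d\mu$, so that by definition $\lambda_0 \le 2\, M_X^\sharp f(x)$ for every $x\in X$. For $\lambda>\lambda_0$, the set $\Omega_\lambda = \{M_X f > \lambda\}$ is a proper open subset of $X$ (else the average of $f$ over $X$ would exceed $\lambda/C > \lambda_0/C$). A Whitney--Vitali decomposition adapted to $(X,d,\mu)$ produces a family of balls $\{B_j\}$ with bounded overlap such that $\Omega_\lambda = \bigcup_j B_j$ and a fixed dilate $\kappa B_j$ meets $\Omega_\lambda^c$; in particular the average of $f$ over $\kappa B_j$ is controlled by $C\lambda$.

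Next I would prove the local good-$\lambda$ estimate: for a small parameter $\gamma>0$ to be chosen,
\begin{equation*}
\mu\bigl(\{x\in B_j:\, M_X f(x)>2\lambda\}\cap\{M_X^\sharp f \le \gamma\lambda\}\bigr) \le C\gamma\, \mu(B_j).
\end{equation*}
This is obtained by the standard splitting $f=(f-f_{\kappa B_j})\chi_{\kappa B_j} + f_{\kappa B_j}\chi_{\kappa B_j} + f\chi_{(\kappa B_j)^c}$. The last piece contributes at most $C\lambda$ to $M_Xf$ on $B_j$ by the Whitney property; the constant piece contributes $f_{\kappa B_j}\le C\lambda$; and the first piece is handled by the weak $(1,1)$ boundedness of $M_X$, whose $L^1$-norm is bounded by $\mu(\kappa B_j)\cdot M_X^\sharp f(y) \le C\gamma\lambda\,\mu(B_j)$ whenever some $y\in B_j$ satisfies $M_X^\sharp f(y)\le\gamma\lambda$. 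Summing over the $B_j$ and invoking the $A_\infty$ property of $w$ (which yields $\delta,\eta>0$ with $\mu(E)\le\eta\mu(B)\Rightarrow w(E)\le(1-\delta)w(B)$) upgrades this to
\begin{equation*}
w\bigl(\{M_X f > 2\lambda,\, M_X^\sharp f \le \gamma\lambda\}\bigr) \le (1-\delta)\, w\bigl(\{M_X f > \lambda\}\bigr),\qquad \lambda>\lambda_0.
\end{equation*}
Integrating $p\lambda^{p-1}\,d\lambda$ from $\lambda_0$ to $\infty$, rescaling, and isolating a piece below $\lambda_0$ gives
\begin{equation*}
\|M_X f\|_{L^p(w)}^p \le 2^p(1-\delta)\|M_X f\|_{L^p(w)}^p + C\gamma^{-p}\|M_X^\sharp f\|_{L^p(w)}^p + C\lambda_0^p\, w(X).
\end{equation*}
Choosing $\gamma$ small enough that $2^p(1-\delta)<1$, the first term is absorbed using $\|M_X f\|_{L^p(w)}<\infty$, and the $\lambda_0^p w(X)$ term is dominated by $C\|M_X^\sharp f\|_{L^p(w)}^p$ precisely because $\lambda_0 \le 2 M_X^\sharp f$ pointwise, thanks to the extra summand $\tfrac{1}{\mu(X)}\int_X f\,d\mu$ built into $M_X^\sharp$.

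The main obstacles are two. First, executing the Calderón--Zygmund/Whitney decomposition for the specific maximal operator $M_X$ defined over the family $F(X)$ of balls centered in $X$: one must verify that $M_X$ satisfies the weak $(1,1)$ inequality needed for the local estimate, which uses only the regularity of the measure $\mu$ and the weak homogeneity of $(X,d)$. Second, and more delicate, is handling the finite-measure setting $\mu(X)<\infty$: if the additive average term were absent from $M_X^\sharp$, the good-$\lambda$ inequality would fail at scales $\lambda\lesssim f_X$, since $\Omega_\lambda$ would be all of $X$ and no Whitney decomposition is available; the extra term is exactly the correction that closes the argument and lets us absorb the residual contribution at the $\lambda_0$ threshold.
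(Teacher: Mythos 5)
The paper does not prove this lemma: it is stated with the attribution ``[See \cite{PS}]'' and no proof is given, so there is no in-paper argument to compare yours against. That said, your good-$\lambda$ strategy is the standard Fefferman--Stein argument on spaces of homogeneous type and is essentially the route followed in Pradolini--Salinas; in particular you correctly identify the role of the extra additive term $\frac{1}{\mu(X)}\int_X f\,d\mu$ in $M^{\sharp}_X f$ as the device that closes the argument when $\mu(X)<\infty$ (it both bounds the residual term $\lambda_0^p\, w(X)$ and guarantees a workable threshold), and the absorption step is exactly where the hypothesis $\|M_X f\|_{L^p(w)}<\infty$ enters.

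Two small points you should tighten. First, the threshold should be taken as $\lambda_0 = c_0\,\mu(X)^{-1}\int_X f\,d\mu$ with $c_0$ a constant depending on the weak $(1,1)$ constant of $M_X$, not literally the factor $2$: the claim that $\{M_X f>\lambda\}\subsetneq X$ for $\lambda>\lambda_0$ comes from the weak $(1,1)$ bound $\mu(\{M_X f>\lambda\})\le C\lambda^{-1}\int_X f\,d\mu$, which forces a strict subset only once $\lambda > C\,\mu(X)^{-1}\int_X f$; with your $c_0=2$ this need not hold if $C>2$. This is cosmetic (enlarge $c_0$), but it is the kind of constant bookkeeping that good-$\lambda$ arguments live and die on, and the pointwise bound $\lambda_0\le c_0 M^{\sharp}_X f$ still holds for any $c_0$, so nothing downstream breaks. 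Second, you state but do not justify the Whitney/Vitali decomposition and the weak $(1,1)$ inequality for $M_X$ in the homogeneous-type setting; both are standard (Coifman--Weiss, Vitali covering under doubling), but since these are precisely the ingredients the finite-measure geometry could in principle obstruct, a sentence invoking the relevant results on spaces of homogeneous type would complete the argument.
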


\section{Previous results for the proof of the Theorem \ref{thm:principal}}
\label{previous}

   In order to prove Theorem \ref{thm:principal} we will need the following results:

\begin{thm}[See \cite{CFL1} and \cite{PS}]
      \label{thm:bound.D2u}
          Under assumptions (1) and (2), for any   $p\in(1,\infty)$ and
          $w\in A_{p,\text{loc}}(\Omega )$, there exist  $C$ and $r_0> 0$
            such that for any ball $B_{0}= B(x_0, r_0)$  in  $\Omega$ with $10 B_{0}\in\mathcal{F}_{\beta}$ and any  $u\in W^{2,p}_0(B_0)$ the following inequality holds
           \begin{equation*}\|D^2 u\|_{L_w^p(B_0)}\leq C\|Au\|_{L^p_w(B_0)}.
           \end{equation*}
   \end{thm}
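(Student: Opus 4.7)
The plan is to adapt the Chiarenza-Frasca-Longo argument for the unweighted case to the local weighted setting by exploiting Lemma \ref{lemm:FS-PS.tipo.homog} on the homogeneous space $(B_0,|\cdot|,dx)$, and using that the local $A_{p,\text{loc}}$ condition on $\Omega$ degenerates into a genuine $A_p$ condition when restricted to $B_0$ (because $10 B_0\in\mathcal F_\beta$ guarantees, via Lemma \ref{lemm:tecnico}, that every Euclidean subball of $B_0$ lives in $\mathcal F_\beta$). The small-ball radius $r_0$ will be chosen according to the $VMO$ modulus $\eta$ of the coefficients $a_{ij}$.

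First I would freeze the coefficients at $x_0$: write $A_0$ for the constant-coefficient operator with matrix $(a_{ij}(x_0))$ and let $\Gamma_0$ be its fundamental solution. For $u\in W^{2,p}_0(B_0)$ the classical singular-integral representation from \cite{CFL1} gives, pointwise a.e.,
\begin{equation*}
u_{x_i x_j}(x) = T_{ij}(Au)(x) + \sum_{h,k}\bigl[a_{hk}-a_{hk}(x_0),\,K_{ij}\bigr](u_{x_h x_k})(x) + c_{ij}(x_0)\,Au(x),
\end{equation*}
where $T_{ij}$ is a Calderón–Zygmund operator associated to $\Gamma_0$ and $K_{ij}$ is the companion CZ operator representing second derivatives. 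Because the coefficients of $A_0$ depend only on $x_0$, the kernels of $T_{ij}$ and $K_{ij}$ are standard CZ kernels on $B_0$, uniformly in $x_0$.

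Next, the standard sharp-maximal argument of \cite{CFL1}–\cite{PS}, localized in $B_0$, yields, for any $s\in(1,p)$ and every $x\in B_0$,
\begin{equation*}
M^{\sharp}_{B_0}(D^2 u)(x) \;\leq\; C\,\bigl(M_{B_0}(|Au|^{s})(x)\bigr)^{1/s} \;+\; C\,\eta(r_0)\,\bigl(M_{B_0}(|D^2 u|^{s})(x)\bigr)^{1/s},
\end{equation*}
where the factor $\eta(r_0)\to 0$ comes from the commutator with the $VMO$ coefficients on balls of radius comparable to $r_0$. I would then apply Lemma \ref{lemm:FS-PS.tipo.homog} on the space of homogeneous type $B_0$: since every Euclidean ball $B\subset B_0$ centered in $B_0$ of reasonable radius lies in $\mathcal F_\beta$ by Lemma \ref{lemm:tecnico}, the restriction $w|_{B_0}$ is a genuine $A_\infty$ weight on $B_0$ (in fact $A_{p/s}$ after choosing $s$ close enough to $1$ so that $w\in A_{p/s,\text{loc}}$), and also $w|_{B_0}\in A_p(B_0)$ so the Hardy–Littlewood maximal operator $M_{B_0}$ is bounded on $L^{p/s}_w(B_0)$. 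Combining these gives
\begin{equation*}
\|D^2 u\|_{L^p_w(B_0)} \leq \|M_{B_0}(D^2 u)\|_{L^p_w(B_0)} \leq C\,\|M^{\sharp}_{B_0}(D^2 u)\|_{L^p_w(B_0)} \leq C\,\|Au\|_{L^p_w(B_0)} + C\,\eta(r_0)\,\|D^2 u\|_{L^p_w(B_0)}.
\end{equation*}
Choosing $r_0$ so small that $C\,\eta(r_0)\leq 1/2$, the last term can be absorbed on the left, yielding the claim.

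The main obstacle is the sharp-maximal pointwise inequality in the localized setting: one has to show that when estimating oscillations of $D^2 u=D^2(T_{ij}(Au)+\text{commutator})$ over small subballs $B\subset B_0$, the tail contributions of the CZ kernel are controlled by the local maximal functions, and that the commutator term genuinely produces the small factor $\eta(r_0)$ (this requires the $VMO$ hypothesis together with a standard John–Nirenberg/Coifman–Rochberg estimate applied to $a_{ij}-(a_{ij})_B$). Once this pointwise inequality is in hand, the remaining arguments are structural: verifying that $w|_{B_0}\in A_p(B_0)$ follows from $w\in A_{p,\text{loc}}(\Omega)$ via Lemma \ref{lemm:tecnico}, the Fefferman-Stein step is exactly Lemma \ref{lemm:FS-PS.tipo.homog}, and the final absorption is immediate for $r_0$ small.
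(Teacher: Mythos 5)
Your proposal follows essentially the same route as the paper: the Chiarenza--Frasca--Longo representation formula reducing $D^2u$ to a Calder\'on--Zygmund operator plus its commutator with the $VMO$ coefficients, the observation (via Lemma \ref{lemm:tecnico}) that $w\chi_{B_0}$ is a genuine $A_p$ weight on the space of homogeneous type $B_0$, the weighted/sharp-maximal machinery of \cite{PS} (Lemma \ref{lemm:FS-PS.tipo.homog}), and absorption of the commutator term by choosing $r_0$ small relative to the $VMO$ modulus. The only cosmetic difference is that the paper obtains the smallness of the commutator through the standard $\|a-\phi\|_{BMO}<\epsilon$ approximation of a $VMO$ function rather than as an explicit $\eta(r_0)$ factor in the pointwise sharp-maximal estimate, which amounts to the same thing.
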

\begin{proof} The proof follows the same lines of the proof of Lemma 4.1 in \cite{CFL1}, which makes use of expansion into spherical harmonics on the unit sphere in $\RR^{n}$.
 After that, all is reduced  to obtain $L^p$- boundedness of\'{o}n-Zygmund operator $T$ and its conmutator on a ball $B$ contained in $\Omega$ (see Theorems 2.10, 2.11 and
 the representation formula (3.1) in this paper). We can look at the operator $T$ and its conmutator $[T,b]$ acting on functions defined over the space of homogeneous type $B$ equipped
 with the Euclidean metric and the restriction of Lebesgue measure. Also, it is easy to check that the weight $w\chi_{B}$ is in $A_p(B)$, provided $w$ belongs to
 $A_{p,\textit{loc}}(\Omega)$, since $B$ has been chosen such that $10B\in\mathcal{F}_{\beta}$. By the weighted theory of singular integrals and conmutators  on  spaces
  of homogeneous type (see for instance \cite{PS}), applied to our operator the result follows.
\end{proof}
     \begin{thm}[See \cite{HSV}, Proposition 4.1]% (Interpolation)
        \label{thm:bound.epsilon}
              Let $1<p<\infty$ and $w\in A_{p,\text{loc}}(\Omega)$.
             For any function $u\in W^{k,p}_{\delta,w}(\Omega)$,
             and any  $j$, $1\leq j\leq k-1$,
             and $\gamma$ such that $|\gamma|=j$, we have
              \begin{equation*}\|\delta ^jD^\gamma u\|_{L^p_w(\Omega )}\leq C(\epsilon ^{-j}\|u\|_{L^p_w(\Omega )}+\epsilon ^{k-j}\|\delta ^k D^ku\|_{L^p_w(\Omega )}),
            \end{equation*}
     for any $0<\epsilon<1$ and $C$ independent of $u$ and $\epsilon$, with $\delta(x)=\min\{1 , d(x,\Omega)^{C}\}$.
     \end{thm}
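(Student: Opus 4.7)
My plan is to reduce the weighted interpolation inequality to a local version on a single ball via a Whitney--type decomposition, then to sum with bounded overlap.

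First I would fix $0 < r_{0} < \beta/10$ and apply Lemma \ref{lemm:covering.Omega} to obtain a covering $\{B_{i}\}$ of $\Omega$ satisfying properties (1)--(4). The crucial feature is that on each ball $B_{i} = B(x_{i}, r_{i})$ the distance $\delta$ is essentially constant and comparable to $r_{i}$: if $B_{i}\in \tilde{\mathcal G}_{r_{0}}$, then $r_{i}\sim r_{0}\, d(x_{i},\Omega^{c})$ and $d(x_{i},\Omega^{c})\leq 1$, so for $y\in B_{i}$ one has $\delta(y)\sim d(x_{i},\Omega^{c})\sim r_{i}/r_{0}$; if $B_{i}\in \mathcal G_{r_{0}}$, then $r_{i}=r_{0}$ and $d(x_{i},\Omega^{c})>1$, and choosing $r_{0}$ small gives $\delta(y)\sim 1\sim r_{i}$. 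In either case $\delta \sim r_{i}$ on $B_{i}$ with constants depending only on the fixed $r_{0}$.

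Next, on each $B_{i}$ I would establish the local weighted interpolation
\begin{equation*}
r_{i}^{j}\,\|D^{\gamma} u\|_{L^{p}_{w}(B_{i})} \leq C\bigl( \epsilon^{-j}\|u\|_{L^{p}_{w}(B_{i})} + \epsilon^{k-j}\, r_{i}^{k}\,\|D^{k}u\|_{L^{p}_{w}(B_{i})}\bigr).
\end{equation*}
This is a scaled weighted Gagliardo--Nirenberg inequality on the ball $B_{i}$; since $10 B_{i}\in \mathcal F_{\beta}$ the restriction $w\chi_{B_{i}}$ is a genuine $A_{p}(B_{i})$ weight (same argument as in the proof of Theorem \ref{thm:bound.D2u}), and the inequality can be obtained either from the pointwise Riesz--potential bound $|D^{\gamma}u|\lesssim I_{k-j}(|D^{k}u|) + (\text{lower order})$ after a suitable extension and the $L^{p}_{w}$-boundedness of fractional integrals, or by a mollification/absorption argument combined with the boundedness of the Hardy--Littlewood maximal operator on $L^{p}_{w}$. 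Replacing $r_{i}$ by $\delta$ using Step~1 turns this into
\begin{equation*}
\|\delta^{j} D^{\gamma} u\|_{L^{p}_{w}(B_{i})} \leq C\bigl( \epsilon^{-j}\|u\|_{L^{p}_{w}(B_{i})} + \epsilon^{k-j}\|\delta^{k} D^{k}u\|_{L^{p}_{w}(B_{i})}\bigr),
\end{equation*}
with $C$ independent of $i,\epsilon,u$. Raising both sides to the $p$-th power, summing over $i$, and invoking the bounded overlap $\sum_{i}\chi_{B_{i}}\leq M$ from property (4) then yields the global estimate.

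The main obstacle is the scale-invariant weighted Gagliardo--Nirenberg step on a single ball. The subtlety is having a constant that depends on the $A_{p}$ constant of $w$ on $B_{i}$ but is independent of the radius $r_{i}$: the natural way to guarantee this is to renormalize $B_{i}$ to the unit ball and verify that the $A_{p}(B_{i})$ constant of $w\chi_{B_{i}}$ is controlled uniformly by the $A_{p,\text{loc}}$ constant of $w$, which holds precisely because the enlargement $10 B_{i}$ still lies in $\mathcal F_{\beta}$. Once this uniformity is in place, the covering and summation arguments are essentially routine.
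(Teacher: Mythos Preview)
Your outline is viable, but it departs from the argument the paper has in mind. The paper does not reprove Theorem~\ref{thm:bound.epsilon} (it cites \cite{HSV}), but the parabolic analogue Theorem~\ref{thm:bound.epsilonP} is proved in full and is explicitly stated to follow the same lines; that proof is the natural point of comparison.

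The paper's route is \emph{pointwise} rather than ball-by-ball in $L^{p}_{w}$. Starting from the Sobolev integral representation
\[
|D^{\gamma}v(x)|\leq C\Big(\sigma^{-n-j}\int_{B(x,\sigma)}|v|+\int_{B(x,\sigma)}\frac{|D^{k}v|}{|x-y|^{n-k+j}}\Big),
\]
applied to $u\eta_{P}$ with $\eta_{P}$ a cutoff adapted to a Whitney ball $P$, one bounds the first integral by $\sigma^{-j}M_{\beta,\text{loc}}u(x)$ and, after a dyadic decomposition, the second by $\sigma^{k-j}M_{\beta,\text{loc}}(D^{k}u)(x)$. Choosing $\sigma=\varepsilon r_{P}\simeq\varepsilon\delta(x)$ yields a single pointwise inequality
\[
|D^{\gamma}u(x)|\leq C\big((\varepsilon\delta(x))^{-j}M_{\beta,\text{loc}}u(x)+(\varepsilon\delta(x))^{k-j}M_{\beta,\text{loc}}(D^{k}u)(x)\big)
\]
valid for a.e.\ $x\in\Omega$. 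One then multiplies by $\delta^{j}$, takes $L^{p}_{w}$ norms, and invokes Theorem~\ref{thm:acot.mxml.loc} together with the observation that $w\delta^{s}\in A_{p,\text{loc}}$ for every real $s$.

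The difference from your plan is the location of the hard step. You package it as a uniform weighted Gagliardo--Nirenberg inequality on each $B_{i}$ (relying on $w\chi_{B_{i}}\in A_{p}(B_{i})$ with uniformly bounded constant), then sum. The paper instead extracts a pointwise maximal-function bound and uses the $L^{p}_{w}$-boundedness of $M_{\beta,\text{loc}}$ once, globally. The paper's approach is a bit cleaner: it avoids quoting or proving a separate weighted interpolation theorem on balls, and the passage from local to global is automatic because the pointwise estimate no longer references the covering. Your approach is also correct, but note that the local inequality as you wrote it (right-hand side on $B_{i}$, not an enlargement) needs either a Stein-type extension from $B_{i}$ or a cutoff supported in $2B_{i}$; in the latter case the sum uses bounded overlap of the $2B_{i}$, which does follow from properties~(3)--(4) of Lemma~\ref{lemm:covering.Omega} but should be said explicitly.
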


     The main Theorem of this section is the following:
 \begin{thm} \label{thm:potencial}
 Let $a_{ij}\in VMO$, for $i,j=1,\dots,n$, $V\in RH_q$ with $1< p\leq q$, and $w\in A_{{{q-1}\over{q-p}}p,\text{loc}}$. Then there exist positive constants $C$ and $r_{0}$
 such that for any ball $B_{0}= B(z_0, r_0)$  in  $\Omega$ with $10 B_{0}\in\mathcal{F}_{\beta}$ and any  $u\in C^{\infty}_{0}(B_0)$, we have that
\begin{equation*}
\|Vu\|_{{L^p_w}(B_0)}  \leq C\|Lu\|_{L^p_w(B_0)}.
\end{equation*}

\end{thm}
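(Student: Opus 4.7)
The plan is to derive an explicit representation formula for $Vu$ on $B_0$ via the fundamental solution of the constant-coefficient operator obtained by freezing the leading coefficients at $z_0$, and then estimate each resulting piece in $L^p_w(B_0)$ using the kernel bound \eqref{ellipticfundsolbound}, the H\"ormander condition $H_1(q)$ on the kernel $W$, and the local-$A_p$ weighted theory of Theorem \ref{thm:acot.mxml.loc}. The resulting $\|D^2u\|$ contribution will then be absorbed via Theorem \ref{thm:bound.D2u}, and the residual $\|Vu\|$ contribution will be absorbed by choosing $r_0$ small enough.

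I would begin by setting $A_0 = -\sum_{ij} a_{ij}(z_0) \partial_{x_ix_j}$ and $L_0 = A_0 + V$. By \eqref{ellipticfundsolbound}, $L_0$ admits a fundamental solution $\Gamma(z_0;\cdot,\cdot)$ with the stated pointwise bound, and for $u \in C_0^\infty(B_0)$ Green's formula reads $u(x) = \int \Gamma(z_0;x,y) L_0 u(y)\, dy$. Writing
\[
L_0 u(y) = Lu(y) + \sum_{ij}\bigl(a_{ij}(z_0) - a_{ij}(y)\bigr)\, u_{y_iy_j}(y)
\]
and multiplying through by $V(x)$ yields a representation of $Vu$ as the sum of a positive integral operator applied to $Lu$ and of commutator-type integrals with symbols $a_{ij}(z_0) - a_{ij}(y)$ acting on the second derivatives $u_{y_iy_j}$.

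For the first term, the kernel $V(x)\Gamma(z_0;x,y)$ is controlled, via \eqref{ellipticfundsolbound} and \eqref{rhoxrhoy}, by a positive kernel comparable with $W(x,y)$ after a H\"older exchange between $V(x)$ and $V(y)$ that uses $V \in RH_q$ together with the non-standard local-$A_p$ class $A_{\frac{q-1}{q-p}p,\text{loc}}$; its boundedness on $L^p_w(B_0)$ follows from the fact that $10B_0 \in \mathcal{F}_\beta$ makes $w|_{B_0}$ a genuine global $A_p(B_0)$ weight, so the local maximal operator theory of Theorem \ref{thm:acot.mxml.loc} and Lemma \ref{lemm:FS-PS.tipo.homog} apply to the associated Riesz-type majorant. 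For the commutator pieces, the $H_1(q)$ condition \eqref{H1qcondition} for the elliptic kernel $W$ (the parabolic analog being Lemma \ref{lemm:parabolickernelh1q}) combined with a Coifman-Rochberg-Weiss type theorem on spaces of homogeneous type and the $VMO$ hypothesis on $a_{ij}$ gives an estimate
\[
\Bigl\|\sum_{ij} C_{ij}(u_{y_iy_j})\Bigr\|_{L^p_w(B_0)} \leq C\,\eta(r_0)\,\|D^2 u\|_{L^p_w(B_0)},
\]
with $\eta(r_0)\to 0$ as $r_0 \to 0^+$. Theorem \ref{thm:bound.D2u} then replaces $\|D^2 u\|_{L^p_w(B_0)}$ by $C\bigl(\|Lu\|_{L^p_w(B_0)} + \|Vu\|_{L^p_w(B_0)}\bigr)$, and choosing $r_0$ so small that $C\eta(r_0) < 1/2$ permits the absorption of $\|Vu\|$ into the left-hand side.

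I expect the main obstacle to lie in the first step, namely the boundedness on $L^p_w(B_0)$ of the positive operator with kernel $V(x)\Gamma(z_0;x,y)$. The difficulty is that $V(x)$ sits outside the integral in its $x$-variable, whereas the available $H_1(q)$ condition is formulated for the kernel $W(x,y) = V(y)\cdots$ with $V$ on the integration variable. The required transfer of $V(x)$ to $V(y)$ hinges on the reverse-H\"older exponent $q$ of $V$, and it is precisely this H\"older manoeuvre that forces the weight to belong to the shifted class $A_{\frac{q-1}{q-p}p,\text{loc}}$ and that makes the restriction $p \leq q$ essential.
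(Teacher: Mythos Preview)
Your overall architecture---representation via the fundamental solution of a frozen operator, then bound the main term and a small commutator term, then absorb using Theorem~\ref{thm:bound.D2u}---is the same as the paper's. But there is a real gap in how you freeze.

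You freeze at the \emph{fixed} center $z_0$, which produces integrals of the form
\[
\int_{B_0} V(x)\,\Gamma(z_0;x,y)\,\bigl(a_{ij}(z_0)-a_{ij}(y)\bigr)\,u_{y_iy_j}(y)\,dy.
\]
This is \emph{not} a commutator: it is simply $V(x)\,T\bigl[(a_{ij}(z_0)-a_{ij})\,D^2u\bigr](x)$, i.e.\ the operator $T$ applied to $D^2u$ multiplied by the fixed function $a_{ij}(z_0)-a_{ij}(\cdot)$. A Coifman--Rochberg--Weiss theorem does not apply here, and the $VMO$ smallness argument breaks down: subtracting the constant $a_{ij}(z_0)$ does not change the $BMO$ norm of $a_{ij}$, and $VMO$ gives no pointwise smallness of $a_{ij}(z_0)-a_{ij}(y)$ on $B_0$. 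So the claimed bound $C\,\eta(r_0)\,\|D^2u\|_{L^p_w(B_0)}$ is not justified.

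The paper avoids this by the standard Chiarenza--Frasca--Longo ``defreezing'' trick: freeze at a \emph{variable} point $x_0\in B_0$, write the representation, and only then set $x_0=x$. This yields the genuine positive commutator
\[
S_{k,a}f(x)=V(x)\int \frac{|a(x)-a(y)|}{(1+|x-y|/\rho(x))^{k}\,|x-y|^{n-2}}\,f(y)\,dy,
\]
whose symbol is $a(x)-a(y)$. For this operator the paper proves an abstract positive-commutator bound (Theorem~\ref{thm:abstracto}) with constant $C\|a\|_{BMO}$, and then the $VMO$ decomposition $a=(a-\phi)+\phi$ with $\phi$ uniformly continuous and $\phi=\psi$ on $B_0$, $\|\psi\|_{BMO}\le\omega_\phi(2r_0)$, delivers the $\epsilon$-smallness (Theorem~\ref{thm:Ska.con.conmutador}). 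Your remaining steps (duality for the main term $S_k$, the role of the exponent $\tfrac{q-1}{q-p}p$, and the absorption via Theorem~\ref{thm:bound.D2u}) match the paper; only the freezing point needs to change.
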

\begin{proof}

   For $z_{0}\in\Omega$ pick a ball $B_{0}:=B(z_{0},r_{0})$ with $r_{0}$ to be chosen later. We follow the argument from \cite{BBHV}: let $x_{0}\in B_{0}$ and fix the coefficients of $A$ at $x_{0}$, namely $a_{ij}(x_{0})$, to obtain the operator

\begin{equation*}
L_{0}u=-\sum _{i,j=1}^{n} a_{ij}(x_{0})
   u_{x_{i} x_{j}}  + Vu = A_{0}u+Vu.
\end{equation*}

Rewrite the operator $L_{0}$ in divergence form:

\begin{equation*}
L_{0}u=-\bigg(\sum _{i,j=1}^{n} a_{ij}(x_{0}) u_{x_{i}}\bigg)_{x_{j}} + Vu.
\end{equation*}

   From proposition 4.9 of \cite{D} we know that the operator $L_{0}$ has a fundamental solution $\Gamma(x_{0};x,y)$ which satisfies that for every positive integer $k$ there exists a constant $C_{k}$, independent of $x_{0}$, such that \begin{align}\label{cota.Gamma} \Gamma(x_{0};x,y) & \leq C_{k} {1\over {\big( 1+{{|x-y|}\over{\rho(x)}} \big)^{k} }} {1\over{|x-y|^{n-2}}},\end{align} where $\rho(x)$ is the critical radius (recall section \ref{prelim:definitions}).

   Thus, for any $u\in C^{\infty}_{0}(B_{0})$, $x\in B_{0}$,
\begin{align*}
u(x) = & \int  \Gamma(x_{0};x,y) L_{0}u(y)dy= \\
 = & \int  \Gamma(x_{0};x,y)Lu(y)dy + \int  \Gamma(x_{0};x,y) [A_{0}u(y)-Au(y)] dy.\\
\end{align*}

   Now if we let $x_{0}=x$, we obtain
\begin{align}
\label{u.defreeze}
u(x) = & \int  \Gamma(x;x,y) Lu(y) dy + \sum _{i,j=1}^{n} \int  \Gamma(x;x,y) [a_{ij}(y)-a_{ij}(x)] u_{x_{i}x_{j}}(y) dy.
\end{align}
Then the following pointwise bound holds for all $k\in\NN$, $x\in B_{0}$,
\begin{align}
\label{Vu}
|V(x)u(x)|
   &  \leq C_{k} V(x) \int  _{B_{0}} {1\over {\big( 1+{{|x-y|}\over{\rho(x)}} \big)^{k} }} {1\over{|x-y|^{n-2}}}
   \Big( |Lu(y)| + \\
\notag
       & \qquad \qquad \qquad \qquad \qquad
        + \sum _{i,j=1}^{n} |a_{ij}(y)-a_{ij}(x)| 
|u_{x_{i}x_{j}}(y)| \Big) dy.
\end{align}

   Next let us rewrite (\ref{Vu}) as
\begin{align}
\label{9}
|V(x)u(x)| \leq C_{k} S_{k}(|Lu|)(x)+\sum _{i,j=1}^{n} S_{k,a_{ij}}(|u_{x_{i}x_{j}}|)(x),
\end{align}
where $S_{k}$ and $S_{k,a}$ are the integral operators defined as
\begin{align}
\label{Sk}
S_{k}f(x) = & V(x) \int  {1\over {\big( 1+{{|x-y|}\over{\rho(x)}} \big)^{k} }} {1\over{|x-y|^{n-2}}} f(y) dy,
\end{align}
and
\begin{align}
\label{Ska}
S_{k,a}f(x) = & V(x) \int  {1\over {\big( 1+{{|x-y|}\over{\rho(x)}} \big)^{k} }} {1\over{|x-y|^{n-2}}} |a(y)-a(x)| f(y) dy,
\end{align}
with $a\in L^{\infty}\cap VMO(\RR^{n})$, $k\in\NN$.

   We will prove in Theorem \ref{thm:Sk.sin.conmutador} below that for all $p\in (1,q]$ and $k$ large enough,
\begin{align}
\label{10}
\|S_{k}f\|_{L^{p}_{w}(B_{0})} \leq C \|f\|_{L^{p}_{w}(B_{0})}.
\end{align}
Also, we will prove in Theorem \ref{thm:Ska.con.conmutador} below that for each $\epsilon >0$ there exists $r_{0}>0$ depending on the VMO-modulus of the function $a$ such that
\begin{align}
\label{11}
\|S_{k,a}f\|_{L^{p}_{w}(B_{0})} \leq \epsilon \|f\|_{L^{p}_{w}(B_{0})}.
\end{align}

   Then, by (\ref{9}), (\ref{10}), (\ref{11}) and Theorem \ref{thm:bound.D2u} we have that for any $u\in C_{0}^{\infty}(B_{0})$ with $r_{0}$ small enough,
\begin{align*}
\|Vu\|_{L^{p}_{w}(B_{0})} & \leq C \|Lu\|_{L^{p}_{w}(B_{0})} + \epsilon \|u_{x_{i}x_{j}}\|_{L^{p}_{w}(B_{0})} \leq C \|Lu\|_{L^{p}_{w}(B_{0})} + C \epsilon \|Au\|_{L^{p}_{w}(B_{0})}  \\
 & \leq (C+C\epsilon) \|Lu\|_{L^{p}_{w}(B_{0})} + C \epsilon \|Vu\|_{L^{p}_{w}(B_{0})},
\end{align*}
and Theorem \ref{thm:potencial} follows.

\end{proof}

\subsection{Statement and proof of Theorems \ref{thm:Sk.sin.conmutador} and \ref{thm:Ska.con.conmutador}:}

   Following the lines of \cite{BBHV}, let us also consider the operators defined in $\Omega $

 \begin{align*}
S^{\ast}_{k}f(x)
   &=\int  {{V(y)}\over{\big( 1+ {{|x-y|}\over{\rho(y)}}\big)^{k}}} {1\over{|x-y|^{n-2}}} f(y)dy, \qquad x\in\Omega.  \qquad \mbox{ and}\\
S^{\ast}_{k,a}f(x) &=  \int {{V(y)}\over{\big( 1+ {{|x-y|}\over{\rho(y)}}\big)^{k}}} {1\over{|x-y|^{n-2}}} |a(y)-a(x)| f(y)dy,
\end{align*}
for each positive integer $k$ and $a\in VMO$.
These operators are the adjoint of the integral operator $S_{k}$ and $S_{k,a}$, given in  \eqref{Sk} and \eqref{Ska} respectively.

\begin{thm}
    \label{thm:Sk.sin.conmutador}
   Let $B_{0}$ be a ball in $\mathcal{F}_{\beta}$
  such that $10 B_{0}\in\mathcal{F}_{\beta}$.
  Then for $k$ large enough and $p \in [1,q]$,
  the operator $S_{k}$ is bounded on $L^{p}_{w}(B_{0})$, with $w\in A_{{{q-1}\over{q-p}}p ,\text{loc}}(\Omega)$.
  \end{thm}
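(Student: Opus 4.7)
The plan is to dualize: by standard considerations the bound $\|S_k f\|_{L^p_w(B_0)}\leq C\|f\|_{L^p_w(B_0)}$ is equivalent to the bound $\|S_k^* g\|_{L^{p'}_\sigma(B_0)}\leq C\|g\|_{L^{p'}_\sigma(B_0)}$ where $\sigma=w^{1-p'}$. Because $10B_0\in\mathcal F_\beta$, Lemma \ref{lemm:tecnico} guarantees that every ball of modest radius centered in $B_0$ still belongs to $\mathcal F_\beta$; hence the local $A$-condition on $w$ (and its dual $\sigma$) restricts to a genuine Muckenhoupt $A$-type condition for the balls used in the singular-integral analysis on the space of homogeneous type $(B_0,|\cdot|,dx)$.

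The kernel of $S_k^*$ is, after the substitution $\rho(y)\leftrightarrow\rho(x)$ allowed by \eqref{rhoxrhoy} at the cost of enlarging $k$, precisely the kernel $W(x,y)$ discussed immediately after \eqref{ellipticfundsolbound}; by Proposition 12 of \cite{BBHV} it satisfies the H\"ormander-type condition $H_1(q)$ in the first variable, namely \eqref{H1qcondition}. The heart of the argument is the sharp-function estimate
\begin{equation*}
M^{\sharp}_{B_0}(S_k^* f)(x)\leq C\,M_{q',B_0}f(x),\qquad M_{q'}f:=(M(|f|^{q'}))^{1/q'}.
\end{equation*}
For a generic ball $B\subset B_0$ containing $x$, one splits $f=f\chi_{\kappa B}+f\chi_{B_0\setminus\kappa B}$: the local piece is estimated in $L^q$ via H\"older's inequality together with the reverse H\"older hypothesis $V\in RH_q$, yielding the $M_{q',B_0}f(x)$ contribution; the tail piece, whose oscillation over $B$ one controls, is handled by the $H_1(q)$ estimate summed dyadically exactly as in the proof of Lemma \ref{lemm:parabolickernelh1q}.

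With this sharp-function estimate in hand, Lemma \ref{lemm:FS-PS.tipo.homog} applied on $(B_0,|\cdot|,dx)$ with the weight $\sigma\in A_\infty$ yields
\begin{equation*}
\|S_k^* f\|_{L^{p'}_\sigma(B_0)}\leq C\|M^{\sharp}_{B_0}(S_k^*f)\|_{L^{p'}_\sigma(B_0)}\leq C\|M_{q',B_0}f\|_{L^{p'}_\sigma(B_0)} = C\|M_{B_0}(|f|^{q'})\|_{L^{p'/q'}_\sigma(B_0)}^{1/q'}.
\end{equation*}
Since $p\leq q$ gives $p'/q'\geq 1$, one concludes by Theorem \ref{thm:acot.mxml.loc}: the hypothesis $w\in A_{\frac{q-1}{q-p}p,\text{loc}}$ translates, through the exponent arithmetic relating $w$ to $\sigma$, into the local Muckenhoupt condition required to make the Hardy--Littlewood maximal operator bounded on $L^{p'/q'}_\sigma(B_0)$, closing the estimate. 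The main obstacle is the sharp-function step: $W$ is not a classical Calder\'on--Zygmund kernel (no pointwise smoothness in the second variable), so the tail bound must rely directly on the $H_1(q)$ inequality rather than on kernel derivatives, and the local bound must carefully pair the $|x-y|^{2-n}$ singularity with the $L^q$-integrability of $V$ coming from the reverse H\"older hypothesis.
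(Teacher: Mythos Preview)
Your duality reduction and exponent arithmetic are correct, and the overall route via the sharp maximal function and Fefferman--Stein (Lemma \ref{lemm:FS-PS.tipo.homog}) can be made to work, but it is genuinely different from what the paper does. The paper bypasses the sharp function entirely and proves the stronger \emph{pointwise} domination
\[
S_k^* f(x)\le C\bigl(M_{\beta,\text{loc}}(|f|^{q'})(x)\bigr)^{1/q'},\qquad x\in B_0,
\]
by splitting the integral according to whether $|x-y|<\rho(x)$ or $|x-y|\ge\rho(x)$ and decomposing each region into dyadic annuli of scale $2^{\pm j}\rho(x)$; H\"older's inequality, the $RH_q$ hypothesis, inequality \eqref{13}, and the normalization \eqref{14} of the critical radius then control each annulus. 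The only additional care is to check, via Lemma \ref{lemm:tecnico}, that the relevant annuli intersecting $B_0$ lie in $\mathcal F_\beta$, so that the averages feed into $M_{\beta,\text{loc}}$. Once this pointwise bound is in hand, the weighted $L^{p'}_\sigma$ estimate is immediate from Theorem \ref{thm:acot.mxml.loc}; no Fefferman--Stein step is needed. Your sharp-function route is precisely the one the paper reserves for the \emph{commutator} $S_{k,a}$ in Theorem \ref{thm:abstracto}, where a direct pointwise bound is unavailable.

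One concrete gap: your treatment of the local piece $S_k^*(f\chi_{\kappa B})$ is underspecified. Saying it is ``estimated in $L^q$ via H\"older together with $V\in RH_q$'' does not by itself yield the required $M_{q'}f(x)$ bound on the average over $B$; pairing $|x-y|^{2-n}$ with $V\in L^q_{\text{loc}}$ directly runs into an integrability problem at the diagonal. What is actually needed is the unweighted $L^{q'}(\RR^n)$ boundedness of $S_k^*$ (Theorem 3.1 of \cite{Sh2} or Theorem 5 of \cite{BBHV}), exactly as the paper invokes for the analogous term $\mathbf{B}(y)$ in the proof of Theorem \ref{thm:abstracto}. This is not circular, but it means your argument imports an external $L^p$ result that the paper's direct pointwise proof avoids.
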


 \begin{proof}
    It is enough to prove that the adjoint operator $S^{\ast}_{k}$
 is bounded on $L^{p'}_{v}(B_{r_{0}})$, with $v = w^{-1/p-1}\in A_{p'/q',\text{loc}}(\Omega)$ for $p' \in [q',\infty]$, since $p'\over q'$ and ${{q-1}\over{q-p}}p$ are conjugate exponents.
 As we pointed out in section \ref{prelim:fundamentalsolutions}, we may replace $\rho(y)$ by $\rho(x)$ in the kernel of the operator $S^{\ast}_{k}$ (and maybe changing the integer $k$).
 Assume, without loss of generality, that $f\ge 0$.
 % (the kernel is positive, so if $f$ is not positive, we split it in the positive and negative parts and proceed -otra vez, esto es una nota muy tonta para mi)
 Also assume that $q>{n\over 2}$, which  can be done because of the fact that if $V$ satisfies the $RH_{q}$ property, then $V$ satisfies the $RH_{q+\epsilon}$ property for some $\epsilon >0$.

   We will prove the pointwise bound
\begin{equation*}
S^{\ast}_{k}f(x)\leq C (M_{\beta,\text{loc}}(|f|^{q'})(x))^{{1\over{q'}}} =:M_{q',\text{loc}},
\end{equation*}
  for $x\in B_{0}$, $f\in L^{p}_{w}(B_{0})$ and $f\ge 0$. If $p>q'$ the theorem then follows by the boundedness of the local-maximal function (Theorem \ref{thm:acot.mxml.loc}),
   and if $p=q'$ the theorem follows from the fact that $V$ satisfies the $RH_{q+\epsilon}$ property for some $\epsilon >0$.

   We have that
\begin{align*}
S^{\ast}_{k}f(x)
     & \leq  C \int _{|x-y|<\rho(x)}{{V(y)}\over{\big( 1+ {{|x-y|}\over{\rho(x)}}\big)^{k}}} {1\over{|x-y|^{n-2}}} \chi_{B_{0}}(y)f(y) dy \, + \\
& \qquad
   + C \int _{|x-y|\ge\rho(x)}{{V(y)}\over{\big( 1+ {{|x-y|}\over{\rho(x)}}\big)^{k}}} {1\over{|x-y|^{n-2}}} \chi_{B_{0}}(y)f(y) dy   \\
& \leq  C \int _{|x-y|<\rho(x)}{{V(y)}\over{|x-y|^{n-2}}} \chi_{B_{0}}(y)f(y) dy \, + \\
&  \qquad 
  + C \int _{|x-y|\ge\rho(x)} \Big({{\rho(x)}\over{|x-y|}} \Big)^{k} {{V(y)}\over{|x-y|^{n-2}}} \chi_{B_{0}}(y)f(y) dy = \mathbf{A}(x)+\mathbf{B}(x).\\
\end{align*}

 Let $x\in B_{0}=B(z_{0},r_{0})$.

   Let us first study $\mathbf{A}(x)$. Denote by $B_{j}$ the balls $B_{j}=B(x,2^{-j}\rho(x))$ and by $C_{j}$ the annuli defined as $C_{j}=\{ y: 2^{-(j+1)}\rho(x)<|x-y|\leq 2^{-j}\rho(x) \}=\overline{B_{j}}\backslash \overline{B_{j+1}}$, $j\in\NN_{0}$.

   If $\rho(x)\leq r_{0}$ then, by the Lemma \ref{lemm:tecnico} we have that $\rho(x)\leq  r_{0}<{\beta\over{10-\beta}}d(x,\Omega^{C})$. Then $B(x,\rho(x))\in\mathcal{F}_{\beta}$ and we proceed as in \cite{BBHV}. That is,
\begin{align*}
\mathbf{A}(x)  & \leq  C \sum _{j=0}^{\infty} {1\over{(2^{-j}\rho(x))^{n-2}}}
               \int _{C_{j}} V(y)f(y) dy \leq \\
& \leq C \sum _{j=0}^{\infty} (2^{-j}\rho(x))^{2}
    \bigg( {1\over{|B_{j}|}} \int _{B_{j}} V(y)^{q} dy  \bigg)^{1\over q}
    \bigg(  {1\over{|B_{j}|}} \int _{B_{j}} f(y)^{q'}dy \bigg)^{1\over q'}   \\
&\leq  C M_{q',\text{loc}}(f)(x)\sum _{j=0}^{\infty} (2^{-j}\rho(x))^{2}
   \bigg( {1\over{|B_{j}|}} \int _{B_{j}} V(y) dy  \bigg),
\end{align*}
by H\"{o}lder inequality, $RH_{q}$ condition and the definition of local Maximal function of exponent $q'$.

   A slight modification of the argument is needed in the case $\rho(x) > r_{0}$: there exists $j_{0}\in\NN_{0}$ such that $2^{-(j_{0}+1)}\rho(x)< r_{0} \leq 2^{-j_{0}}\rho(x)$. Let $y\in C_{j}$, for $j\leq j_{0}-2$. Then,
\begin{equation*}
2^{-(j+1)}\rho(x)<|x-y|\leq 2^{-j}\rho(x),
\end{equation*}
 and also
\begin{equation*}
2r_{0}< 2^{-j_{0}+1}\rho(x)\leq 2^{-(j+1)}\rho(x),
\end{equation*}
 from where
\begin{equation*}
2r_{0}\leq 2^{-(j+1)}\rho(x)<|x-y|\leq |x-z_{0}|+|z_{0}-y|<r_{0} + |z_{0}-y|.
\end{equation*}
 Therefore $|z_{0}-y|>r_{0}$,
 and thus $B_{0}\cap C_{j}=\emptyset$ if $j\leq j_{0}-2$.
 Then, % This means that the series "`starts"' from $j_{0}-1$:
\begin{align*}
\mathbf{A}(x)
&  \leq C \sum _{j=j_{0}-1}^{\infty} {1\over{(2^{-j}\rho(x))^{n-2}}} \int _{B_{0}\cap C_{j}} V(y)f(y) \, dy  \\
&\leq  C \sum _{j=j_{0}-1}^{\infty} (2^{-j}\rho(x))^{2} \big( {1\over{|B_{j}|}} \int _{B_{j}} V(y)^{q} dy  \big)^{1\over q} \bigg(  {1\over{|B_{j}|}} \int _{B_{j}} f(y)^{q'} \, dy \bigg)^{1\over q'},
\end{align*}
by H\"{o}lder inequality and the fact that
$C_{j}\subset \overline{B_{j}}$.
Since $B_{j}=B(x,2^{-j}\rho(x))\subset B(x,4r_{0}) \subset B(z_{0},5r_{0})$ and $B(z_{0},10r_{0})\in\mathcal{F}_{\beta}$, we have that $B_{j}\in\mathcal{F}_{\beta}$, $j\ge j_{0}-1$,
in view of Lemma \ref{lemm:tecnico}.

   Then, applying the $RH_{q}$ condition on $V$, we obtain
\begin{equation*}
\mathbf{A}(x)\leq C M_{q',\text{loc}}(f)(x)\sum _{j=j_{0}-1}^{\infty} (2^{-j}\rho(x))^{2}\bigg( {1\over{|B_{j}|}} \int _{B_{j}} V(y) dy  \bigg).
\end{equation*}
 Finally, we follow the same steps as in \cite{BBHV} to conclude that
\begin{equation*}
\mathbf{A}(x)\leq C M_{q',\text{loc}}(f)(x),
\end{equation*}
namely, choose $R=\rho(x)$ and $r=2^{-j}\rho(x)$ in \ref{13}, and use \ref{14} from section \ref{prelim:propertiesofV}, when needed.

   Next we study $\mathbf{B}(x)$.

   This time, if $\rho(x) >2r_{0}$ we have that $\mathbf{B}(x)=0$.

   The other case goes as follows: now consider the balls 
   $B_{j}=B(x,2^{j}\rho(x))$ 
   and the annuli 
   $C_{j}=\{y:2^{j-1}\rho(x)<|x-y|\leq 2^{j}\rho(x)\} \subset \overline{B_{j}}\backslash\overline{B_{j-1}}$,
    for $j\in\NN_{0}$. 
    There exists 
    $j_{0}\in\NN_{0}$ such that $2^{j_{0}-1}\rho(x)<r_{0}\leq 2^{j_{0}}\rho(x)$. 
    Consider $y\in C_{j}$ for $j\ge j_{0}+2$. Then,
\begin{equation*}
2^{j-1}\rho(x)<|x-y|\leq 2^{j}\rho(x),
\end{equation*}
 and since $2r_{0}\leq 2^{j_{0}+1}\rho(x)\leq 2^{j-1}\rho(x)$, we have that
\begin{equation*}
2r_{0}<|x-y|\leq |x-z_{0}|+|z_{0}-y|<r_{0}+|z_{0}-y|.
\end{equation*}
 Therefore, $|z_{0}-y|>r_{0}$ and we conclude that $B_{0}\cap C_{j}=\emptyset$, for $j\ge j_{0}+2$. Then, % This means that the series is a finite sum which ends at $j_{0}+1$:
\begin{align*}
\mathbf{B}(x) &
  \leq   C \sum _{j=0}^{j_{0}+1} {{2^{-jk}}\over{(2^{j}\rho(x))^{n-2}}} \int _{B_{0}\cap C_{j}} V(y)f(y) dy \\
 & \leq  C \sum _{j=0}^{j_{0}+1} {{(2^{j}\rho(x))^{2}}\over{2^{jk}}} \Big( {1\over{|B_{j}|}} \int _{B_{j}} V(y)^{q}dy\Big)^{1\over q} \Big( {1\over{|B_{j}|}} \int _{B_{j}} f(y)^{q'}dy\Big)^{1\over q'},
\end{align*}
by H\"older inequality and the fact that $C_{j}\subset B_{j}$.
Then, for $0\leq j\leq j_{0}+1$,
we have that $B(x,2^{j}\rho(x))\subset B(x,4r_{0}) \subset B(z_{0},5r_{0})$.
Again, since $ B(z_{0},10r_{0})\in\mathcal{F}_{\beta}$, we get $B_{j}\in\mathcal{F}_{\beta}$. Thus, from the $RH_{q}$ condition
\begin{equation*}
\mathbf{B}(x)\leq C M_{q',\text{loc}}(f)(x) \sum _{j=0}^{j_{0}+1} {{(2^{j}\rho(x))^{2}}\over{2^{jk}}}  \Big( {1\over{|B_{j}|}} \int _{B_{j}} V(y)dy\Big).
\end{equation*}

 Now we continue the proof given in \cite{BBHV}, that is, use again \ref{13} and \ref{14}, to conclude that

\begin{equation*}
\mathbf{B}(x)\leq C M_{q',\text{loc}}(f)(x).
\end{equation*}

\end{proof}

\begin{thm}\label{thm:Ska.con.conmutador}
   Let $p \in (1,q]$ and $w\in A_{{{q-1}\over{q-p}}p,\text{loc}}(\Omega)$. Then, given $\epsilon >0$ there exist $r_{0}>0$, depending on the $VMO-$modulus of $a$, such that for any
 ball $B_{0}= B(z_0, r_0)$  in  $\Omega$ with $10 B_{0}\in\mathcal{F}_{\beta}$, the inequality
\begin{align*}
  {\|S_{k,a}f\|_{L^{p}_{w}(B_{0})} \leq \epsilon \|f\|_{L^{p}_{w}(B_	{0})} }
\end{align*}
holds for all $f\in L^{p}_{w}(B_{0})$ and $k$ large enough.
\end{thm}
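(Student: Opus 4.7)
The strategy is to parallel the proof of Theorem \ref{thm:Sk.sin.conmutador}. By duality, the desired bound is equivalent to $\|S^{\ast}_{k,a}f\|_{L^{p'}_{v}(B_{0})}\leq \epsilon \|f\|_{L^{p'}_{v}(B_{0})}$, where $v=w^{-1/(p-1)}\in A_{p'/q',\text{loc}}(\Omega)$ and $p'\in[q',\infty)$, exactly because $p'/q'$ and $\frac{q-1}{q-p}p$ are conjugate. After replacing $\rho(y)$ by $\rho(x)$ in the kernel of $S^{\ast}_{k,a}$ (see Section \ref{prelim:propertiesofV}) and assuming $f\ge 0$, the goal is to establish the pointwise estimate
\begin{equation*}
S^{\ast}_{k,a}f(x)\leq C\,\tilde{\eta}_{a}(r_{0})\, M_{q',\text{loc}}f(x),\qquad x\in B_{0},
\end{equation*}
where $\tilde{\eta}_{a}(r)$ is a ``higher-integrability VMO modulus'' of $a$: since $a\in\text{VMO}\cap L^{\infty}$, John--Nirenberg combined with the VMO property ensures that for every $s\in [1,\infty)$,
\begin{equation*}
\tilde{\eta}_{a}(r):=\sup_{B:r_{B}\leq r}\Bigl(\tfrac{1}{|B|}\textstyle\int_{B}|a-a_{B}|^{s}\,dy\Bigr)^{1/s}\longrightarrow 0\quad\text{as }r\to 0^{+}.
\end{equation*}

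The dyadic decomposition is identical to the one in Theorem \ref{thm:Sk.sin.conmutador}: split into a near region $|x-y|<\rho(x)$ and a far region $|x-y|\ge \rho(x)$, each further decomposed into the annuli $C_{j}\subset B_{j}$ centered at $x$. The only new feature is the factor $|a(y)-a(x)|$, which on each annulus is bounded by
\begin{equation*}
|a(y)-a(x)|\leq |a(y)-a_{B_{j}}|+|a_{B_{j}}-a(x)|.
\end{equation*}
For the first summand one applies H\"older's inequality with \emph{three} factors, using the $RH_{q}$ condition on $V$ and the higher-integrability VMO bound above on $a-a_{B_{j}}$; the remaining factor involving $f$ is absorbed into $M_{q',\text{loc}}f(x)$ exactly as before. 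For the second summand $|a_{B_{j}}-a(x)|$ one telescopes across the chain of nested balls shrinking from $B_{j}$ toward $x$ (equivalently, expanding toward $B(x,r_{0})$), each step contributing at most $C\tilde{\eta}_{a}(r_{0})$; because the scales are geometric, the resulting sum remains controlled by $C\tilde{\eta}_{a}(r_{0})$ times the same annular factors that were summable in the proof of Theorem \ref{thm:Sk.sin.conmutador}. That all intermediate balls lie in $\mathcal{F}_{\beta}$ is guaranteed by Lemma \ref{lemm:tecnico} applied to $10B_{0}\in\mathcal{F}_{\beta}$, so $\tilde{\eta}_{a}(r_{B_{j}})\leq \tilde{\eta}_{a}(r_{0})$ is available uniformly.

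Once the pointwise bound is in place, Theorem \ref{thm:acot.mxml.loc} applied to $M_{q',\text{loc}}$ on $L^{p'}_{v}(B_{0})$ gives
\begin{equation*}
\|S^{\ast}_{k,a}f\|_{L^{p'}_{v}(B_{0})}\leq C\,\tilde{\eta}_{a}(r_{0})\,\|f\|_{L^{p'}_{v}(B_{0})},
\end{equation*}
and choosing $r_{0}$ small enough that $C\tilde{\eta}_{a}(r_{0})<\epsilon$, which is possible precisely because $a\in\text{VMO}$, closes the argument. The main technical obstacle is the chaining control of $|a_{B_{j}}-a(x)|$: unlike Theorem \ref{thm:Sk.sin.conmutador} where the kernel summation was clean, here one has to carry out the telescoping of consecutive BMO oscillations while (i) retaining the smallness coming from $\tilde{\eta}_{a}$ rather than merely the bounded $\|a\|_{\text{BMO}}$, and (ii) verifying through Lemma \ref{lemm:tecnico} that every ball along the chain belongs to $\mathcal{F}_{\beta}$, so that the local VMO oscillation is uniformly applicable; without the assumption $10B_{0}\in\mathcal{F}_{\beta}$ this step would break.
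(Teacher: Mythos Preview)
Your duality reduction and the handling of the first summand $|a(y)-a_{B_{j}}|$ via a three-exponent H\"older inequality (using $V\in RH_{q+\epsilon}$ to make room for the extra factor) are fine. The gap is in the second summand. The quantity $|a_{B_{j}}-a(x)|$ cannot be controlled by $C\,\tilde{\eta}_{a}(r_{0})$ uniformly in $x\in B_{0}$: membership in VMO guarantees only that averages over nearby scales are close, not that averages are uniformly close to pointwise values. Chaining from $B_{j}$ \emph{toward} $x$ requires infinitely many steps, and the resulting bound $|a_{B_{j}}-a(x)|\le C\sum_{l\ge j}\tilde{\eta}_{a}(2^{-l}\rho(x))$ need not be finite in terms of $\tilde{\eta}_{a}(r_{0})$, since VMO imposes no rate of decay on $\tilde{\eta}_{a}$. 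Chaining in the other direction, toward the largest ball $B(x,\rho(x))$ (or $B(x,r_{0})$), leaves the residual term $|a_{B(x,\rho(x))}-a(x)|$, which is again a pointwise quantity not dominated by $\tilde{\eta}_{a}(r_{0})$ uniformly in $x$. Your parenthetical ``equivalently'' conflates these two incompatible chainings, and neither one delivers the claimed pointwise estimate $S^{\ast}_{k,a}f(x)\le C\,\tilde{\eta}_{a}(r_{0})\,M_{q',\text{loc}}f(x)$.

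The paper circumvents this by not attempting a pointwise bound at all. It first proves an abstract weighted commutator estimate (Theorem~\ref{thm:abstracto}) of the form $\|T_{b}f\|_{L^{p'}_{v}(B_{0})}\le C\|b\|_{BMO}\|f\|_{L^{p'}_{v}(B_{0})}$ via the Fefferman--Stein sharp maximal inequality of Lemma~\ref{lemm:FS-PS.tipo.homog}; in that argument the splitting is $|b(y)-b_{B}|$ rather than $|b(y)-b(x)|$, so only BMO oscillations over balls enter and John--Nirenberg suffices. The VMO smallness is then obtained \emph{a posteriori} by the Chiarenza--Frasca--Longo approximation: write $a=(a-\phi)+\phi$ with $\phi$ uniformly continuous and $\|a-\phi\|_{BMO}<\epsilon$, and on $B_{0}$ replace $\phi$ by a function $\psi$ with $\|\psi\|_{BMO}\le\omega_{\phi}(2r_{0})$, which is small for $r_{0}$ small. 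This two-step route (BMO bound plus VMO approximation) is precisely what avoids the pointwise obstruction you ran into.
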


   Now we can write
\begin{equation*}
S^{\ast}_{k,a}f(x) = \int |a(y)-a(x)|W(x,y)f(y)dy,
\end{equation*}
where $W(x,y)$ is the kernel given in Lemma \ref{lemm:parabolickernelh1q}  which satisfies the $H_{1}(q)$ condition,
      and we deduce  Theorem \ref{thm:Ska.con.conmutador}, from the following abstract result:

   \begin{thm}
  \label{thm:abstracto}
  Let $w\in A_{p/q',\text{loc}}(\Lambda)$ with $q'<p<\infty$ and $\Lambda= \Omega$ or $\Omega_T$. Let $B_0$ be a ball in $\Lambda$ such that $10 B_{0}\in\mathcal{F}_{\beta}$. Assume that
   $W(x,y)$ is a non-negative kernel satisfying the $H_{1}(q)$ condition on the first variable, for some $q>1$ such that the operator
\begin{equation*}
Tf(x)=\int W(x,y) f(y) dy
\end{equation*}
 is bounded on $L^{p}_{w}(B_{0})$. Then for $b\in BMO(\RR^n) $ or $BMO(\RR^{n+1})$ the operator``positive commutator''
\begin{equation*}
T_{b}f(x)=\int _{B_{0}} |b(x)-b(y)| W(x,y)f(y)dy
\end{equation*}
 is bounded on $L^{p}_{w}(B_{0})$, and
\begin{equation*}
\|T_{b}f\|_{L^{p}_{w}(B_{0})}\leq C \|b\|_{BMO} \|f\|_{L^{p}_{w}(B_{0})}.
\end{equation*}

\end{thm}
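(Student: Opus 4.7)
The plan is to apply the sharp maximal / Fefferman--Stein machinery of Lemma \ref{lemm:FS-PS.tipo.homog} with $X$ taken to be the space of homogeneous type $(B_0, d, dx)$; note that the hypothesis $10 B_0 \in \mathcal{F}_\beta$ ensures, as already observed in the proof of Theorem \ref{thm:bound.D2u}, that $w|_{B_0}$ belongs to a global $A_\infty$ class on $B_0$, so the Fefferman--Stein lemma is available. The key intermediate step will be to establish the pointwise bound
\begin{equation*}
M^{\sharp}_{B_0}(T_b f)(x) \leq C\, \|b\|_{BMO} \bigl( M_{\text{loc}}(|Tf|^s)(x)^{1/s} + M_{\text{loc}}(|f|^s)(x)^{1/s} \bigr)
\end{equation*}
for some exponent $s$ with $1 < s \leq q'$. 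Once this is in hand, Lemma \ref{lemm:FS-PS.tipo.homog} transfers the estimate to $L^p_w(B_0)$, and the $L^{p/s}_w$-boundedness of the local maximal operator (Theorem \ref{thm:acot.mxml.loc}), which is legitimate because $w\in A_{p/q',\text{loc}} \subset A_{p/s,\text{loc}}$ (since $p/s \geq p/q'$), combined with the hypothesized $L^p_w$-boundedness of $T$, closes the argument.

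To prove the pointwise sharp-maximal bound I will fix $x \in B_0$ and a ball $P = B(z_P, r_P)$ with $x \in P$. I decompose $f = f_1 + f_2$ with $f_1 = f\chi_{2P \cap B_0}$, write $b_* := b_{2P \cap B_0}$, and take the subtracted constant $c_P := T_b f_2(z_P)$. Using $|b(z) - b(y)| \leq |b(z) - b_*| + |b(y) - b_*|$ together with positivity of $W$, for $z \in P$ the difference $T_b f(z) - c_P$ is dominated by
\begin{equation*}
|b(z) - b_*|\, Tf(z) + T\bigl(|b - b_*| f_1\bigr)(z) + \bigl|T_b f_2(z) - T_b f_2(z_P)\bigr|.
\end{equation*}
Averaging over $P$, the first term is controlled by $C \|b\|_{BMO} M_{\text{loc}}(|Tf|^s)(x)^{1/s}$ by H\"{o}lder with exponents $s'$ and $s$ and John--Nirenberg, and the second by $C \|b\|_{BMO} M_{\text{loc}}(|f|^s)(x)^{1/s}$ using the $L^p$-type boundedness of $T$ restricted to $2P$ together with a generalized H\"{o}lder / John--Nirenberg estimate on $|b - b_*| f_1$. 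These are standard and follow the Perez-type commutator template.

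The technically demanding piece, and the principal obstacle, is the non-local difference. Writing
\begin{align*}
|T_b f_2(z) - T_b f_2(z_P)| &\leq \int_{B_0 \setminus 2P} |b(z) - b(y)|\,|W(z,y) - W(z_P,y)|\, f(y)\, dy \\
& \quad + |b(z) - b(z_P)| \int_{B_0 \setminus 2P} W(z_P, y)\, f(y)\, dy,
\end{align*}
I will decompose $B_0 \setminus 2P$ into the dyadic annuli $C_j = \{2^j r_P \leq |z_P - y| < 2^{j+1} r_P\}$ and apply H\"{o}lder's inequality with exponent $q$ on each one. The $H_1(q)$ condition on $W$ produces, after normalization by $|B(z_P, 2^j r_P)|^{1/q'}$, a factor of order $C/j$ on the $j$-th annulus, which is exactly what is needed to absorb the linear-in-$j$ growth $|b_{2^{j+1}P\cap B_0} - b_*| \leq C j \|b\|_{BMO}$ supplied by John--Nirenberg across dyadic scales. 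Summing in $j$ yields a convergent series bounded by $C \|b\|_{BMO} M_{\text{loc}}(|f|^s)(x)^{1/s}$. This interaction between the $j$-weight in the strong H\"{o}rmander condition $H_1(q)$ and the logarithmic growth of BMO averages is precisely the reason $H_1(q)$ (rather than a weaker H\"{o}rmander condition) is imposed, and the computation follows the classical commutator theory for Calder\'{o}n--Zygmund operators as developed in \cite{BBHV} and \cite{PS}.
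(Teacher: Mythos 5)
Your overall plan matches the paper's: pass through the Fefferman--Stein inequality of Lemma \ref{lemm:FS-PS.tipo.homog} via a pointwise sharp-maximal estimate, split $f=f_1+f_2$, and let the linear weight $j$ in the $H_1(q)$ condition absorb the $Cj\|b\|_{BMO}$ growth of $|b_B - b_{B_j}|$ from John--Nirenberg. Two technical points, however, do not hold as written.

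First, the exponent range. You claim the pointwise bound holds for some $s$ with $1<s\leq q'$, and use $s\leq q'$ to get the inclusion $A_{p/q',\text{loc}}\subset A_{p/s,\text{loc}}$. But on each annulus $C_j$ the non-local term has \emph{three} factors --- $|b-b_B|$, the kernel difference $|W(y,\cdot)-W(x_B,\cdot)|$, and $f$ --- and the $H_1(q)$ hypothesis only controls the kernel difference in $L^q$. A three-way H\"older with exponents $\gamma,q,s$ requires $\tfrac1\gamma+\tfrac1q+\tfrac1s=1$, hence $\tfrac1\gamma=\tfrac1{q'}-\tfrac1s>0$, forcing $s>q'$. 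This is exactly what the paper uses (it picks $\gamma$ with $\tfrac1\gamma+\tfrac1q+\tfrac1s=1$). To then apply Theorem \ref{thm:acot.mxml.loc} on $L^{p/s}_w$ with $s>q'$, one must invoke the self-improvement (openness) of the local $A_p$ class: $w\in A_{p/q',\text{loc}}$ implies $w\in A_{(p/q')-\varepsilon,\text{loc}}$ for some $\varepsilon>0$, and then one takes $s$ slightly larger than $q'$ so that $p/s>(p/q')-\varepsilon$. Your argument inverts the inequality and would leave the $H_1(q)$ step without enough room.

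Second, the choice of subtracted constant. You take $c_P:=T_bf_2(z_P)$ and, after the reverse-triangle estimate, obtain a term $|b(z)-b(z_P)|\int W(z_P,y)f_2(y)\,dy$. Averaging this over $z\in P$ produces $\frac{1}{|P|}\int_P|b(z)-b(z_P)|\,dz$, which is \emph{not} controlled by $\|b\|_{BMO}$: it is bounded below by $|b_P - b(z_P)|$, which can be arbitrarily large for a BMO function at a fixed point $z_P$. The paper sidesteps this by choosing $C_B = T(|b-b_B|f_1\chi_{\widetilde B^c}+\cdots)(x_B)$ --- i.e., the positive \emph{non-commutator} operator $T$ applied to $|b-b_B|f_2$ at the center --- so that the resulting non-local term is $\int|W(y,z)-W(x_B,z)|\,|b(z)-b_B|\,f_2(z)\,dz$, involving only the average $b_B$ and the integrated variable $b(z)$ (no pointwise evaluation $b(x_B)$ or $b(y)$ enters outside the well-behaved term $\mathbf A(y)=|b(y)-b_B|Tf(y)$). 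Your decomposition can be repaired (e.g.\ by further averaging the constant over $z_P\in P$), but as written the step ``this is standard'' hides a genuine gap.
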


\begin{proof}

    In view of Lemma \ref{lemm:FS-PS.tipo.homog}, we will prove the following pointwise inequality: for $s>q'$ there exists a constant $C>0$ independent of $b$ and $f$ such that
\begin{align}\label{18}
\ M_{B_0}^{\sharp} (T_{b}f )(x)\leq C \|b\|_{BMO} [ M_{s,\text{loc}}(Tf)(x)+M_{s,\text{loc}}(f)(x)],
\end{align}
for all $x\in B_{0}$, where

%   Here we define the local sharp maximal as usual

\begin{equation*}
M^{\sharp}_{B_0}f(x)= \sup _{ x\in B,  x_B \in B_0} \inf _{c>0} {1\over{|B\cap B_0|}} \int _{B\cap B_0} |f(y)-c| dy  + {1\over{|B_{0}|}}\int _{B_{0}}|f(y)|dy.
\end{equation*}

Fixed $x\in B_0$ and choose $B=B(x_{B },r_B)$ with $x\in B$ and $x_B\in B_0$. Thus $ |B| \simeq |B\cap B_{0}|$. Let $\widetilde{B}=2B=B(x_{B},2r_{B})$. From Lemma \ref{lemm:tecnico} it follows that
$\widetilde{B}\in \mathcal{F}_{\beta}$. Now for a positive function $f$ let us split it into the sum $f=f_{1}+f_{2}$,
where $f_{1}=f\chi_{\widetilde{B}}$ and $f_{2}=f\chi_{\widetilde{B}^{C}}$.

   Proceeding as in \cite{BBHV}, we obtain the expression
\begin{align*}
|T_{b}f(y) - C_{B}| 
   & \leq |b(y)-b_{B}|  \,
        Tf(y) + T(|b-b_{B}|f_{1})(y) \\
    & \qquad\qquad\qquad
           + \int _{B_{0}} |W(y,z)-W(x_{B},z)| 
    |b(z)-b_{B}| f_{2}(z) dz  \\
 &= \mathbf{A}(y) + \mathbf{B}(y) + \mathbf{C}(y)
\end{align*}
for any $y\in B$, where $c_{B}=T(|b-b_{B}|f_{2})(x_{B})=\int _{B_{0}} |b(z)-b_{B}| W(x_{B},z) f_{2}(y)dz$.

   Let us first bound $\mathbf{A}(y)$. Taking average over $B\cap B_0$, for $s>q'$,
\begin{align*}
Av(\mathbf{A})
   &  =  {1\over{|B\cap B_0|}} \int _{B\cap B_0} |b(y)-b_{B}|Tf(y)dy\\
      & \leq C\Big( {1\over{|B|}} \int _{B} |b(y)-b_{B}|^{s'}dy \Big)^{1\over s'}
           \Big( {1\over{|B|}} \int _{B} \chi_ {B_0}|Tf(y)|^{s} dy \Big)^{1\over s} \leq \\
& \leq  C\|b\|_{BMO} M_{s,\text{loc}}(\chi_ {B_0} Tf)(x),
\end{align*}

   Choose now $\gamma$ such that $s>\gamma>q'$. The computations for the average of $\mathbf{B}$ from \cite{BBHV} also hold in our case:
\begin{align*}
Av(\mathbf{B})
   & \leq  {C\over{|B|}}\int _{B}\chi_ {B_0}T(|b-b_{B}|f_{1})(x)dx
      \leq C\Big( {1\over{|B|}}\int _{B}T(|b-b_{B}|f_{1})^{\gamma}(x)dx \Big)^{{1\over{\gamma}}} \leq \\
& \leq C \Big( {1\over{|B|}}\int _{\widetilde{B}} |b(x)-b_{B}|^{\gamma} |f_{1}(x)|^{\gamma} dx \Big)^{{1\over{\gamma}}},
\end{align*}
since $T$ is bounded on $L^{p}(\RR^n)$(see Theorem 3.1 in \cite{Sh2} and Theorem 5 in \cite{BBHV}).
Then, by H\"{o}lder's inequality,
\begin{align*}
Av(\mathbf{B}) & \leq C \bigg( {1\over{|\widetilde{B}|}} \int _{\widetilde{B}} |f(x)|^{s} dx \bigg)^{1\over s}
   \bigg( {1\over{|B|}} \int _{\widetilde{B}} |b(x)-b_{B}|^{\gamma  ( {s\over{\gamma}} )'} \bigg)^{1\over{\gamma ( {s\over{\gamma}} )'}}  \leq \\
   & \leq C \bigg( {1\over{|\widetilde{B}|}} \int _{\widetilde{B}} |f(x)|^{s} dx \bigg)^{1\over s} \bigg[ \bigg( {1\over{|\widetilde{B}|}} \int _{\widetilde{B}} |b(x)-b_{\widetilde{B}}|^{\gamma ( {s\over{\gamma}} )'} \bigg)^{1\over{\gamma ( {s\over{\gamma}} )'}} + |b_{B}-b_{\widetilde{B}}|  \bigg]  \leq \\
& \leq C \|b\|_{BMO} M_{s,\text{loc}}(f)(x),
\end{align*}
because $|b_{B}-b_{\widetilde{B}}|\leq C \|b\|_{BMO}$ and the John-Nirenberg inequality.

   Next we choose $\gamma$ such that ${1\over\gamma}+{1\over q} +{1\over s} =1$, and we define the balls $B_{j}=B(x_{B},2^{j}r)$ and the annuli $C_{j}=\{z:2^{j-1}r<|x_{B}-z|\leq 2^{j}r\}$. Like in the proof of theorem \ref{thm:Sk.sin.conmutador}, there exists $j_{0}\in\NN_{0}$ such that $C_{j_{0}}\cap B_{0} \neq \emptyset$ and $C_{j_{0}+1}\cap B_{0} = \emptyset$, then by Lemma \ref{lemm:tecnico}, we have that $B_{j}\in\mathcal{F}_{\beta}$ for $j\leq j_{0}$.  Then, for any $y\in B$, we have that
\begin{align*}
\mathbf{C}(y)
  & = \int _{\widetilde{B}^{C}\cap B_{0}} 
  |b(z)-b_{B}| 
  |W(y,z)-W(x_{B},z)| f(z) dz \\
 &  \leq   \sum _{j=2}^{j_{0}} 
   \int _{C_{j}\cap B_{0}} |b(z)-b_{B}|
    |W(y,z)-W(x_{B},z)| f(z)dz \leq \\
& \leq  C \sum _{j=2}^{j_{0}}
 \bigg( {1\over{|B_{j}|}} \int _{B_{j}} |b(z)-b_{B}|^{\gamma} dz \bigg)^{1\over{\gamma}}
\\
&\qquad\qquad\quad
  \bigg( {1\over{|B_{j}|}} \int _{C_{j}} |W(y,z)-W (x_{B},z)|^{q}d \bigg)^{1\over q} \bigg( {1\over{|B_{j}|}} \int _{B_{j}} |\chi_{B_{0}}f(z)|^{s} dz \bigg)^{1\over s}  \leq \\
 &\leq  C \sum _{j=2}^{j_{0}} |B_j| \bigg[ \bigg( {1\over{|B_{j}|}} \int _{B_{j}} |b(z)-b_{B_{j}}|^{\gamma} dz \bigg)^{1\over{\gamma}} + |b_{B}-b_{B_{j}}| \bigg]
 \\
 &\qquad\qquad\quad
 \bigg( {1\over{|B_{j}|}} \int _{C_{j}} |W(y,z)-W(x_{B},z)|^{q}dz \bigg)^{1\over q} M_{s,\text{loc}}(f)(x) \leq \\
& \leq  C \|b\|_{BMO} M_{s,\text{loc}}(f)(x) \sum _{j=2}^{\infty} (2^{j}r)^{n\over{q'}}j \bigg( \int _{C_{j}} |W(y,z)-W(x_{B},z)|^{q}dz \bigg)^{1\over q} \leq \\
&\leq  C \|b\|_{BMO} M_{s,\text{loc}}(f)(x),
\end{align*}
because of the $H_{1}(q)$ condition, the John-Nirenberg inequality
 and the fact that $|b_{B}-b_{B_{j}}|\leq Cj\|b\|_{BMO}$.
 Then putting together all the above estimates, we get
\begin{equation*}
 \sup _{\substack{x\in B  \\  x_B \in B_0}}
    \inf _{c>0} 
   {1\over{|B\cap B_0|}}
      \int _{B\cap B_0} |f(y)-c| \,  dy 
      \leq C \|b\|_{BMO} \Big(M_{s,\text{loc}}(f)(x) + M_{s,\text{loc}}(Tf)(x)\Big).
\end{equation*}
On the other hand, proceeding as above we also have
\begin{align*}
{1\over{|B_{0}|}}\int _{B_{0}}|T_{b}f(y)|dy
 &\leq  {1\over{|B_{0}|}}
   \int _{B_{0}}(|b(y)-b_{B_0}|Tf(y) + T(|b-b_{B_0}|f)(y)) \, dy\\
  & \leq  C \|b\|_{BMO} \Big(M_{s,\text{loc}}(\chi_{B_0} Tf)(x) + M_{s,\text{loc}}(f)(x)\Big)
\end{align*}

   Thus we obtain \ref{18}, which together with Lemma \ref{lemm:FS-PS.tipo.homog} and Theorem \ref{thm:acot.mxml.loc} imply the Theorem.

\end{proof}

\begin{proof}[Proof of Theorem \ref{thm:Ska.con.conmutador}]
  By duality, we prove the theorem for the adjoint operator $S^{\ast}_{k,a}$ with $v = w^{-1/p-1}\in A_{p'/q',\text{loc}}(\Omega)$ for $p' \in [q',\infty)$.

Applying Theorem \ref{thm:abstracto} to the operator $S^{\ast}_{k,a}$ for $k$ large enough we get that if $q'<p'<\infty$,
\begin{equation*}
\|S^{\ast}_{k,a}f\|_{L^{p'}_{v}(B_{0})} \leq C \|a\|_{BMO} \|f\|_{L^{p'}_{v}(B_{0})},
\end{equation*}
 and if $p'=q'$ we use again that $V\in RH_{q+\epsilon}$.

   Since $a\in VMO(\RR^{n} )$, there exists a bounded uniformly continuous function $\phi$ in $\RR^{n}$ such that \\
$\|a-\phi\|_{BMO}<\epsilon$. Also, for $z_{0}\in \Omega$ and $r_0 >0$ there exists a uniformly continuous function $\psi$ such that $ \psi = \phi$ in $B _0 =B(z_0,r_0)$ and
\begin{equation*}
\|\psi\|_{BMO}\leq \omega_\phi (2r_0),
\end{equation*}
where $\omega_\phi (2r_0)$ denote the modulus of continuity of $\phi$ (see \cite{CFL1}). Choosing $r_0$ small enough, for all $f \in L^{p}_{v}(B_{0})$, we have
\begin{align*}
\|S^{\ast}_{k,a}f\|_{L^{p'}_{v}(B_{0})}
   &\leq  \|S^{\ast}_{k,a-\phi}f\|_{L^{p'}_{v}(B_{0})} + \|S^{\ast}_{k,\phi}f\|_{L^{p'}_{v}(B_{0})}
   \\
   &=
   \|S^{\ast}_{k,a-\phi}f\|_{L^{p'}_{v}(B_{0})} + \|S^{\ast}_{k,\psi}f\|_{L^{p'}_{v}(B_{0})}\\
& \leq C \|a-\phi\|_{BMO} \|f\|_{L^{p'}_{v}(B_{0})} + C \|\psi\|_{BMO} \|f\|_{L^{p'}_{v}(B_{0})} \\
 & \leq C \epsilon \|f\|_{L^{p'}_{v}(B_{0})},
\end{align*}
thus, the Theorem follows.

\end{proof}

\section{ Previous results for the proof of the Theorem \ref{thm:principalP}}
\label{previousP}

   We now present the parabolic-interpolation Theorem, which makes use of the Theorem \ref{thm:acot.mxml.loc}.

\begin{thm}% (Interpolation)
        \label{thm:bound.epsilonP}
   Let $1<p<\infty$ 
   and $w\in A_{p,\text{loc}}(\Omega_T)$.
 For any function $u\in W^{k,p}_{\delta,w}(\Omega_T)$, 
 any  $j$, $1\leq j\leq k-1$, 
 and $\gamma$ such that $|\gamma|=j$, 
 we have that
 \begin{equation}\label{interin}
 \|\delta ^jD^\gamma u\|_{L^p_w(\Omega_T )}\leq C(\epsilon ^{-j}\|u\|_{L^p_w(\Omega_T )}+\epsilon ^{k-j}\|\delta ^k D^ku\|_{L^p_w(\Omega_T )}).
 \end{equation}
for any $0<\epsilon<1$ and $C$ independent of $u$ and $\epsilon$ with $\delta (x',t)=\min\{1,d((x',t),\Omega_{T}^C )\}$, where $D^\gamma$ denotes the derivative with
respect to the first variable.
 \end{thm}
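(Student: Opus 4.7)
The plan is to mimic the proof of the elliptic version Theorem~\ref{thm:bound.epsilon} (namely Proposition~4.1 of \cite{HSV}), adapting it to the parabolic setting. The only structural differences are that the ambient space is now $\Omega_T\subset\RR^{n+1}$ with the parabolic metric and that the balls and the class $A_{p,\text{loc}}$ are parabolic; but the derivatives $D^\gamma$ still act only on the spatial variables, so the one-variable Sobolev interpolation toolbox applies inside each ball and the parabolic geometry enters only through the Whitney covering and the local weight theory.

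First, I would apply Lemma~\ref{lemm:covering.Omega} to $\Lambda=\Omega_T$ with a radius $r_0\simeq \epsilon$, obtaining a cover $\mathcal W_{r_0}=\{B_i\}$ of $\Omega_T$ by parabolic balls with $10 B_i\in\mathcal F_\beta$, bounded overlap $M$, and the crucial geometric feature that
\begin{equation*}
\delta(x)\simeq r_{B_i}/\epsilon \quad\text{on } B_i\in\tilde{\mathcal G}_{r_0}, \qquad \delta(x)\simeq 1 \quad\text{on } B_i\in\mathcal G_{r_0}.
\end{equation*}
Thus on each interior ball the radius and $\epsilon\,\delta$ are comparable, so multiplying a local interpolation estimate by $\delta^j$ will automatically produce the desired powers of $\epsilon$.

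Next, on each fixed ball $B_i$ I would establish the local weighted interpolation
\begin{equation*}
\|D^\gamma u\|_{L^p_w(B_i)}\leq C\bigl(r_{B_i}^{-j}\|u\|_{L^p_w(B_i)} + r_{B_i}^{k-j}\|D^k u\|_{L^p_w(B_i)}\bigr).
\end{equation*}
Since $10B_i\in\mathcal F_\beta$, the restricted weight $w\chi_{B_i}$ lies in the genuine Muckenhoupt class $A_p(B_i)$ (parabolic), with constant controlled uniformly by the $A_{p,\text{loc}}$ characteristic of $w$. Because $D^\gamma$ only differentiates in the spatial variables, the unweighted Gagliardo-Nirenberg inequality can be applied on each spatial slice of the parabolic cylinder underlying $B_i$ and integrated in $t$; the weighted version is then obtained either by a standard extrapolation argument or by representing the intermediate derivatives via spatial Riesz-potential-type kernels and applying Theorem~\ref{thm:acot.mxml.loc} for the boundedness of $M_{\mu,\beta}$ on $L^p_w$.

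Using $\delta^j\simeq(r_{B_i}/\epsilon)^j$ and $\delta^k\simeq(r_{B_i}/\epsilon)^k$ on $B_i\in\tilde{\mathcal G}_{r_0}$, multiplying the local inequality by $\delta^j$ and raising to the $p$-th power gives
\begin{equation*}
\|\delta^j D^\gamma u\|_{L^p_w(B_i)}^p \leq C\bigl(\epsilon^{-jp}\|u\|_{L^p_w(B_i)}^p + \epsilon^{(k-j)p}\|\delta^k D^k u\|_{L^p_w(B_i)}^p\bigr).
\end{equation*}
Summing over $i$ and invoking the finite-overlap property of $\mathcal W_{r_0}$ yields \eqref{interin}. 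The balls in $\mathcal G_{r_0}$, where $\delta\simeq 1$ and the radius is the fixed $r_0\simeq\epsilon$, are handled by the same computation, absorbing the bounded factor $\delta$ into the constant.

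The main obstacle is the passage from unweighted to weighted local interpolation with constants \emph{uniform} over the whole covering. This forces a careful use of the fact that $A_{p,\text{loc}}$ is $\beta$-independent, so that the $A_p(10B_i)$-characteristic of $w\chi_{10B_i}$ is bounded independently of $i$, coupled with Theorem~\ref{thm:acot.mxml.loc} to control the maximal operator encoding the interpolation. The parabolic nature of the balls means one must take care that the Gagliardo-Nirenberg step, which is intrinsically spatial, integrates cleanly against the $(n{+}2)$-dimensional Lebesgue measure on the parabolic cylinder; this is where the finite overlap and the $\beta$-free $A_p$-control are indispensable.
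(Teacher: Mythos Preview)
Your outline is plausible but organizes the argument quite differently from the paper, and the step you flag as the ``main obstacle'' is precisely where the paper takes a cleaner route. You propose an $\epsilon$-dependent Whitney covering together with a \emph{local weighted} interpolation inequality on each $B_i$, to be summed afterwards. The paper instead fixes the covering once (with $\beta=1/2$, $r_0<1/20$, independent of $\epsilon$), invokes the Sobolev integral representation of \cite{B} on spatial slices, averages in $t$ over $I_\sigma(t)$, and decomposes the resulting fractional integral dyadically to obtain a single \emph{pointwise} bound
\[
\delta(x)^j\,|D^\gamma u(x)|\leq C\bigl(\epsilon^{-j}M_{\beta,\text{loc}}u(x)+\epsilon^{k-j}\delta(x)^k M_{\beta,\text{loc}}(D^k u)(x)\bigr)
\]
valid a.e.\ on $\Omega_T$ (here $\epsilon$ enters through the choice $\sqrt{2}\sigma=\epsilon r_P$). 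The weighted $L^p$ estimate then follows from a single application of Theorem~\ref{thm:acot.mxml.loc}, using the key observation that $w\delta^s\in A_{p,\text{loc}}(\Omega_T)$ for every real $s$, since $\delta\simeq\delta(x_B)$ on balls of $\mathcal F_{1/2}$.

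Your scheme would work, but it requires a uniform weighted Gagliardo--Nirenberg inequality on parabolic balls that you do not prove; your second suggestion for obtaining it (spatial Riesz potentials plus Theorem~\ref{thm:acot.mxml.loc}) is essentially the paper's argument rephrased locally, so you would end up reproducing the same mechanism inside each $B_i$ and then summing, which is strictly more work. The paper's advantage is that the pointwise maximal bound globalizes immediately, bypassing both the $\epsilon$-dependent covering and the summation, and it replaces the delicate ``uniform local $A_p$'' bookkeeping by the single remark that multiplying $w$ by any power of $\delta$ preserves $A_{p,\text{loc}}$.
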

\begin{proof}
 The proof follows the same lines of the proof of Theorem \ref{thm:bound.epsilon} of \cite{HSV} with appropriate changes. We include it for completeness.
  We  consider the following Sobolev's integral representation (see \cite{B}):
\begin{equation*}
|D^\gamma v(x',s)|\leq C\bigg(\sigma^{-n-j}\int_{B(x',\sigma)}|v(y',s)|\,+\int_{B(x',\sigma)}\frac{|D^k v(y',s)|}{|x'-y'|^{n-k+j}}dy'\bigg),
\end{equation*}
for any $\sigma>0$,  $(x',s) \in\mathbb{R}^{n}\times(0,T)$ and $v\in W_{\text{loc}}^{k,1}(\mathbb{R}^{(n+1)})$.

   Let us choose a Whitney' type covering $\mathcal W_{r_0 }$ of $\Omega_T$ with $\beta=1/2$ and $r_0<1/20$. For $P=B(x_P,r_P)\in\mathcal W_{r_0 }$,
    take a $\mathcal{C}_0^\infty$ function $\eta_P$ such that $\hbox{supp}(\eta_P)\subset 4P\subset\Omega_T$, $0\leq\eta_P\leq 1$, and $\eta_P\equiv 1$ on $2P$.

   We apply now the above inequality to $u\eta_P$ which, by our assumptions, belongs to  $W_{\text{loc}}^{k,1}(\mathbb{R}^n)$.
   Observe that for $(x',s)\in P$ and $\sigma\leq r_P$ we have $B((x',s),\sigma)\subset 2P$ and consequently $u\eta_P$
   as well as its derivatives coincide with $u$ and its derivatives when integrated over such balls.

   Therefore for $(x',s)\in P$ and $\sigma\leq r_P$, we obtain the above inequality with $v$ replaced by $u$, namely
\begin{align}\label{pwinter}
| D^\gamma u(x',s)|
  &=| D^\gamma (u\eta_P)(x',s) |\\
  \notag
     &\leq C \sigma^{-n-j}\int_{B(x',\sigma)}|u(y',s)|dy'\,+C\int_{B(x',\sigma)}\frac{|D^k u(y',s)|}{|x'-y'|^{n-k+j}}dy' .
\end{align}

   Moreover, as is easy to check from the properties of the covering $\mathcal W_{r_0 }$, the balls $B(x,\sqrt{2}\sigma)$, for $x\in P$ and $\sqrt{2}\sigma\leq r_P$,
   belong to the family $\efe$ for $\beta=1/2$. In fact, for $x\in P$, since from properties 1 and 2 of Whitney's Lemma  we get $10 P\in \efe$, applying the Lemma \ref{lemm:tecnico} we get
\begin{equation*}
B(x,\sqrt{2}\sigma)\subset B(x,(10-\beta)\sqrt{2}\sigma)\subset B(x,(10-\beta)r_P)\in\efe.
\end{equation*}
 Let $x=(x',t)\in P.$ Integrating in \eqref{pwinter} over $I_\sigma(t)= (t-\sigma^2,t+\sigma^2)$ and noticing that $B(x',\sigma)\times I_\sigma(t)\subset B(x,\sqrt{2}\sigma)\in \efe, $
we get
\begin{align*}
\sigma^{-2}
  \!\! 
  \int_{I_\sigma(t)}
  &
   | D^\gamma u(x',s) | \,  ds \\
   &\leq C \sigma^{-n-2-j}   
      \!\!\!\!\!
     \iint \limits _{B(x',\sigma)\times I_\sigma(t)} 
     \!\!\!\!\!\!
       |u|(y',s) \,  dy'ds\,
   +C \sigma^{-2}
      \!\!\!\!\!
     \iint  \limits_{B(x',\sigma)\times I_\sigma(t)}
     \!\!\!\!\!\!
     \frac{|D^k u(y',s)|}{|x'-y'|^{n-k+j}}  \, dy'ds \\
        &\leq C \sigma^{-j}M_{\beta,\text{loc}}u(x',t)+ C\sigma^{-2}
        \!\!\!\!\!
           \iint \limits_{{B(x',\sigma)\times I_\sigma(t)}}
        \!\!\!\!\!\!   
             \frac{|D^k u(y',s)|}{|x'-y'|^{n-k+j}}dy'ds
\end{align*}
for all $x= (x',t)\in P$ and $\sqrt{2}\sigma\leq r_P$.

   As for the second term, splitting the integral dyadically, we obtain
that is bounded by
\begin{equation}\label{MaximalD}
\sigma^{k-j}\sum_{i=0}^\infty 2^{i(j-k)}\,\frac{1}{\sigma^2|2^{-i}B(x'\sigma) |}\int_{I_\sigma(t)}\int_{2^{-i}B(x',\sigma)}|D^k u(y',s)|dy'ds.
\end{equation}
Since for $x\in P$ and $\sqrt{2}\sigma\leq r_P$ all averages involved correspond to balls in $\mathcal{F}_{1/2}$ and $j<k$, the term in \eqref{MaximalD}
is bounded by a constant times $\sigma^{k-j}M_{\beta,\text{loc}}D^ku (x)$ for all $x\in P$.

Putting together both estimates and taking $\sqrt{2}\sigma=\varepsilon r_P$, using that $r_P\simeq \delta(x)$ for $x\in P$ and denoting
\begin{equation*}
M_{\text{loc}}^2f(x',t)
   =\sup_{\substack{s\in I_\sigma(t)\\   \sigma\leq r_P }}
   \frac{1}{\sigma^2}\int_{I_\sigma(t)}|f(x',s)|\; ds,
\end{equation*}
we obtain
\begin{align}\label{cotamloc}
|D^\gamma (u)(x',t)|
   &\leq C
M_{\text{loc}}^2 (D^\gamma u)(x',t) \\
 \notag  
 &  \leq C\big((\varepsilon\delta(x))^{-j} 
 M_{\beta,\text{loc}}(u)(x) 
   +  (\varepsilon \delta(x))^{k-j}M_{\beta,\text{loc}}(D^{k}u(x)\big)
\end{align}
for a.e. $ (x',t) \in P$. Since $\mathcal W_{r_0 }$ is a covering of $\Omega_T$  and the right hand side of \eqref{cotamloc} no longer depends of $P$, we obtain that
(\ref{cotamloc}) holds for a.e. $x=(x',t)\in\Omega_T$.

Multiplying both sides by $\delta^j(x)$ and taking the norm in $L^p_w(\Omega_T)$, we arrive to
\begin{equation*}
\|\delta^j\,D^\gamma u \|_ {L^p_w(\Omega_T)}\leq C\bigl(\varepsilon^{-j}\|M_{\beta,\text{loc}}u\|_{L_w^p(\Omega_T)}+
\varepsilon^{k-j}\|M_{\beta,\text{loc}}(D^{k} u)\|_{L^p_{w\delta^{kp}}(\Omega_T)}\bigr).
\end{equation*}
Next, we observe that if the weight $w$ belongs to $A_{p,\text{loc}}(\Omega_T)$ also does $w\delta^s$, for any real number $s$. In fact, for any ball $B$ in $\mathcal{F}_{1/2}$
we have that $\delta(x)\simeq\delta(x_B)$, for any $x\in B$ so that  \eqref{apbeta} holds provided it is satisfied by $w$.

Therefore, an application of the continuity results for $M_{\beta,\text{loc}}f$, given in Theorem \ref{thm:acot.mxml.loc}, leads to the interpolation inequality \eqref{interin}.

\end{proof}

   Next we state the parabolic version of Theorem \ref{thm:bound.D2u}.

   \begin{thm}[See \cite{BC} and \cite{PS}]
      \label{thm:bound.D2uP}
        Under assumptions (1) and (2), for any   $p\in(1,\infty)$ and
          $w\in A_{p,\text{loc}}(\Omega_T )$,
           there exist  $C$ and $r_0> 0$
           such that for any ball $B_{0}= B(z_0, r_0)$  in  $\Omega_T$ with $10 B_{0}\in\mathcal{F}_{\beta}$ and any  $u\in W^{2,p}_0(B_0)$ the following inequalities hold
		
    \begin{align*}
	 	\|u_{x_{i}x_{j}}\|_{L^{p}_{w}(B_{0})} 
	 	    & \leq C  \|A_{P}u\|_{L^{p}_{w}(B_{0})} , \\
	 	\|u_{t}\|_{L^{p}_{w}(B_{0})} 
	 	   & \leq C \|A_{P}u\|_{L^{p}_{w}(B_{0})}.
 	\end{align*}

   \end{thm}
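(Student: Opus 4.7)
The plan is to mirror the proof of Theorem \ref{thm:bound.D2u}, replacing the elliptic Chiarenza--Frasca--Longo machinery from \cite{CFL1} with its parabolic counterpart developed by Bramanti--Cerutti in \cite{BC}. The starting point is the parabolic representation formula obtained in \cite{BC}: after freezing the coefficients $a_{ij}$ at a fixed point of $B_0$ and expanding the fundamental solution of the constant-coefficient parabolic operator into spherical harmonics on the unit parabolic sphere, one expresses $u_{x_ix_j}$ and $u_t$ in terms of a principal value parabolic Calder\'on--Zygmund operator $T$ applied to $A_P u$, plus commutators of the form $[T, a_{ij}]$ applied to the second-order spatial derivatives $u_{x_kx_\ell}$, plus lower-order controllable remainders.

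Next I would localize everything to the ball $B_0 \subset \Omega_T$. Since $10 B_0 \in \mathcal{F}_\beta$, the restriction $(B_0, d_{\mathrm{parab}}, dx)$ is a space of homogeneous type in the sense of Coifman--Weiss, and the key observation (identical to the one used in Theorem \ref{thm:bound.D2u}) is that $w\chi_{B_0} \in A_p(B_0)$ whenever $w \in A_{p,\text{loc}}(\Omega_T)$: indeed, the $A_p$-test over parabolic balls contained in $B_0$ reduces to a test over balls in $\mathcal{F}_\beta$, because of the geometric control provided by Lemma \ref{lemm:tecnico}. Once this reduction is made, the weighted $L^p$-boundedness of $T$ and of the commutators $[T, a_{ij}]$ with symbols in $VMO \cap L^\infty$ on the homogeneous space $B_0$ is exactly the content of the weighted singular integral / commutator theory of \cite{PS}, which supplies the estimates
\begin{equation*}
\|T f\|_{L^p_w(B_0)} \leq C \|f\|_{L^p_w(B_0)}, \qquad \|[T,a_{ij}] f\|_{L^p_w(B_0)} \leq C \|a_{ij}\|_{BMO}\,\|f\|_{L^p_w(B_0)}.
\end{equation*}

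Plugging these into the representation formula and applying them to $f = A_P u$ (respectively to $f = u_{x_k x_\ell}$ for the commutator term), one obtains
\begin{equation*}
\sum_{i,j}\|u_{x_i x_j}\|_{L^p_w(B_0)} + \|u_t\|_{L^p_w(B_0)} \leq C \|A_P u\|_{L^p_w(B_0)} + C\,\eta(2r_0)\sum_{k,\ell}\|u_{x_k x_\ell}\|_{L^p_w(B_0)},
\end{equation*}
where $\eta$ is the $VMO$-modulus of the $a_{ij}$'s. Choosing $r_0$ small enough so that $C \eta(2r_0) < 1/2$ allows us to absorb the commutator term into the left-hand side, which yields both claimed inequalities simultaneously.

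The main obstacle is not the weighted passage (which is already abstracted in \cite{PS}) but verifying that the parabolic spherical-harmonic expansion in \cite{BC} produces kernels whose singular part is a parabolic Calder\'on--Zygmund kernel on the homogeneous space $(B_0, d_{\mathrm{parab}}, dx)$ and whose symbols lie in $VMO$ with the same modulus as the $a_{ij}$; once this is granted the argument is essentially a transcription of the proof of Theorem \ref{thm:bound.D2u}.
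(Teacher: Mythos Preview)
Your proposal is correct and follows essentially the same route as the paper's proof: freeze coefficients and invoke the Bramanti--Cerutti representation formula via spherical harmonics in $\RR^{n+1}$, reduce to the weighted $L^p$-boundedness of a parabolic Calder\'on--Zygmund operator and its commutator on the homogeneous space $(B_0, d_{\mathrm{parab}}, dx)$, check that $w\chi_{B_0}\in A_p(B_0)$, and then appeal to \cite{PS}. Your write-up is in fact more explicit than the paper's own sketch, spelling out the absorption step via the $VMO$-modulus that the paper leaves implicit in its reference to \cite{BC}.
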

\begin{proof} The proof is similar to the elliptic case, as is proved in Corollary 2.13 in \cite{BC}, by using again
 expansion into spherical harmonics on the unit sphere, this time  in $\RR^{n+1}$.  After that, all is reduced  to obtain $L^p$- boundedness of a parabolic
Calder\'{o}n-Zygmund operator $T$ and its conmutator on a ball $B$ contained in $\Omega_T$ (see Theorems 2.12 and the representation formula (1.4) in this paper).
We can look at the operator $T$ and its conmutator $[T,b]$ acting on functions defined over the space of homogeneous type $B$ equipped with the parabolic metric and
 the restriction of Lebesgue measure. As before, the weight $w\chi_{B}$ is in $A_p(B)$. By the weighted theory of singular integrals and conmutators  on  spaces of homogeneous type,
 (see again \cite{PS}), applied to our operators the result follows.
\end{proof}

 Now we focus our attention in the proofs of the main Theorem of this section, that is, the parabolic version of Theorem \ref{thm:potencial}.

 \begin{thm} \label{thm:potencialP}
 Let $a_{ij}\in VMO(\RR^{n+1}) $, for $i,j=1,\dots,n$, $V\in RH_q(\RR^{n}) $ with $1< p\leq q$, and $w\in A_{{{q-1}\over{q-p}}p,\text{loc}}(\Omega_T)$.
 Then there exist positive constants $C$ and $r_{0}$   such that for any
 ball $B_{0}= B(z_0, r_0)$  in  $\Omega_T$ with $10 B_{0}\in\mathcal{F}_{\beta}$ and any  $u\in C^{\infty}_{0}(B_0)$, we have that

\begin{equation*}
\|Vu\|_{{L^p_w}(B_0)}  \leq C\big\|Lu\|_{L^p_w(B_0)}.
\end{equation*}

\end{thm}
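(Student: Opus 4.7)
The plan is to mimic the elliptic argument used in the proof of Theorem \ref{thm:potencial}, replacing the elliptic ingredients by their parabolic counterparts: Kurata's bound \eqref{parabolicfundsolbound} for the frozen-coefficient fundamental solution, Lemma \ref{lemm:parabolickernelh1q} which furnishes the $H_1(q)$ condition for the parabolic kernel, and Theorem \ref{thm:bound.D2uP} which gives the weighted bound on $u_{x_ix_j}$ and $u_t$ in terms of $A_P u$. Concretely, for $z_0 \in \Omega_T$ and $x_0 \in B_0 = B(z_0, r_0)$ I freeze the top-order coefficients at $x_0$ to obtain $L_0 = A_{P,0} + V$ and, using Kurata's fundamental solution $\Gamma(x_0;x,y)$, I write
\begin{equation*}
u(x) = \int \Gamma(x;x,y) Lu(y)\,dy + \sum_{i,j} \int \Gamma(x;x,y) [a_{ij}(y)-a_{ij}(x)]\, u_{x_i x_j}(y)\,dy
\end{equation*}
after letting $x_0=x$. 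Multiplying by $V(x)$ and applying \eqref{parabolicfundsolbound} yields the pointwise bound
\begin{equation*}
|V(x)u(x)| \leq C_k\, S_k(|Lu|)(x) + \sum_{i,j} S_{k,a_{ij}}(|u_{x_i x_j}|)(x),
\end{equation*}
where $S_k$ and $S_{k,a}$ are now the parabolic analogues of \eqref{Sk}, \eqref{Ska}, with $V(x')$ and the parabolic distance $d(x,y)$ replacing their elliptic counterparts.

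The two main sub-steps are then: (i) prove that $S_k$ is bounded on $L^p_w(B_0)$ for $p \in (1,q]$ and $k$ large; and (ii) prove that given $\epsilon>0$ one can choose $r_0$ small (depending on the $VMO$-modulus of $a_{ij}$) so that $\|S_{k,a}f\|_{L^p_w(B_0)} \leq \epsilon \|f\|_{L^p_w(B_0)}$. For (ii), I pass to the adjoint $S^{\ast}_{k,a}$ and apply the abstract Theorem \ref{thm:abstracto} in the metric-measure space $(\RR^{n+1},d,dy)$ with $\Lambda = \Omega_T$; the key input is precisely Lemma \ref{lemm:parabolickernelh1q}, which says that the kernel of $S^{\ast}_k$ satisfies $H_1(q)$ in the first variable. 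The $VMO$ reduction then proceeds exactly as in the proof of Theorem \ref{thm:Ska.con.conmutador}: approximate $a_{ij}$ in $BMO$ by a bounded uniformly continuous $\phi$, replace $\phi$ by $\psi$ with small $BMO$ norm on $B_0$, and pick $r_0$ accordingly.

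The main obstacle is (i), namely establishing the pointwise majorization
\begin{equation*}
S^{\ast}_k f(x) \leq C\, M_{q',\text{loc}}(f)(x) = C\bigl(M_{\beta,\text{loc}}(|f|^{q'})(x)\bigr)^{1/q'}
\end{equation*}
for $x \in B_0$, from which the $L^p_w(B_0)$ bound follows by Theorem \ref{thm:acot.mxml.loc}, since $w \in A_{\frac{q-1}{q-p}p,\text{loc}}$ corresponds by duality to $v = w^{-1/(p-1)} \in A_{p'/q',\text{loc}}$. The subtlety, compared to the elliptic case, is that the potential $V$ depends only on the spatial variable $y'$, while the integration is performed with the parabolic measure on $\RR^{n+1}$. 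I would split $S^{\ast}_k f(x) = \mathbf{A}(x) + \mathbf{B}(x)$ according to whether $d(x,y) < \rho(x')$ or $d(x,y) \geq \rho(x')$, decompose the corresponding regions dyadically using the parabolic balls $B_j = B(x, 2^{\pm j}\rho(x'))$ contained (after comparison with $r_0$ as in the elliptic argument) in some dilation of $10B_0 \in \mathcal{F}_\beta$, and majorize via Hölder's inequality on each annulus:
\begin{equation*}
\int_{C_j} V(y') f(y) \,dy \leq |B_j|\Bigl(\tfrac{1}{|B_j|}\int_{B_j} V(y')^q\,dy\Bigr)^{1/q}\Bigl(\tfrac{1}{|B_j|}\int_{B_j}|f|^{q'}\,dy\Bigr)^{1/q'}.
\end{equation*}
Using Fubini in time, the parabolic volume $|B_j| \simeq (2^{\pm j}\rho(x'))^{n+2}$, and the reverse-Hölder condition on $V$ on the spatial slice $B'(x',2^{\pm j}\rho(x'))$, I bring in the spatial $V$-averages controlled by \eqref{13} and \eqref{14}, exactly as in the elliptic proof of Theorem \ref{thm:Sk.sin.conmutador}. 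The $RH_{q+\epsilon}$ improvement also handles the endpoint $p=q'$.

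Once (i) and (ii) are in place, combining the pointwise bound for $|Vu|$ with Theorem \ref{thm:bound.D2uP} applied to $u \in C_0^\infty(B_0)$ gives
\begin{equation*}
\|Vu\|_{L^p_w(B_0)} \leq C\|Lu\|_{L^p_w(B_0)} + C\epsilon\bigl(\|A_P u\|_{L^p_w(B_0)}\bigr) \leq (C+C\epsilon)\|Lu\|_{L^p_w(B_0)} + C\epsilon\|Vu\|_{L^p_w(B_0)},
\end{equation*}
and choosing $\epsilon$ small enough allows us to absorb $C\epsilon \|Vu\|_{L^p_w(B_0)}$ on the left, yielding the claimed inequality.
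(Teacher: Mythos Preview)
Your proposal is correct and follows essentially the same route as the paper: freeze the coefficients, invoke Kurata's bound \eqref{parabolicfundsolbound} to obtain the pointwise domination by the parabolic operators $S_k$ and $S_{k,a_{ij}}$, establish their $L^p_w(B_0)$ bounds by duality via the local maximal function and the abstract commutator Theorem \ref{thm:abstracto} (using Lemma \ref{lemm:parabolickernelh1q} for the $H_1(q)$ condition), and close with Theorem \ref{thm:bound.D2uP} and the absorption argument. The only cosmetic slip is writing $V(x)$ in the first displayed pointwise bound; in the parabolic setting this should be $V(x')$, as you correctly use thereafter.
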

\begin{proof}

   For $z_{0}=(z'_{0},\tau)\in\Omega_{T}$ pick a ball $B_{0}:=B(z_{0},r_{0})$ with $r_{0}$ to be chosen later. Again we let $x_{0}\in B_{0}$ and fix the coefficients $a_{ij}(x_{0})$ to obtain the operator

\begin{equation*}
L_{0}u=u_{t}-\sum _{i,j=1}^{n} a_{ij}(x_{0})u_{x_{i}}u_{x_{j}} + Vu = A_{0}u+Vu.
\end{equation*}

   From \cite{K} we know that the fundamental solution for this operator is bounded by the expression (see section \ref{prelim:fundamentalsolutions}):
\begin{align*}%\label{cota.GammaP}
|\Gamma(x_{0},x,y)|  &\leq C_{k}{1\over{\big(1+{{d(x,y)}\over{\rho(x')}}\big)^{k}}}  {1\over{d(x,y)^{n}}},
\end{align*}
for every $x=(x',t), y=(y',s)\in\Omega_{T}$, $t>s$, $k>0$, and for some constants $C_{k},C_{0}$ independent of $x_0$. Here again $\rho(x')$ is the critical radious.

   As usual, we defreeze the coefficients to obtain \ref{u.defreeze} and again the following pointwise bound holds for all $k\in\NN$, $x\in B_{0}$,

\begin{align}
\label{VuP}
|V(x')u(x)| 
  & \leq C_{k} V(x') \int  _{B_{0}} {1\over {\big( 1+{{d(x,y)}\over{\rho(x')}} \big)^{k} }} {1\over{d(x,y)^{n}}} \bigg( |Lu(y)|  +  \\
  \notag  
  &  \qquad\qquad\qquad\qquad
   + \sum _{i,j=1}^{n} |a_{ij}(y)-a_{ij}(x)|
 |u_{x_{i}x_{j}}(y)| \bigg) dy,
\end{align}
and rewrite (\ref{VuP}) as
\begin{align}
\label{9P}
|V(x')u(x)| \leq C_{k} S_{k}(|Lu|)(x)+\sum _{i,j=1}^{n} S_{k,a_{ij}}(|u_{x_{i}x_{j}}|)(x),
\end{align}
where $S_{k}$ and $S_{k,a}$ are the integral operators defined as
\begin{align*}
S_{k}f(x)  &= V(x') \int  {1\over {\big( 1+{{d(x,y)}\over{\rho(x')}} \big)^{k} }} {1\over{d(x,y)^{n}}} f(y) dy, \qquad \mbox{ and} \\
S_{k,a}f(x)& =  V(x') \int  {1\over {\big( 1+{{d(x,y)}\over{\rho(x')}} \big)^{k} }} {1\over{d(x,y)^{n}}} |a(y)-a(x)| f(y) dy,
\end{align*}
with $a\in L^{\infty}\cap VMO(\RR^{n})$, $k\in\NN$.

   Thus, as in the elliptic case, the  Theorem  follows from Theorem \ref{thm:bound.D2uP} and the next parabolic version of Theorems \ref{thm:Sk.sin.conmutador} and \ref{thm:Ska.con.conmutador}.

\end{proof}

   Now we need to prove the following parabolic version of Theorem \ref{thm:Sk.sin.conmutador}:

\begin{thm}
    \label{thm:Sk.sin.conmutadorP}
   Let $B_{0}$ be a ball in $\mathcal{F}_{\beta}$
  such that $10 B_{0}\in\mathcal{F}_{\beta}$.
  Then for $k$ large enough and $p \in [1,q]$,
  the operator $S_{k}$ is bounded on $L^{p}_{w}(B_{r_{0}})$, with $w\in A_{{{q-1}\over{q-p}}p ,\text{loc}}(\Omega_{T})$.
  \end{thm}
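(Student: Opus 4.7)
The strategy mirrors the one used in the elliptic Theorem \ref{thm:Sk.sin.conmutador}, with modifications needed to accommodate the parabolic metric and the fact that the potential $V=V(y')$ depends only on the spatial variable, while the integration takes place in $\RR^{n+1}$. First I would pass to the adjoint: since $\frac{p'}{q'}$ and $\frac{q-1}{q-p}p$ are conjugate exponents, it suffices to establish boundedness of
\begin{equation*}
S^\ast_k f(x)=\int \frac{V(y')}{\bigl(1+\tfrac{d(x,y)}{\rho(y')}\bigr)^k}\frac{1}{d(x,y)^n}\,f(y)\,dy
\end{equation*}
on $L^{p'}_v(B_0)$ for $v=w^{-1/(p-1)}\in A_{p'/q',\text{loc}}(\Omega_T)$, $p'\in[q',\infty]$. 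Using \eqref{rhoxrhoy} I would replace $\rho(y')$ by $\rho(x')$ in the kernel (at the expense of enlarging $k$), assume $f\ge 0$ and, since $V\in RH_q$ implies $V\in RH_{q+\varepsilon}$, assume $q>n/2$.

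The heart of the argument is the pointwise estimate
\begin{equation*}
S^\ast_k f(x)\leq C\bigl(M_{\beta,\text{loc}}(f^{q'})(x)\bigr)^{1/q'}=:CM_{q',\text{loc}}(f)(x)
\end{equation*}
for $x\in B_0$. Once this is proved, the case $p'>q'$ follows from Theorem \ref{thm:acot.mxml.loc}, and the borderline case $p'=q'$ is obtained by absorbing an $\varepsilon$ into the exponent via $V\in RH_{q+\varepsilon}$. To prove the pointwise bound I would split
\begin{equation*}
S^\ast_k f(x)\leq C\!\!\int_{d(x,y)<\rho(x')}\!\!\tfrac{V(y')}{d(x,y)^n}\chi_{B_0}f\,dy+C\!\!\int_{d(x,y)\ge\rho(x')}\!\!\Bigl(\tfrac{\rho(x')}{d(x,y)}\Bigr)^k\tfrac{V(y')}{d(x,y)^n}\chi_{B_0}f\,dy=\mathbf{A}(x)+\mathbf{B}(x),
\end{equation*}
and for each piece perform the dyadic decomposition into parabolic annuli $C_j$ inside parabolic balls $B_j=B(x,2^{\mp j}\rho(x'))$ of volume $\simeq(2^{\mp j}\rho(x'))^{n+2}$.

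For $\mathbf{A}$, after applying H\"older's inequality with exponents $q$ and $q'$ on each $C_j$, the main issue is that $RH_q$ is a statement about spatial balls. I would therefore enclose $B_j$ in the cylinder $B'_j\times I_j$ with $B'_j=B(x',2^{-j}\rho(x'))\subset\RR^n$ and $|I_j|\simeq(2^{-j}\rho(x'))^2$, exactly as in the proof of Lemma \ref{lemm:parabolickernelh1q}. Then
\begin{equation*}
\Bigl(\int_{B_j}V(y')^q\,dy\Bigr)^{1/q}\leq C(2^{-j}\rho(x'))^{(n+2)/q}\,\frac{1}{|B'_j|}\int_{B'_j}V(y')\,dy',
\end{equation*}
using the reverse H\"older condition on the $n$-dimensional ball $B'_j$. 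Pairing this with the parabolic bound $|B_j|^{1/q'}M_{q',\text{loc}}(f)(x)$ for the $f$-factor and the prefactor $(2^{-j}\rho(x'))^{-n}$ leaves $(2^{-j}\rho(x'))^2\cdot\frac{1}{|B'_j|}\int_{B'_j}V$. Applying \eqref{13} with $R=\rho(x')$ and $r=2^{-j}\rho(x')$, and then \eqref{14}, produces a geometric series $\sum 2^{j(n/q-2)}$, which converges under $q>n/2$. The case $\rho(x')>r_0$ (respectively $\rho(x')\le 2r_0$ for $\mathbf{B}$) is handled exactly as in the elliptic proof: one shows that $B_0\cap C_j=\emptyset$ for $j\le j_0-2$ (resp.~$j\ge j_0+2$) using the triangle inequality, and Lemma \ref{lemm:tecnico} guarantees that the remaining balls $B_j$ lie in $\mathcal{F}_\beta$, so the local maximal function controls the mean. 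The treatment of $\mathbf{B}$ is analogous, with the exponential decay factor $2^{-jk}$ absorbing the growth in \eqref{13} for $k$ large.

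The main obstacle is bookkeeping rather than conceptual: the $RH_q$ hypothesis lives in $\RR^n$ while the parabolic kernel and maximal function live in $\RR^{n+1}$, and this asymmetry is resolved by the cylinder-enclosure trick described above. Once the enclosure and the substitution $\rho(y')\rightsquigarrow\rho(x')$ are in place, the parabolic powers of the radii distribute exactly so that the exponents match as in the elliptic case, and one concludes with $M_{q',\text{loc}}$ and Theorem \ref{thm:acot.mxml.loc}.
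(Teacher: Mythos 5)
Your proposal is correct and follows essentially the same route as the paper: duality to $S^\ast_k$, the pointwise bound by $M_{q',\text{loc}}$, the dyadic split into $\mathbf{A}$ and $\mathbf{B}$ by $d(x,y)\lessgtr\rho(x')$, the enclosure of the parabolic balls $B_j$ inside cylinders $B'_j\times I_j$ so that $RH_q$ can be applied to the spatial balls $B'_j$, and the use of \eqref{13}, \eqref{14} and Lemma \ref{lemm:tecnico} to conclude. Your exponent bookkeeping (including the $(2^{-j}\rho)^{(n+2)/q}$ factor and the convergence of $\sum 2^{j(n/q-2)}$ for $q>n/2$) matches the paper's computation.
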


\begin{proof}
   This proof is also done by duality. The remarks we made along the proof of Theorem \ref{thm:Sk.sin.conmutador} also hold this time so we won't mention them.

   The adjoint operator of $S_{k}$ is
\begin{align*}
S^{\ast}_{k}f(x)&= \int  {{V(y')}\over{\big( 1+ {{d(x,y)}\over{\rho(y')}}\big)^{k}}} {1\over{d(x,y)^{n}}} f(y)dy, \qquad x\in\Omega_{T}.
\end{align*}

   Just like before we can split
\begin{align*}
S^{\ast}_{k}f(x) & \leq  C \int _{d(x,y)<\rho(x')} {1\over{d(x,y)^{n}}}  V(y') \chi_{B_{0}}(y)f(y) dy \, + \\
& \qquad \qquad 
    + C \int _{d(x,y)\ge\rho(x')} \Big({{\rho(x')}\over{d(x,y)}} \Big)^{k} {1\over{d(x,y)^{n}}} V(y') \chi_{B_{0}}(y)f(y) dy \\
    & = \mathbf{A}(x)+\mathbf{B}(x).
\end{align*}
We will prove the pointwise  {bound}
\begin{equation*}
S^{\ast}_{k}f(x)\leq C M_{q',\text{loc}}(f)(x).
\end{equation*}

   In order to study $\mathbf{A}(x)$, let $x\in B_{0}=B(z_{0},r_{0})$. Denote by $B_{j}$ the balls $B_{j}=B(x,2^{-j}\rho(x'))$, by $C_{j}$ the annuli defined as $C_{j}=\{ y: 2^{-(j+1)}\rho(x')<d(x,y)\leq 2^{-j}\rho(x) \} = \overline{B_{j}}\backslash\overline{B_{j+1}}$, and by $R_{j}$ the rectangles $R_{j}=B'_{j}\times I_{j}$ where $B'_{j}$ denotes the ball in $\RR^{n}$, $B'_{j}=B(x',2^{-j}\rho(x'))$ and $I_{j}$ denotes the real ball $I_{j}=B(t,(2^{-j}\rho(x')^{2})$, $j\in\NN_{0}$. We have that $C_{j}\subset B_{j} \subset R_{j}$, and let us remark that the ball measures are $|B_{j}|= c_n(2^{-j}\rho(x))^{n+2}$ and $|B'_{j}|=C_n(2^{-j}\rho(x'))^{n}$. The same steps as before prove that
\begin{equation*}
\mathbf{A}(x)\leq C M_{q',\text{loc}}(f)(x),
\end{equation*}
  for $x\in B_{0}$, $f\in L^{p}_{w}(B_{0})$ and $f\ge 0$, where $M_{q',\text{loc}}$ denotes the local maximal function of exponent $q'$, in the parabolic setting. Indeed, if $\rho(x')\leq r_{0}$ we have that
\begin{align*}
\mathbf{A}(x)& \leq  C \sum _{j=0}^{\infty} {{|B_{j}|}\over{(2^{-j}\rho(x'))^{n}}} \bigg( {1\over{|B'_{j}|}} \int _{B'_{j}} V(y')^{q}dy' \bigg)^{1\over q} \bigg( {1\over{|B_{j}|}} \int _{B_{j}} f(y)^{q'}dy \bigg)^{1\over{{q'}}}  \\
&\leq  C M_{q',\text{loc}}(f)(x) \sum _{j=0}^{\infty} (2^{-j}\rho(x'))^{2} \bigg({1\over{|B'_{j}|}}\int _{B'_{j}} V(y')dy'\bigg),
\end{align*}
because of the H\"{o}lder inequality, the reverse H\"{o}lder condition $V$ and the definition of local maximal function. And in the case $\rho(x') >r_{0}$, again there exists $j_{0}\in\NN_{0}$ such that $C_{j}\cap B_{0}=\emptyset$ for $j\leq j_{0}+2$. The same steps as before show us that
\begin{equation*}
\mathbf{A}(x)
  \leq C M_{q',\text{loc}}(f)(x) \sum _{j=j_{0}-1}^{\infty} (2^{-j}\rho(x'))^{2} \bigg({1\over{|B'_{j}|}}\int _{B'_{j}} V(y')dy'\bigg).
\end{equation*}
Now we use again equations \eqref{13} and \eqref{14} to conclude that $\mathbf{A}(x)\leq C M_{q',\text{loc}}(f)(x)$.

   To study $\mathbf{B}(x)$, we consider the balls $B_{j}=B(x,2^{j}\rho(x'))$, the annuli $C_{j}=\{y:2^{j}\rho(x')<d(x,y)\leq 2^{j+1}\rho(x')\}$, and the rectangles $R_{j}=B'_{j}\times I_{j}=B(x',2^{j}\rho(x')) \times B(t,(2^{j}\rho(x'))^{2}) \subset \RR^{n}\times \RR$, for $j\in\NN_{0}$. We have that $C_{j}\subset B_{j} \subset R_{j}$. Observe that if $\rho(x')>2r_{0}$, then $\mathbf{B}(x)=0$, thus we consider only the case $\rho(x')\leq 2r_{0}$. There exists $j_{0}\in\NN_{0}$ such that $C_{j}\cap B_{0}=\emptyset$ if $j\ge j_{0}+2$. Thus we have that
\begin{align*}
\mathbf{B}(x)& \leq  C M_{q',\text{loc}}(f)(x) \sum _{j=0}^{j_{0}+1} {{(2^{j}\rho(x'))^{2}}\over{2^{jk}}} \bigg({1\over{|B'_{j}|}}\int _{B'_{j}} V(y')dy'\bigg),
\end{align*}
because of the use of H\"{o}lder inequality, the reverse H\"{o}lder conditionon $V$ and the definition of local maximal function of the order $q'$. Thus, using again equations \ref{13} and \ref{14}, $\mathbf{B}(x)\leq C M_{q',\text{loc}}(f)(x)$.

\end{proof}

\begin{Rema}\label{thm:Sk.sin.conmutadorPsin pesos}
We note that arguing in a similar way as in the proof of Theorem \ref{thm:Sk.sin.conmutadorP} it can be show that the operator $S_{k}$ is bounded on
$L^{p}(\RR^{(n+1)})$ with $w = 1$ and $p\in [1,q]$. In this case the operator is pointwisely
bounded by the maximal Hardy-Littlewood function of order $q'$.
\end{Rema}
   We turn now to the proof of parabolic Theorem \ref{thm:Ska.con.conmutador}:
\begin{thm}
    \label{thm:Ska.con.conmutadorP}
   Let $p \in (1,q]$ and $w\in A_{{{q-1}\over{q-p}}p,\text{loc}}(\Omega_T)$. Then, given $\epsilon >0$ there exist $r_{0}>0$, depending on the $VMO-$modulus of $a$
   such that for any ball $B_{0}= B(z_0, r_0)$  in  $\Omega_T$ with $10 B_{0}\in\mathcal{F}_{\beta}$ , the inequality
\begin{align}
\|S_{k,a}f\|_{L^{p}_{w}(B_{0})} \leq \epsilon \|f\|_{L^{p}_{w}(B_{0})}.
\end{align}
holds for all $f\in L^{p}_{w}(B_{0})$ and $k$ large enough.
\end{thm}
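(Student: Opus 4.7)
The plan is to mirror the elliptic argument of Theorem \ref{thm:Ska.con.conmutador} step by step, using the parabolic ingredients already assembled in the paper. First, by duality it is enough to establish the estimate for the adjoint operator
\begin{equation*}
S^{\ast}_{k,a}f(x)=\int |a(x)-a(y)|\,W(x,y)\,f(y)\,dy,
\end{equation*}
on $L^{p'}_{v}(B_{0})$ with $v=w^{-1/(p-1)}\in A_{p'/q',\text{loc}}(\Omega_{T})$ and $p'\in[q',\infty)$, where
\begin{equation*}
W(x,y)=\frac{V(y')}{\bigl(1+\tfrac{d(x,y)}{\rho(y')}\bigr)^{k}}\frac{1}{d(x,y)^{n}}
\end{equation*}
is exactly the parabolic kernel considered in Lemma \ref{lemm:parabolickernelh1q}. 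That lemma already guarantees that $W$ satisfies the $H_{1}(q)$ condition in the first variable for $k$ large enough, so the structural hypothesis of Theorem \ref{thm:abstracto} is in place.

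Next, I would apply Theorem \ref{thm:Sk.sin.conmutadorP} to conclude that the (non-commutator) operator $Tf(x)=\int W(x,y)f(y)\,dy$ is bounded on $L^{p'}_{v}(B_{0})$ for every $p'\in(q',\infty)$ and $v\in A_{p'/q',\text{loc}}(\Omega_{T})$. With this, the abstract Theorem \ref{thm:abstracto}, applied with $b=a\in BMO(\mathbb{R}^{n+1})$, yields
\begin{equation*}
\|S^{\ast}_{k,a}f\|_{L^{p'}_{v}(B_{0})}\leq C\|a\|_{BMO}\|f\|_{L^{p'}_{v}(B_{0})}
\end{equation*}
in the range $q'<p'<\infty$. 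The endpoint $p'=q'$ is then absorbed by replacing the exponent $q$ by the slightly better exponent $q+\epsilon$ supplied by the self-improvement of the reverse H\"older class, exactly as in the elliptic argument.

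Finally, to promote the norm inequality to the $\epsilon$-small version, I would invoke the $VMO$ character of $a$: choose a bounded uniformly continuous $\phi$ with $\|a-\phi\|_{BMO}<\epsilon$, and then a uniformly continuous $\psi$ that coincides with $\phi$ on $B_{0}$ and satisfies $\|\psi\|_{BMO}\leq \omega_{\phi}(2r_{0})$. Splitting
\begin{equation*}
S^{\ast}_{k,a}f=S^{\ast}_{k,a-\phi}f+S^{\ast}_{k,\phi}f=S^{\ast}_{k,a-\phi}f+S^{\ast}_{k,\psi}f
\end{equation*}
and applying the previous step to each piece, one gets
\begin{equation*}
\|S^{\ast}_{k,a}f\|_{L^{p'}_{v}(B_{0})}\leq C\bigl(\|a-\phi\|_{BMO}+\omega_{\phi}(2r_{0})\bigr)\|f\|_{L^{p'}_{v}(B_{0})}\leq C\epsilon\|f\|_{L^{p'}_{v}(B_{0})},
\end{equation*}
provided $r_{0}$ is small enough depending on the $VMO$-modulus of $a$. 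Undoing the duality gives the stated bound for $S_{k,a}$.

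The main obstacle is really hidden in the verification that the parabolic analogue of Theorem \ref{thm:abstracto} applies as claimed: concretely, that the Fefferman--Stein type inequality of Lemma \ref{lemm:FS-PS.tipo.homog}, the interplay between the local sharp maximal function on $B_{0}$ and the parabolic $H_{1}(q)$ condition, and the $A_{\infty,\text{loc}}$ properties of $w$ transfer verbatim when the underlying space of homogeneous type is $(\mathbb{R}^{n+1},d,dx)$ with the parabolic metric. Once those ingredients are in place, the parabolic proof becomes a near transcription of the elliptic one, with $|x-y|^{n-2}$ replaced by $d(x,y)^{n}$ throughout and Lemma \ref{lemm:parabolickernelh1q} used in lieu of Proposition 12 of \cite{BBHV}.
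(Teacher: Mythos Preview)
Your proposal is correct and follows exactly the route of the paper: duality to the adjoint, Lemma \ref{lemm:parabolickernelh1q} for the $H_{1}(q)$ condition, Theorem \ref{thm:Sk.sin.conmutadorP} for the boundedness of the uncommuted operator, Theorem \ref{thm:abstracto} for the commutator bound, and then the $VMO$ splitting to extract the small constant. Your closing concern is not a genuine obstacle, since Theorem \ref{thm:abstracto} is already stated for $\Lambda=\Omega_{T}$ and $b\in BMO(\mathbb{R}^{n+1})$, and its proof is written at the level of a space of homogeneous type, so no separate parabolic verification is needed.
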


\begin{proof}
  This proof is also done by duality as in the proof of Theorem \ref{thm:Ska.con.conmutador}, and follows by Theorem \ref{thm:abstracto} with $\Lambda= \Omega_T$ and $b\in BMO(\RR^{n+1})$.

Here,
\begin{align*}
S^{\ast}_{k,a}f(x) &=  \int {{V(y')}\over{\big( 1+ {{d(x,y)}\over{\rho(y')}}\big)^{k}}} {1\over{d(x,y)^{n}}} |a(y)-a(x)| f(y)dy,
\end{align*}
for each positive integer $k$ and $a\in VMO$; and the kernel is
\begin{equation*}
w(x,y)={1\over {\big( 1+{{d(x,y)}\over{\rho(x')}} \big)^{k} }} {1\over{d(x,y)^{n}}},
\end{equation*}
 which satisfies the $H_{1}(q)$ condition as shown in section \ref{prelim:lemmas} (Lemma\ref{lemm:parabolickernelh1q})

\end{proof}

\section{Proof of the Main Result}
\label{mains}

We are in position to proof Theorem \ref{thm:principal}.

\begin{proof}[Proof of Theorem \ref{thm:principal}]
  Let   $\mathcal W_{r_{0}}=\{B_{i}=B(x_{i},r_{i})\}$ be a covering as in Lemma \ref{lemm:covering.Omega}, with $r_{0}$ as in Theorems \ref{thm:bound.D2u} and \ref{thm:potencial}
       and $0<r_0 < \beta /10$. For each $B_i\in  W_{r_0 }$ we consider a function $\eta_{i}$ such that the family $\{\eta_i\}_{i=1}^\infty$ satisfies

 \begin{enumerate}
           \item $\eta_i\in\mathcal C_0^\infty(2 B(x_i,r_i))$, $\eta_i\equiv 1$ in $B_i$,
           \item $\|\eta_i\|_\infty\leq 1$, $\|D^\alpha \eta_i\|_\infty\leq Cr_i^{-|\alpha |}$ where $ r_i\approx d(x_i, \partial\Omega )$ if $B(x_i,r_i)\in \tilde{\mathcal G}_{r_0 }$ and $  r_i \approx 1$ when $B(x_i,r_i)\in \mathcal G_{r_0 }$,
           \item $\sum _{i=1}^\infty \chi_{2B_i}(x)\leq M$.
\end{enumerate}
 By using Theorem \ref{thm:bound.D2u}, for each $i$, we get
\begin{align*}
  \| \chi_{B_i} D^2    &    (u\eta_{i})  \|^p_{L^p_w(2B_i)}\\
         & \leq C \| A(u\eta_i)\|^p_{L^p_w(2 B_i)}\\
         & \leq C \big(\| Au\|^p_{L^p_w(2B_i)}
            + r_i^{-1}\| Du\|^p_{L^p_w(2B_i)}
            + r_i^{-2}\|u\|^p_{L^p_w(B_i)} \big)^{p}\\
         & \leq C \big(\| Au\|_{L^p_w(2B_i )}
                + r_i^{-1}\| Du\|_{L^p_w(2B_i  )}
                +  r_i^{-2}\|u\|_{L^p(2B_i )}\big)^{p} \\
         & \leq C\big(\| Lu\|_{L^p_w( 2B_i)}
              +\| Vu\|_{L^p_w(2B_i )}
              + r_i^{-1}\|Du\|_{L^p_w(2B_i )}
              + r_i^{-2}\|u\|_{L^p_w(2B_i )}\big)^p.
  \end{align*}

Analogously, using this time Theorem \ref{thm:potencial}, since $w\in A_{p,\text{loc}}(\Omega )\subset A_{\frac{q-1}{q-p}p,\text{loc}}(\Omega) $  we obtain
\begin{align*}
\| \chi_{B_i} V (u\eta_{i})\|^p_{L^p_w(2B_i )}
    &\leq C\|L(u\eta_{i})\|^p_{L^p_w(2B_i)} \\
    &\leq C \|Lu\|^p_{L^p_w(B_i)}+r_i^{-1} \|Du\|^p_{L^p_w(B_i)}+r_i^{-2} \|u\|^p_{L^p_w(B_i)}.
 \end{align*}

Now, we note that for $x\in B_i$ the function $\eta_i u$ coincides with $u$, and also for $x\in 2B_i$, we have
$\delta (x_i)\approx r_i $ with  $\delta (x_i)=\min\{1,d(x_i,\boundary \Omega )\}$. Hence, putting together both estimates, multiplying both sides  by $\delta^2$, adding over $i$, using de finite overlapping property of the covering $\{2B_i \}$ and taking the $1/p$-th power, we arrive to

 \begin{align*}
     \|u\|_{W^{2,p}_{\delta ,w}(\Omega )}+\|\delta ^2Vu\|_{L^p_w(\Omega )}
     \leq C(\|\delta ^2 Lu\|_{L^p_w(\Omega )} + \|\delta Du\|_{L^p_w(\Omega  )} + \|u\|_{L^p_w(\Omega )})
     .\\
    \intertext{Using the interpolation Theorem \ref{thm:bound.epsilon}}
       \leq C(\|\delta ^2 Lu\|_{L^p_w(\Omega  )} + \epsilon \|\delta^2 D^2 u\|_{L^p_w(\Omega )})+(C+\epsilon^{-1}) \|u\|_{L^p_w(\Omega  )}.
    \end{align*}

  Finally, choosing $\epsilon $ such that $C\epsilon = 1/2$ and subtracting the term  $\|\delta ^2 D^2 u\|_{L^p_w(\Omega )}$, it follows
    \begin{equation*}
      \|u\|_{W^{2,p}_{\delta ,w}(\Omega  )}
                  \leq C \{\|Lu\|_{L^p_w(\Omega  )}+\|u\|_{L^p_w(\Omega )}\},
    \end{equation*}
whence the desired estimate follows.

  \end{proof}

   The proof of Theorem \ref{thm:principalP} is obtained by a few changes:

\begin{proof}[Proof of Theorem \ref{thm:principalP}]
  Just like in the previous proof, from Lemma \ref{lemm:covering.Omega} applying this time to $\Gamma= \Omega_T$, we consider a covering $\mathcal{W}_{r_{0}}$ and a family $\{\eta_{i}\}$ which satisfies 1 and 3, and the following 2: $\|\eta_{i}\|_{\infty}\leq 1$,
  \begin{align*} \|D^{\alpha}_{x}\eta_{i}\|_{\infty} \leq C r_{i}^{-|\alpha|}, \\ \|D_{t}\eta_{i}\|_{\infty}\leq C r_{i}^{-2},
 \end{align*}
 where $ r_i\approx d(x_i, \partial\Omega )$ if $B(x_i,r_i)\in \tilde{\mathcal G}_{r_0 }$ and $  r_i \approx 1$ when $B(x_i,r_i)\in \mathcal G_{r_0 }$.

  Now for each $i$ we use  theorems 
  \ref{thm:bound.D2uP} and \ref{thm:potencialP} to get
\begin{align*}
\|\chi _{B_{i}}D_{x}^{2}(u\eta_{i})\|_{L_{w}^{p}(2B_{i})} 
& \leq C \big( \| Lu\|_{L^p_w( 2B_i)}
    +\| Vu\|_{L^p_w(2B_i )}  + \\
    & \qquad\qquad\qquad
    + r_i^{-1}\|Du\|_{L^p_w(2B_i )}
    + r_i^{-2}\|u\|_{L^p_w(2B_i )}\big), \\
\|\chi _{B_{i}}D_{t}(u\eta_{i})\|^{p}_{L_{w}^{p}(2B_{i})}  
    &\leq C \big( \|Lu\|_{L^p_w( 2B_i)}
       +\| Vu\|_{L^p_w(2B_i )} + \\
     & \qquad\qquad\qquad
     + r_i^{-1}\|Du\|_{L^p_w(2B_i )}+ r_i^{-2}\|u\|_{L^p_w(2B_i )} \big), \\
\|\chi _{B_{i}}Vu\eta_{i}\|^{p}_{L_{w}^{p}(2B_{i})} & \leq C \big( \|Lu\|_{L_{w}^{p}(2B_{i})} + r_{i}^{-1}\|D_{x}u\|_{L_{w}^{p}(2B_{i})}+ r_{i}^{-2}\|u\|_{L_{w}^{p}(2B_{i})}\big),
\end{align*}
then, by performing analogous operations to the previous Theorem, we obtain
			\begin{align*}
			\|u\|_{W^{2,p}_{\delta ,w} (\Omega_{T})} + \|\delta^{2}Vu\|_{L^{p}_{w}(\Omega_{T})}  &\leq C\big( \|\delta^{2}Lu\|_{L^{p}_{w}(\Omega_{T})} + \|\delta D_{x}u\|_{L^{p}_{w}(\Omega_{T})} +\|u\|_{L^{p}_{w}(\Omega_{T})} \big).
			\end{align*}
From the interpolation Theorem \ref{thm:bound.epsilonP} we have that
			\begin{align*}
						\|\delta D_{x}u\|_{L^{p}_{w}(\Omega_{T})}
					 & \leq C \big( \epsilon^{-1}\|u\|_{L^{p}_{w}(\Omega_{T})} + \epsilon \|\delta^{2}D^{2}_{x}u\|_{L^{p}_{w}(\Omega_{T})} \big),
			\end{align*}
which finally leads us to
			\begin{align*}
					\|u\|_{W^{2,p}_{\delta ,w} (\Omega_{T})} + \|\delta^{2}Vu\|_{L^{p}_{w}(\Omega_{T})} & \leq C\big( \|\delta ^2 Lu\|_{L^p_w(\Omega_{T} )}+\|u\|_{L^p_w(\Omega_{T} )}\big)
			\end{align*}
as we desired.
\end{proof}

\end{document}